\documentclass[reqno]{amsart}
\usepackage{amssymb,latexsym,amsmath,amsthm,enumerate}
\usepackage[mathscr]{eucal}
\usepackage{framed,color,graphicx}
\usepackage{mathrsfs}
\usepackage[all]{xy}
\usepackage{tikz}
\usepackage{cite}

\usepackage{longtable}
\usepackage{ltxtable}
\usepackage{subcaption}
\usepackage{graphicx}
\usepackage{subfloat}
\usepackage{subfig}

\usepackage{subcaption}

\makeatletter
\@namedef{subjclassname@2020}{\textup{2020} Mathematics Subject Classification}
\makeatother

\usetikzlibrary{positioning}
\tikzset{%
element/.style={draw, shape=circle, fill=white, inner sep=1.4pt}
}
\DeclareSymbolFont{bbold}{U}{bbold}{m}{n}
\DeclareSymbolFontAlphabet{\mathbbold}{bbold}

\theoremstyle{plain}
\newtheorem{thm}{Theorem}[section]
\newtheorem{lem}[thm]{Lemma}
\newtheorem{cor}[thm]{Corollary}
\newtheorem{pro}[thm]{Proposition}

\newtheorem{problem}[thm]{Problem}

\newtheorem{claim}{Claim}[section]

\theoremstyle{definition}

\newtheorem{remark}[thm]{Remark}

\newcommand{\up}[1]{\textup{#1}}

\newcommand{\bp}{\mathbf{p}}
\newcommand{\bq}{\mathbf{q}}
\newcommand{\br}{\mathbf{r}}
\newcommand{\bs}{\mathbf{s}}
\newcommand{\bt}{\mathbf{t}}
\newcommand{\bu}{\mathbf{u}}
\newcommand{\bv}{\mathbf{v}}
\newcommand{\bw}{\mathbf{w}}

\begin{document}
\title[Power semirings of $S_7$]
{Explicit equational bases for the power semirings of $S_7$}

\author{Mengya Yue}
\address{School of Mathematics, Northwest University, Xi'an, 710127, Shaanxi, P.R. China}
\email{myayue@yeah.net}

\author{Miaomiao Ren}
\address{School of Mathematics, Northwest University, Xi'an, 710127, Shaanxi, P.R. China}
\email{miaomiaoren@yeah.net}

\author{Zidong Gao}
\address{School of Mathematics, Northwest University, Xi'an, 710127, Shaanxi, P.R. China}
\email{zidonggao@yeah.net}

\subjclass[2020]{16Y60, 03C05, 08B05, 08B15}
\keywords{Additively idempotent emiring, Variety, Identity, Finite basis problem}
\thanks{Miaomiao Ren, corresponding author, is supported by National Natural Science Foundation of China (12371024, 12571020).
Mengya Yue is supported by the Research Innovation Project for Postgraduates of Northwest University (CX2026052).
}

\begin{abstract}
For every semigroup $S$, the set $\mathcal{P}(S)$ of all subsets of $S$ and the set $\mathcal{P}^{+}(S)$
of all nonempty subsets of $S$ form additively idempotent semirings under set-theoretic union and
elementwise multiplication, called the full and nonempty power semirings of $S$, respectively.
We investigate the finite basis problem for the full and nonempty power semirings $\mathcal{P}(S_7)$ and $\mathcal{P}^{+}(S_7)$ of the multiplicative reduct of $S_7$,
where $S_7$ is the unique nonfinitely based three-element additively idempotent semiring.
We provide explicit infinite equational bases for both and prove that they are nonfinitely based.
For $\mathcal{P}^{+}(S_7)$, we establish a new sufficient condition for an additively idempotent semiring to be
nonfinitely based and apply it to obtain the required result.
Moreover, we show that the interval
$[\mathsf{V}(\mathcal{P}^{+}(S_7)), \mathsf{V}(\mathcal{P}(S_7))]$
in the lattice of additively idempotent semiring varieties has the cardinality of the continuum.
\end{abstract}

\maketitle

\section{Introduction}
An additively idempotent semiring (or ai-semiring for short) is an
algebra $(S,+,\cdot)$ such that the additive reduct $(S,+)$ is a
commutative idempotent semigroup, the multiplicative reduct
$(S,\cdot)$ is a semigroup, and multiplication distributes over
addition from both sides:
\[
x(y+z)\approx xy+xz
\quad\text{and}\quad
(x+y)z\approx xz+yz.
\]
An ai-semiring whose multiplicative reduct is commutative is called a \emph{commutative ai-semiring}.

The class of ai-semirings includes
well-known examples such as the Kleene semiring of regular languages~\cite{con},
the max-plus algebra~\cite{aei}, the power semiring of a semigroup~\cite{dgv24},
the endomorphism semiring of a semilattice~\cite{dgv25},
the semiring of all binary relations on a set \cite{dolinka}, and distributive lattices~\cite{bs}.
These and other similar algebras have significant applications in various fields,
including algebraic geometry~\cite{cc}, tropical algebraic geometry~\cite{ms},
theoretical computer science~\cite{go}, information science~\cite{gl}, and the theory of weighted automata~\cite{dkv}.

A class of ai-semirings is a \emph{variety} if it is closed under taking subalgebras,
homomorphic images, and arbitrary direct products.
By Birkhoff's theorem, a class of ai-semirings is a variety if and only if
it is an equational class, that is, the class of all ai-semirings satisfying a certain set of identities.
Let $\mathcal{V}$ be an ai-semiring variety.
If $\Sigma$ is a set of identities that defines $\mathcal{V}$,
then $\Sigma$ is an \emph{equational basis} of $\mathcal{V}$.
The variety $\mathcal{V}$ is \emph{finitely based} if it admits a finite equational basis;
otherwise, it is \emph{nonfinitely based}.
A variety is \emph{locally finite} if every finitely generated
algebra in it is finite. A finite algebra $A$ is
\emph{inherently nonfinitely based} if it is not contained
in any finitely based locally finite variety.
A finite ai-semiring is \emph{strongly nonfinitely based}
if every finite ai-semiring whose generated variety contains it
is nonfinitely based.

For an ai-semiring $S$, let $\mathsf{V}(S)$ denote the variety generated by $S$,
that is, the smallest variety containing $S$.
Then $S$ and $\mathsf{V}(S)$ satisfy exactly the same identities.
If $\Sigma$ is an equational basis of $\mathsf{V}(S)$, we also say that $\Sigma$ is an equational basis of $S$.
The ai-semiring $S$ is called finitely based or nonfinitely based according to whether $\mathsf{V}(S)$ is finitely based or not.

The \emph{finite basis problem} for a class of ai-semirings,
one of the central questions in the theory of varieties,
concerns the classification of its members with respect to the finite basis property.
While we focus here on ai-semirings, the finite basis problem for arbitrary algebras
has deep and often surprising connections with
formal languages \cite{Almeida}, the complexity of cellular automata \cite{Gulak} and
classical number-theoretic conjectures \cite{Perkins}.

A natural source of ai-semirings is the powerset construction. Let
$S$ be a semigroup. We denote by $\mathcal{P}(S)$ the collection of
all subsets of $S$, and by $\mathcal{P}^{+}(S)$ the collection of all
nonempty subsets of $S$. For $A,B\subseteq S$, define
\[
A+B=A\cup B,
\qquad
AB=\{ab\mid a\in A,\ b\in B\}.
\]
Then both $\mathcal{P}(S)$ and $\mathcal{P}^{+}(S)$ are
ai-semirings, called the power semiring and the nonempty power semiring
of $S$, respectively.

Let $S$ be an ai-semiring.
The algebra $S^0 = S \cup \{0\}$ obtained from $S$ by adjoining a new element $0$ is again an ai-semiring under the operations
\[
(\forall a\in S\cup \{0\}) \quad a+0=0+a=a,\quad a0=0a=0,
\]
with the original operations of $S$ unchanged.
Thus $S$ is a subsemiring of $S^0$, and the element
$0$ is simultaneously the additive least element and the multiplicative zero of $S^0$.
Notice that the empty set is both the additive
identity and the multiplicative zero of $\mathcal{P}(S)$. Consequently,
\[
\mathcal{P}(S)\cong
\bigl(\mathcal{P}^{+}(S)\bigr)^{0},
\]
where $T^{0}$ denotes the ai-semiring obtained from $T$ by adjoining a
new element that is simultaneously the additive identity and the
multiplicative zero. Throughout this paper, $\mathcal{P}(S)$ always
includes the empty set, whereas $\mathcal{P}^{+}(S)$ does not.

Over the past two decades, the finite basis problem for ai-semirings has been intensively studied and well developed,
for example, see~\cite{do, dolinka, dgv24, dgv25, gmrz, gpz, gv2501, jrz, pas05, rjzl, rlzc, rlyc, rz16, rzs20, rzw, sr, shap23, Volkov, wrz, yr2601, yrzs, zrc, zw}.
Within this broader program, the finite basis problem for power semirings has also been actively investigated.
Dolinka~\cite[Problem~6.5]{dolinka} and Jackson et al.~\cite[Problem~7.2]{jrz} posed the following general problem.

\begin{problem}\label{power-semiring}
When is the power semiring of a finite semigroup finitely based as an
ai-semiring?
\end{problem}

Several partial answers to Problem~\ref{power-semiring} have subsequently been obtained.
Dolinka~\cite{dolinka} proved that $\mathcal{P}(S)$ is inherently nonfinitely based whenever $S$ is a finite semigroup such that
the five-element Brandt semigroup divides $S$, or whenever $S$ itself is inherently nonfinitely based.
He later proved~\cite{do10} that, for a finite group $G$, the
$(\mathcal{P}(G),\cdot)$ is inherently nonfinitely based precisely when $G$ is not a Dedekind group.
Jackson et al.~\cite{jrz} proved that $\mathcal{P}^{+}(G)$ is strongly nonfinitely based
whenever $G$ is a finite group containing a nonabelian nilpotent subgroup.
Consequently, both $\mathcal{P}^{+}(G)$ and $\mathcal{P}(G)$ are nonfinitely based in this case.

Gusev and Volkov~\cite{gv23} proved that both $\mathcal{P}(G)$ and $\mathcal{P}^{+}(G)$
are nonfinitely based whenever $G$ is a finite nonabelian solvable group.
Dolinka et al.~\cite{dgv24} showed that
$\mathcal{P}(S)$ is nonfinitely based if $S$ is a finite inverse semigroup such that
either $S$ is not Clifford or all subgroups of $S$ are solvable and at least one of them is nonabelian.
Wu and Zhao~\cite{wz} proved that the power semiring $P^+({\dot{S}_c(W)})$ of the finite nil-semigroup $\dot{S}_c(W)$
is nonfinitely based under certain combinatorial conditions on $W$.
Recently, Gao et al.~\cite{grsy} prove that
for a finite group $G$, $\mathcal{P}^{+}(G)$ has no finite basis for its identities if and only if $|G|\geq 3$.

The ai-semiring $S_7$ (its Cayley tables are given in Table~\ref{tb7}) is of particular relevance to the present
paper. Jackson et al.~\cite[Corollary~5.1]{jrz} proved that, up to
isomorphism, $S_7$ is the unique nonfinitely based ai-semiring of
order at most three. Thus, despite its small size and relatively
simple operations, $S_7$ occupies a distinguished position in the
finite basis theory of ai-semirings. Moreover, several
studies~\cite{gmrz,jrz,yr2601} have shown that the nonfinite basis
property of $S_7$ transfers to many other finite ai-semirings whose
generated varieties contain $S_7$.
This observation led Gao et al.~\cite{gjrz2} (a manuscript in preparation, not publicly available)
to pose the following general problem:

\begin{problem}\label{prob0127}
Is every finite ai-semiring whose variety contains $S_7$ nonfinitely based?
\end{problem}

\begin{table}[ht]
\centering
\caption{The Cayley tables of $S_7$}
\label{tb7}
\begin{tabular}{c|ccc}
$+$ & $0$ & $a$ & $1$\\
\hline
$0$ & $0$ & $0$ & $0$\\
$a$ & $0$ & $a$ & $0$\\
$1$ & $0$ & $0$ & $1$
\end{tabular}
\qquad
\begin{tabular}{c|ccc}
$\cdot$ & $0$ & $a$ & $1$\\
\hline
$0$ & $0$ & $0$ & $0$\\
$a$ & $0$ & $0$ & $a$\\
$1$ & $0$ & $a$ & $1$
\end{tabular}
\end{table}

Motivated by the preceding developments on power semirings and by the distinguished role of $S_7$,
in the present paper we investigate the finite basis problem for $\mathcal{P}(S_7,\cdot)$ and $\mathcal{P}^{+}(S_7,\cdot)$.
Their Cayley tables are displayed in Tables~\ref{tbPS7} and~\ref{tbPS7plus}, respectively.

More precisely, we first provide an explicit infinite equational basis for each of $\mathcal{P}(S_7,\cdot)$ and $\mathcal{P}^{+}(S_7,\cdot)$.
We then establish a new sufficient condition for an ai-semiring to be nonfinitely based and apply it to $\mathcal{P}^{+}(S_7,\cdot)$.
The main result of this paper is that both $\mathcal{P}(S_7,\cdot)$ and $\mathcal{P}^{+}(S_7,\cdot)$ are nonfinitely based,
thereby resolving the finite basis problem for these two power semirings.
Consequently, our results provide a further partial answer to Problem~\ref{power-semiring}.

Moreover, we show that
\[
S_7\in
\mathsf{V}\bigl(\mathcal{P}^{+}(S_7,\cdot)\bigr).
\]
Since $\mathcal{P}^{+}(S_7,\cdot)$ is a subsemiring of $\mathcal{P}(S_7,\cdot)$, it follows that
\[
S_7
\in
\mathsf{V}\bigl(\mathcal{P}^{+}(S_7,\cdot)\bigr)
\subseteq
\mathsf{V}\bigl(\mathcal{P}(S_7,\cdot)\bigr).
\]
Thus these two power semirings provide two further examples
supporting an affirmative answer to
Problem~\ref{prob0127}.

The above inclusion naturally raises the question of how far apart the varieties
generated by the full and nonempty power semirings of a semigroup can be,
or, equivalently, how many varieties may lie between them.
This leads to the following general problem.

\begin{problem}\label{interval}
Let $S$ be a semigroup. What is the cardinality of the interval
\[
[\mathsf{V}(\mathcal{P}^{+}(S)), \mathsf{V}(\mathcal{P}(S))]
\]
in the lattice of semiring varieties?
\end{problem}

In the present paper, we answer Problem~\ref{interval} for the multiplicative reduct of $S_7$.
More precisely, we prove that the interval
\[
[\mathsf{V}(\mathcal{P}^{+}(S_7,\cdot)),
  \mathsf{V}(\mathcal{P}(S_7,\cdot))]
\]
contains $2^{\aleph_0}$ distinct varieties and therefore has the cardinality of the continuum.
This provides a partial answer to Problem~\ref{interval}.

\begin{table}[ht]
\centering
\caption{The Cayley tables of $P(S_7, \cdot)$}
\label{tbPS7}
\setlength{\tabcolsep}{3pt}

\begin{tabular}{c|cccccccc}
$+$&$1$&$2$&$3$&$4$&$5$&$6$&$7$&$8$\\
\hline
$1$&$1$&$1$&$1$&$1$&$1$&$1$&$1$&$1$\\
$2$&$1$&$2$&$1$&$1$&$2$&$2$&$1$&$2$\\
$3$&$1$&$1$&$3$&$1$&$3$&$1$&$3$&$3$\\
$4$&$1$&$1$&$1$&$4$&$1$&$4$&$4$&$4$\\
$5$&$1$&$2$&$3$&$1$&$5$&$2$&$3$&$5$\\
$6$&$1$&$2$&$1$&$4$&$2$&$6$&$4$&$6$\\
$7$&$1$&$1$&$3$&$4$&$3$&$4$&$7$&$7$\\
$8$&$1$&$2$&$3$&$4$&$5$&$6$&$7$&$8$
\end{tabular}
\qquad
\begin{tabular}{c|cccccccc}
$\cdot$&$1$&$2$&$3$&$4$&$5$&$6$&$7$&$8$\\
\hline
$1$&$1$&$1$&$3$&$1$&$5$&$1$&$3$&$8$\\
$2$&$1$&$2$&$3$&$1$&$5$&$2$&$3$&$8$\\
$3$&$3$&$3$&$5$&$3$&$5$&$3$&$5$&$8$\\
$4$&$1$&$1$&$3$&$1$&$5$&$4$&$3$&$8$\\
$5$&$5$&$5$&$5$&$5$&$5$&$5$&$5$&$8$\\
$6$&$1$&$2$&$3$&$4$&$5$&$6$&$7$&$8$\\
$7$&$3$&$3$&$5$&$3$&$5$&$7$&$5$&$8$\\
$8$&$8$&$8$&$8$&$8$&$8$&$8$&$8$&$8$
\end{tabular}
\end{table}

\begin{table}[ht]
\centering
\caption{The Cayley tables of $P^{+}(S_7, \cdot)$}
\label{tbPS7plus}
\setlength{\tabcolsep}{3pt}

\begin{tabular}{c|ccccccc}
$+$&$1$&$2$&$3$&$4$&$5$&$6$&$7$\\
\hline
$1$&$1$&$1$&$1$&$1$&$1$&$1$&$1$\\
$2$&$1$&$2$&$1$&$1$&$2$&$2$&$1$\\
$3$&$1$&$1$&$3$&$1$&$3$&$1$&$3$\\
$4$&$1$&$1$&$1$&$4$&$1$&$4$&$4$\\
$5$&$1$&$2$&$3$&$1$&$5$&$2$&$3$\\
$6$&$1$&$2$&$1$&$4$&$2$&$6$&$4$\\
$7$&$1$&$1$&$3$&$4$&$3$&$4$&$7$
\end{tabular}
\qquad
\begin{tabular}{c|ccccccc}
$\cdot$&$1$&$2$&$3$&$4$&$5$&$6$&$7$\\
\hline
$1$&$1$&$1$&$3$&$1$&$5$&$1$&$3$\\
$2$&$1$&$2$&$3$&$1$&$5$&$2$&$3$\\
$3$&$3$&$3$&$5$&$3$&$5$&$3$&$5$\\
$4$&$1$&$1$&$3$&$1$&$5$&$4$&$3$\\
$5$&$5$&$5$&$5$&$5$&$5$&$5$&$5$\\
$6$&$1$&$2$&$3$&$4$&$5$&$6$&$7$\\
$7$&$3$&$3$&$5$&$3$&$5$&$7$&$5$
\end{tabular}
\end{table}

\section{Preliminaries}
In this section, we collect some basic notation, definitions, and tools that will be used throughout the paper.
Although every ai-semiring in the subsequent sections is commutative,
the results quoted below are stated for general ai-semirings (whose multiplication need not be commutative).
This formulation keeps them applicable in future work that may not impose commutativity.

Let $X$ be a countably infinite set of variables,
let $X^+$ denote the free semigroup over $X$,
$X^*$ the free monoid over $X$ (with $\varepsilon$ denoting the empty word),
and $X_c^+$ the free commutative semigroup over $X$.

An \emph{ai-semiring term} (or simply a \emph{term}) over $X$ is a finite nonempty set of words in $X^+$.
(In what follows, terms are denoted by bold lowercase letters \(\mathbf{u}, \mathbf{v}, \mathbf{w}, \dots\),
while ordinary lowercase letters \(x, y, z, \dots\) stand for variables.)
We write a term as a formal sum of its elements; that is,
$\bw=\bu_1+\bu_2+\cdots+\bu_n$ means $\bw=\{\bu_1, \bu_2, \ldots, \bu_n\}$.
The order of the summands in the formal sum is irrelevant,
and repeated occurrences of the same word are identified with a single occurrence.
Two terms are equal if and only if their underlying sets coincide.

The collection of all terms over $X$, denoted by $P_f(X^+)$,
forms an ai-semiring under the usual term addition and multiplication.
By \cite[Theorem 2.5]{kp}, $P_f(X^+)$ is free in the variety of all ai-semirings over $X$.
Although the multiplicative reduct of $P_f(X^+)$ is not cancellative in general,
cancellation does hold in the following cases:
if $\bp$, $\bq$, and $\br$ are terms (at least one of which is a word) such that $\bp\bq=\bp\br$, then $\bq=\br$;
if $\bq\bp=\br\bp$ with at least one of $\bp, \bq, \br$ being a word, then $\bq=\br$.
Furthermore,
for any terms $\bu$ and $\bv$, if $\bu\bv$ is a word, then
both $\bu$ and $\bv$ are themselves words.

Let $\bu$ and $\bv$ be terms.
We say that \emph{$\bu$ is a subterm of $\bv$}, and write $\bu\leq \bv$,
if there exist terms $\bp_1, \bp_2, \bp_3$ such that
\[
\bv=\bp_1\bu\bp_2+\bp_3.
\]
Here $\bp_1$ or $\bp_2$ may be the empty word (acting as the multiplicative identity),
and $\bp_3$ may be the empty set (acting as the additive identity).
In particular, if $\bp_1$ and $\bp_2$ are both empty, then $\bu$ is called an \emph{additive subterm} of $\bv$;
if $\bp_3$ is empty, then $\bu$ is called a \emph{multiplicative subterm} of $\bv$.
Under the subset representation, an additive subterm of $\bv$ corresponds to a nonempty subset of $\bv$.
If $\bv$ is a word and $\bu\leq \bv$,
then $\bu$ must be a multiplicative subterm of $\bv$ and is itself a word;
in this case we say that $\bu$ is \emph{subword} of $\bv$.
Moreover, the relation $\leq$ is a partial order on $P_f(X^+)$.
Although $\leq$ is not compatible with the multiplication of $P_f(X^+)$,
the corresponding relation on $P_f(X^+_c)$ is compatible with its multiplication.

An \emph{ai-semiring substitution} (or simply a \emph{substitution}) is a semiring homomorphism from $P_f(X^+)$ to itself.
The term $\bv$ is called \emph{$\bu$-free} if for every substitution $\varphi$,
the term $\varphi(\bu)$ is not a subterm of $\bv$.
The following statement follows directly from the definitions of subterm and freeness.

\begin{lem}\label{lem26012701}
Let $\bu$, $\bv$ and $\bw$ be terms.
If $\bv$ is $\bu$-free and $\bu$ is a subterm of $\bw$, then $\bv$ is also $\bw$-free.
\end{lem}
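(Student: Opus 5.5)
We argue by contraposition. Suppose $\bv$ is not $\bw$-free. Our aim is to show that $\bv$ is then not $\bu$-free, which contradicts the hypothesis.

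Since $\bv$ is not $\bw$-free, there is a substitution $\varphi$ such that $\varphi(\bw)\le \bv$. We also know $\bu\le\bw$, so we may write $\bw=\bp_1\bu\bp_2+\bp_3$, where $\bp_1,\bp_2$ are terms or the empty word and $\bp_3$ is a term or the empty set.

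Substitutions are semiring homomorphisms, so applying $\varphi$ gives
\[
\varphi(\bw)=\varphi(\bp_1)\varphi(\bu)\varphi(\bp_2)+\varphi(\bp_3).
\]
Here we use the conventions that $\varphi$ sends the empty word to the empty word, which acts as a multiplicative identity, and sends the empty set to the empty set, which acts as an additive identity. Under these conventions the degenerate cases need no separate treatment. This display says exactly that $\varphi(\bu)\le\varphi(\bw)$.

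Since $\le$ is a partial order on $P_f(X^+)$, it is transitive. From $\varphi(\bu)\le\varphi(\bw)\le\bv$ we therefore get $\varphi(\bu)\le\bv$. So $\varphi$ witnesses that $\bv$ is not $\bu$-free, which is the desired contradiction.

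The only point needing care is the claim that substitutions preserve the subterm relation, in particular when $\bp_1$ or $\bp_2$ is the empty word. Formally, $\varphi$ is defined only on $P_f(X^+)$, so in that case we should not apply $\varphi$ to the empty word at all. Instead we simply omit the corresponding factor from both sides of the display, and the argument goes through unchanged. Transitivity of $\le$ is supplied by the paper's statement that $\le$ is a partial order. If one wanted to check it directly, substituting one decomposition into the other and using distributivity gives $\bv=\bp_1'\bp_1\bu\bp_2\bp_2'+(\bp_1'\bp_3\bp_2'+\bp_3')$.
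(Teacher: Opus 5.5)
Your argument is correct and is the direct unpacking of the definitions that the paper relies on; the paper gives no written proof, remarking only that the lemma follows directly from the definitions of subterm and freeness. As you show, a substitution $\varphi$ carries $\bw=\bp_1\bu\bp_2+\bp_3$ to a decomposition witnessing $\varphi(\bu)\le\varphi(\bw)$, transitivity of $\le$ then turns any $\varphi(\bw)\le\bv$ into $\varphi(\bu)\le\bv$, and your treatment of the empty-word and empty-set cases and your distributivity check of transitivity are both fine.
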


An \emph{ai-semiring identity} (or simply an \emph{identity}) is a formal expression of the form
$\bu\approx \bv$, where $\bu$ and $\bv$ are terms.
Let $S$ be an ai-semiring and $\bu\approx \bv$ an identity.
We say that $S$ \emph{satisfies} $\bu\approx \bv$, or that $\bu\approx \bv$ \emph{holds} in $S$,
if $\varphi(\bu)=\varphi(\bv)$ for every semiring homomorphism $\varphi\colon P_f(X^+) \rightarrow S$.

The following result, which concerns the equational logic of ai-semirings, appears in \cite[Lemma 2]{do}.
To the best of our knowledge, this is the first occurrence of this result in the context of ai-semirings.
Moreover, it can be seen as a direct specialization of the universal-algebraic fact \cite[Exercise II.14.11]{bs} to ai-semirings.
\begin{lem}\label{lem02}
Let $\Sigma$ be a set of identities and let $\bu\approx \bv$ be a nontrivial identity.
Then $\bu\approx \bv$ is derivable from $\Sigma$ if and only if there
exist terms $\bt_1, \bt_2, \dots, \bt_n\in P_f(X^+)$ such that $\bu=\bt_1$, $\bv=\bt_n$ and,
for each $1\leq i<n$, there are terms $\bp_i, \bq_i, \br_i, \bs_i, \bs'_{i} \in P_f(X^+)$ and a substitution
$\varphi_i\colon P_f(X^+) \to P_f(X^+)$ such that
\[
\bt_i = \bp_i\varphi_i(\bs_i)\bq_i+\br_i, \quad \bt_{i+1}=\bp_i\varphi_i(\bs'_{i})\bq_i+\br_i,
\]
where $\bs_{i} \approx \bs'_{i}\in\Sigma$ or $\bs'_{i} \approx \bs_{i} \in \Sigma$,
$\bp_i$ or $\bq_i$ may be the empty word \up(acting as the multiplicative identity\up),
and $\br_i$ may  be the empty set \up(acting as the additive identity\up).
\end{lem}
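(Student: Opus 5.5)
We would prove Lemma~\ref{lem02} by the standard route: identify derivability from $\Sigma$ with membership in the smallest fully invariant congruence of the free ai-semiring $P_f(X^+)$ that contains $\Sigma$. Define a relation $\sim$ on $P_f(X^+)$ by setting $\bu\sim\bv$ if either $\bu=\bv$ or there is a chain $\bt_1,\dots,\bt_n$ of the form described in the statement. We use that $P_f(X^+)$ is free in the variety of all ai-semirings \cite[Theorem 2.5]{kp}. By Birkhoff's completeness theorem, $\bu\approx\bv$ is derivable from $\Sigma$ iff it holds in every ai-semiring satisfying $\Sigma$. So the plan is to prove two inclusions.

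\emph{Easy direction.} Each single step $\bt_i\to\bt_{i+1}$ is a consequence of $\Sigma$. Indeed, $\bs_i\approx\bs'_i$ gives $\varphi_i(\bs_i)\approx\varphi_i(\bs'_i)$ by substitution. Multiplying on the left by $\bp_i$ and on the right by $\bq_i$, and adding $\br_i$, is compatible with any identity. When $\bp_i$ or $\bq_i$ is the empty word, or $\br_i$ is the empty set, the corresponding operation is simply omitted. Transitivity then gives $\bu\approx\bv$ for the whole chain.

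\emph{Main direction.} We show that $\sim$ is a fully invariant congruence on $P_f(X^+)$ containing every pair from $\Sigma$.
\begin{itemize}
\item $\sim$ is reflexive by definition. It is symmetric because a chain can be reversed, as $\Sigma$ is used in both orientations. It is transitive by concatenating chains.
\item Containment of $\Sigma$ is a one-step chain with $\varphi$ the identity substitution and all padding empty.
\item Compatibility with addition: replace each $\br_i$ by $\br_i+\bw$, or by $\bw$ if $\br_i$ was empty.
\item Compatibility with multiplication: left multiplication by $\bw$ gives $\bw\bt_i=(\bw\bp_i)\varphi_i(\bs_i)\bq_i+\bw\br_i$ by distributivity, with the conventions for empty $\bp_i$ or empty $\br_i$. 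Right multiplication is symmetric.
\item Full invariance: for a substitution $\psi$ we have $\psi(\bt_i)=\psi(\bp_i)\,(\psi\circ\varphi_i)(\bs_i)\,\psi(\bq_i)+\psi(\br_i)$, and $\psi\circ\varphi_i$ is again a substitution.
\end{itemize}

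Given this, the quotient $P_f(X^+)/{\sim}$ satisfies $\Sigma$. Any homomorphism $P_f(X^+)\to P_f(X^+)/{\sim}$ lifts, by freeness, to a substitution $\theta$ followed by the canonical projection. Full invariance then gives $\theta(\bs)\sim\theta(\bs')$ for each $\bs\approx\bs'$ in $\Sigma$.

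Now suppose $\bu\approx\bv$ is derivable from $\Sigma$. Then it holds in this quotient, in particular under the canonical projection, so $\bu\sim\bv$. Since the identity is nontrivial, $\bu\neq\bv$, so a genuine chain exists.

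The main obstacle is not conceptual but bookkeeping. The degenerate cases (empty $\bp_i$, $\bq_i$, $\br_i$) must be handled uniformly when multiplying or adding. One also has to check that distributivity in $P_f(X^+)$ keeps each modified step in the exact prescribed shape $\bp\varphi(\bs)\bq+\br$. I would treat this by formally adjoining the empty word and empty set as an identity and a zero in the computations, and observing that the resulting expressions are legitimate terms.
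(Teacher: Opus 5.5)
Your proof is correct. It shows that the chain relation $\sim$ is a fully invariant congruence on $P_f(X^+)$ containing $\Sigma$ (closed under additive and left/right multiplicative translations and under substitutions, with compatibility following from transitivity), and then applies Birkhoff completeness; this is exactly the standard argument for the universal-algebraic fact \cite[Exercise II.14.11]{bs}, specialized to ai-semirings. The paper gives no proof of its own and simply cites \cite[Lemma 2]{do} together with that exercise, so your route coincides with the one it defers to.
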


The following lemma, which is the compactness theorem of equational logic (see \cite[Exercise II.14.10]{bs}),
will be our main tool in Sections \ref{sec3} and \ref{sec4} for proving that both $\mathcal{P}(S_7,\cdot)$ and $\mathcal{P}^{+}(S_7,\cdot)$ have no finite equational basis.
\begin{lem}\label{el26012701}
Let $\mathcal{V}$ be an ai-semiring variety.
If $\mathcal{V}$ is finitely based,
then every equational basis of $\mathcal{V}$ contains a finite subset that also defines $\mathcal{V}$.
Equivalently, if $\mathcal{V}$ has an infinite equational basis none of whose finite subsets defines $\mathcal{V}$,
then $\mathcal{V}$ is nonfinitely based.
\end{lem}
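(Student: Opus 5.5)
The plan is to reduce the statement to the finiteness of derivations, using Lemma~\ref{lem02} together with Birkhoff's completeness theorem for equational logic. Suppose $\mathcal{V}$ is finitely based, say by a finite set $\Sigma$ of identities, and let $\Gamma$ be an arbitrary equational basis of $\mathcal{V}$. I would produce a finite $\Gamma_0\subseteq\Gamma$ that derives every member of $\Sigma$.

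First, each identity $\bs\approx\bs'$ in $\Sigma$ holds in $\mathcal{V}$. Since $\Gamma$ defines $\mathcal{V}$, Birkhoff's completeness theorem shows that $\bs\approx\bs'$ is derivable from $\Gamma$. If the identity is trivial it needs no members of $\Gamma$ at all. Otherwise Lemma~\ref{lem02} gives a finite chain $\bt_1,\dots,\bt_n$ from $\bs$ to $\bs'$. Each step of this chain uses exactly one identity of $\Gamma$, possibly reversed. Let $\Gamma_{\bs\approx\bs'}$ be the finite set of identities of $\Gamma$ used in this chain, and put
\[
\Gamma_0=\bigcup_{(\bs\approx\bs')\in\Sigma}\Gamma_{\bs\approx\bs'}.
\]
This is a finite union of finite sets, hence finite.

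Next, by construction $\Gamma_0$ derives every identity of $\Sigma$. Since $\Sigma$ derives every identity of $\mathcal{V}$, and derivability is transitive, $\Gamma_0$ derives every identity of $\mathcal{V}$. Conversely, $\Gamma_0\subseteq\Gamma$ and every member of $\Gamma$ holds in $\mathcal{V}$, so every consequence of $\Gamma_0$ holds in $\mathcal{V}$ by soundness. Hence $\Gamma_0$ defines $\mathcal{V}$. The "equivalently" clause is simply the contrapositive of this statement.

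There is no real obstacle here. The only point needing care is the appeal to completeness, namely that an identity holding in $\mathcal{V}$ is derivable from any basis of $\mathcal{V}$. This is exactly what it means for $\Gamma$ to define $\mathcal{V}$, combined with Birkhoff's theorem, and Lemma~\ref{lem02} then makes the finiteness of each derivation explicit.
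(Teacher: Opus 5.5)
Your proof is correct and is the standard argument for the compactness theorem of equational logic: each member of the finite basis has a finite derivation from $\Gamma$ (by completeness, with Lemma~\ref{lem02} making the finiteness explicit), so finitely many members of $\Gamma$ suffice, and the ``equivalently'' clause is just the contrapositive. The paper gives no proof of its own and simply cites this fact as Exercise II.14.10 of Burris and Sankappanavar, so your write-up supplies exactly the argument being invoked.
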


Following Volkov~\cite{volkov2024}, we write $\bu \preceq \bv$ (or equivalently $\bv \succeq \bu$)
to denote the identity $\bv \approx \bv+\bu$ and call it an \emph{ai-semiring inequality} (or simply an \emph{inequality}).
The term $\mathbf{u}$ is called the \emph{lower side} and $\mathbf{v}$ the \emph{upper side} of the inequality.
It is easy to verify that an ai-semiring $S$ satisfies an inequality $\bu\preceq \bv$
if and only if $\varphi(\bu)\leq_S \varphi(\bv)$ for every semiring homomorphism $\varphi\colon P_f(X^+)\rightarrow S$.
Consequently, $S$ satisfies an identity $\bu\approx \bv$
precisely when it satisfies both inequalities $\bu\preceq\bv$ and $\bv\preceq\bu$.
For this reason, every set of identities can be equivalently expressed as a set of inequalities.

Suppose that $\Sigma$ is a set of identities.
Let $\mathbf{u}\approx \mathbf{v}$ be an identity such that
\[
\mathbf{u}=\mathbf{u}_1+\cdots+\mathbf{u}_k,\quad \mathbf{v}=\mathbf{v}_1+\cdots+\mathbf{v}_\ell,
\]
where $\mathbf{u}_i,\mathbf{v}_j\in X^+$ for $1\leq i\leq k$ and $1\leq j\leq \ell$.
One can readily check that the ai-semiring variety defined by $\mathbf{u}\approx \mathbf{v}$
coincides with the ai-semiring variety defined by the inequalities
\[
\mathbf{u}_i\preceq \mathbf{v}, \quad \mathbf{v}_j\preceq \mathbf{u}\quad(1\leq i\leq k, ~1\leq j\leq \ell).
\]
Consequently, to prove that $\mathbf{u}\approx \mathbf{v}$ is derivable from $\Sigma$,
it suffices to show that for every $i$ and $j$,
the inequalities
$\mathbf{u}_i\preceq \mathbf{v}$, $\mathbf{v}_j\preceq \mathbf{u}$
can be derived from $\Sigma$.
For this reason, throughout this paper we restrict our attention to the inequalities of the form $\bq\preceq\bu$,
where $\bq$ is a word and $\bu$ is a term.
This technique will be used repeatedly in the sequel.

Next, we introduce some notation.
Let $\bw$ be a nonempty word, and let $x$ be a variable. Then
\begin{itemize}
\item $c(\bw)$ denotes the \emph{content} of $\bw$, that is, the set of all variables that occur in $\bw$;

\item $\ell(\bw)$ denotes the \emph{length} of $\bw$, that is, the number of variables occurring in $\bw$ counting multiplicities;

\item $S_2(\bw)$ denotes the set of all subwords of length $2$ of $\bw$;

\item $m(x, \bw)$ denotes the number of occurrences of $x$ in $\bw$.
\end{itemize}

Now let $\bu$ be a term such that $\bu=\bu_1+\bu_2+\cdots+\bu_n$,
where $\bu_i \in X^+$, $1 \leq i \leq n$.
Let $\bq$ be a nonempty word, and let $k$ be a positive integer. Then
\begin{itemize}
\item $c(\bu)$ denotes the \emph{content} of $\bu$, that is,
\[
c(\bu)=\bigcup_{i=1}^n c(\bu_i);
\]

\item $L_{\geq k}(\bu)$ denotes the set $\{\bu_i \in \bu \mid \ell(\bu_i)\geq k\}$;

\item $D_{\bq}(\bu)$ denotes the set $\{\bu_i \in \bu \mid c(\bu_i)\subseteq c(\bq)\}$;

\item $S_2(\bu)$ denotes the set $\bigcup_{1\leq i \leq n}S_2(\bu_i)$;
%
%\item $\delta(\bu)$ denotes the set of subsets $Z$ of $c(\bu)$ such that for every
% $\bu_i\in\bu$, $Z\cap c(\bu_i)$ is a singleton and $occ(x,\bu_i)=1$ if $\{x\}=Z\cap c(\bu_i)$.
\end{itemize}

Finally, we state the characterization of identities of $D_2$, which follows directly from Shao and Ren~\cite[Lemma 1.1]{sr}.
\begin{lem}\label{lem01}
Let $\bq\preceq\bu$ be a nontrivial inequality such that
$\bu=\bu_1+\bu_2+\cdots+\bu_n$ and $\bu_i, \bq \in X^+$ for all $1\leq i \leq n$. Then
$\bq\preceq\bu$ holds in $D_2$ if and only if $c(\bu_i) \subseteq c(\bq)$ for some $\bu_i \in \bu$.
\end{lem}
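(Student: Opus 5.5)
Lemma~\ref{lem01} is stated as following directly from \cite[Lemma~1.1]{sr}. A self-contained argument is also short, once we fix $D_2$. I take $D_2$ to be the two-element distributive lattice $\{0,1\}$, with $+$ the join and $\cdot$ the meet. Under a homomorphism $\varphi\colon P_f(X^+)\to D_2$, a word $\bw$ is sent to $1$ exactly when every variable in $c(\bw)$ is sent to $1$. A term is sent to $1$ exactly when at least one of its summands is. The inequality $\bq\preceq\bu$ holds in $D_2$ if and only if $\varphi(\bq)\le\varphi(\bu)$ for every $\varphi$. Equivalently, whenever $\varphi(\bq)=1$, some $\bu_i$ satisfies $\varphi(\bu_i)=1$.

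For sufficiency, suppose $c(\bu_i)\subseteq c(\bq)$ for some $i$. If $\varphi(\bq)=1$, then every variable of $\bq$ is sent to $1$. This includes every variable of $\bu_i$, so $\varphi(\bu_i)=1$ and hence $\varphi(\bu)=1$.

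For necessity, argue by contraposition. Suppose that for every $i$ there is a variable $x_i\in c(\bu_i)\setminus c(\bq)$. Define $\varphi$ by sending each variable of $c(\bq)$ to $1$ and every other variable to $0$. Then $\varphi(\bq)=1$. Each $\bu_i$ contains $x_i$, which is sent to $0$, so $\varphi(\bu_i)=0$ for every $i$ and $\varphi(\bu)=0$. This violates $\bq\preceq\bu$.

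There is no real obstacle here. The only point needing care is which two-element semiring $D_2$ denotes; the argument above assumes the lattice reading.
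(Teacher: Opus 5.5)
Your argument is correct, and your reading of $D_2$ matches the paper. The paper says $D_2$ embeds into $S_{(4,450)}$, and the copy there is $\{2,3\}$: with $2<3$, $+$ is the join and $\cdot$ is the meet. The other lattice reading, with $+$ as meet and $\cdot$ as join, is isomorphic to this one via $0\leftrightarrow 1$. So your caveat causes no problem.

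The paper gives no argument of its own. It imports the lemma from Shao and Ren's characterization of all identities $\bu\approx\bv$ of $D_2$. That characterization amounts to: every summand of either side has a summand of the other side whose content is contained in its content. Applying it to the identity $\bu+\bq\approx\bu$ that $\bq\preceq\bu$ abbreviates leaves exactly one nontrivial condition, namely that some $\bu_i$ satisfies $c(\bu_i)\subseteq c(\bq)$.

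You instead evaluate directly into $\{0,1\}$:
\begin{itemize}
\item words go to the meet of their variables' values, and sums go to the join of their summands;
\item your countermodel sends $c(\bq)$ to $1$ and every other variable to $0$. It extends to a homomorphism because $P_f(X^+)$ is free.
\end{itemize}
This is the standard proof underlying the cited result, tailored to inequalities whose lower side is a word. It makes the lemma self-contained and shows the nontriviality hypothesis plays no role. The citation route, by contrast, gives the full identity characterization at no extra cost.
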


\section{Equational basis for $\mathcal{P}(S_7)$}\label{sec3}
In this section, we provide an explicit infinite equational basis for $\mathcal{P}(S_7,\cdot)$
and prove that its identities admit no finite basis.
It is readily verified that $\mathcal{P}(S_7,\cdot)$ is isomorphic to a subdirect product of $S_{53}^{0}$ and $S_{(4,450)}$.
Their Cayley tables are displayed in Tables~\ref{tb634} and~\ref{tb450}, respectively.
Hence,
\[
\mathsf{V}\bigl(\mathcal{P}(S_7,\cdot)\bigr)
=
\mathsf{V}\bigl(S_{53}^{0},S_{(4,450)}\bigr).
\]
Consequently, an identity holds in $\mathcal{P}(S_7,\cdot)$ if and only if it holds in both $S_{53}^{0}$ and $S_{(4,450)}$.
We therefore begin by characterizing the identities of these two ai-semirings and determining equational bases for them.
We first consider $S_{53}^{0}$. To describe its identities, we begin with the following characterization of the identities of $S_{53}$,
which is taken from~\cite{yrg}.
\begin{table}[ht]
\centering
\caption{The Cayley tables of $S_{53}^0$}
\label{tb634}
\begin{tabular}{c|cccc}
$+$ & $1$ & $2$ & $3$ & $4$\\
\hline
$1$ & $1$ & $1$ & $1$ & $1$\\
$2$ & $1$ & $2$ & $2$ & $2$\\
$3$ & $1$ & $2$ & $3$ & $3$\\
$4$ & $1$ & $2$ & $3$ & $4$
\end{tabular}
\qquad
\begin{tabular}{c|cccc}
$\cdot$ & $1$ & $2$ & $3$ & $4$\\
\hline
$1$ & $1$ & $1$ & $1$ & $4$\\
$2$ & $1$ & $1$ & $2$ & $4$\\
$3$ & $1$ & $2$ & $3$ & $4$\\
$4$ & $4$ & $4$ & $4$ & $4$
\end{tabular}
\end{table}

\begin{table}[ht]
\centering
\caption{The Cayley tables of $S_{(4,450)}$}
\label{tb450}
\begin{tabular}{c|cccc}
$+$ & $1$ & $2$ & $3$ & $4$\\
\hline
$1$ & $1$ & $1$ & $1$ & $1$\\
$2$ & $1$ & $2$ & $3$ & $4$\\
$3$ & $1$ & $3$ & $3$ & $1$\\
$4$ & $1$ & $4$ & $1$ & $4$
\end{tabular}
\qquad
\begin{tabular}{c|cccc}
$\cdot$ & $1$ & $2$ & $3$ & $4$\\
\hline
$1$ & $1$ & $2$ & $1$ & $4$\\
$2$ & $2$ & $2$ & $2$ & $2$\\
$3$ & $1$ & $2$ & $3$ & $4$\\
$4$ & $4$ & $2$ & $4$ & $2$
\end{tabular}
\end{table}

\begin{lem}\label{lem5301}
Let $\bq\preceq\bu$ be a nontrivial inequality such that
$\bu=\bu_1+\bu_2+\cdots+\bu_n$ and $\bu_i, \bq \in X^+_c$ for all $1\leq i \leq n$.
Then $\bq\preceq\bu$ holds in $S_{53}$
if and only if $L_{\geq 2}(\bu)\neq \emptyset$, $c(\bq)\subseteq c(\bu)$,
and for every $\bw\in S_2(\bq)$ there exists $\bw'\in S_2(\bu)$ such that $c(\bw')\subseteq c(\bw)$.
\end{lem}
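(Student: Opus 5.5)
The plan is to identify $S_{53}$ with a very concrete three-element structure, so that evaluating a commutative word is transparent. Then we check the three conditions against carefully chosen assignments (necessity) and run a case analysis on the value of $\bq$ (sufficiency). Rename the elements $1,2,3$ of $S_{53}$ as $0,a,e$ respectively. From Table~\ref{tb634} (restricted to $\{1,2,3\}$), $e$ is the multiplicative identity, $0$ is a multiplicative zero, and $a^2=0$. Addition is the chain semilattice in which $0$ absorbs everything and $a+e=a$. So, writing $s\le t$ for $s+t=t$, we have $e<a<0$, and $S_{53}$ satisfies $\bq\preceq\bu$ iff $\varphi(\bq)\le\varphi(\bu)=\max_i\varphi(\bu_i)$ for every assignment $\varphi$.

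The key observation is the value of a word $\bw\in X_c^+$ under $\varphi$. It is $0$ if some variable of $\bw$ is sent to $0$, or if at least two occurrences (counted with multiplicity) of variables sent to $a$ appear in $\bw$. It is $a$ if exactly one occurrence is sent to $a$ and all others to $e$. It is $e$ if all variables are sent to $e$. Since words are commutative, "two $a$-occurrences in $\bw$" is the same as "some $\bw'\in S_2(\bw)$ with all its letters sent to $a$". Here $S_2$ of a commutative word is read as its length-$2$ submultisets.

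\emph{Necessity.} Suppose the inequality holds. If $x\in c(\bq)\setminus c(\bu)$, send $x\mapsto 0$ and all other variables to $e$; then $\varphi(\bq)=0>e=\varphi(\bu)$, a contradiction. Hence $c(\bq)\subseteq c(\bu)$. Next take $\bw=xy\in S_2(\bq)$ (possibly $x=y$), and send $x,y\mapsto a$ and everything else to $e$. Then $\varphi(\bq)=0$, so some $\bu_i$ evaluates to $0$. Since no variable goes to $0$, this $\bu_i$ contains two occurrences from $\{x,y\}$, which yields $\bw'\in S_2(\bu)$ with $c(\bw')\subseteq c(\bw)$. Finally, assume $L_{\ge2}(\bu)=\emptyset$, so $\bu$ is a sum of variables. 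If $\ell(\bq)\ge2$, the all-$a$ assignment gives $\varphi(\bq)=0$ but $\varphi(\bu)=a$. If $\bq=x$ is a single variable, then $c(\bq)\subseteq c(\bu)$ forces $x\in\bu$, and the inequality is trivial, contrary to hypothesis. Hence $L_{\ge2}(\bu)\neq\emptyset$.

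\emph{Sufficiency.} Fix $\varphi$ and split according to $\varphi(\bq)$. If it is $e$ there is nothing to prove. If it is $a$, some $x\in c(\bq)$ has $\varphi(x)=a$. Then $x\in c(\bu_i)$ for some $i$ by the content condition, and such a $\bu_i$ evaluates to $a$ or $0$, so $\varphi(\bu)\ge a$. If it is $0$, there are two subcases. Either some $z\in c(\bq)$ has $\varphi(z)=0$, and then any $\bu_i$ containing $z$ evaluates to $0$. Or $\bq$ contains two $a$-occurrences, i.e.\ some $\bw\in S_2(\bq)$ has all letters sent to $a$; the hypothesis then gives $\bw'\in S_2(\bu_i)$ with $c(\bw')\subseteq c(\bw)$, so $\bu_i$ has two $a$-occurrences and evaluates to $0$.

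The main point of care, rather than a real obstacle, is the necessity of $L_{\ge2}(\bu)\ne\emptyset$. That is the only place where nontriviality is used, and it must be handled by the case split on $\ell(\bq)$ above. Apart from this, the translation of the abstract Cayley table into the $\{e,a,0\}$ description is what makes all steps routine.
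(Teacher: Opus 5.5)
Your proof is correct. The paper gives no proof of this lemma; it quotes it from \cite{yrg}. So there is no internal argument to compare against, and your write-up serves as a self-contained verification.

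Your identification of $S_{53}$ with the restriction of Table~\ref{tb634} to $\{1,2,3\}$ is right, since $4$ is the adjoined zero. Describing it as the chain $e<a<0$, with $e$ the identity, $0$ absorbing and $a^2=0$, is what reduces the evaluation of a commutative word to counting $0$- and $a$-occurrences. Every step of both directions then checks out.

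Your method matches the paper's own proof of Lemma~\ref{lem45001} for $S_{(4,450)}$: separating assignments for necessity, and a case split on $\varphi(\bq)$ for sufficiency. The one difference is that the paper gets some necessary conditions there by recognizing subalgebras ($M_2^0$, $N_2$), whereas you build every separating assignment by hand. That is simpler here, since $S_{53}$ has only three elements.

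Your treatment also shows something the statement hides. The condition $L_{\geq2}(\bu)\neq\emptyset$ is never used for sufficiency. For a nontrivial inequality it already follows from the other two conditions:
\begin{itemize}
\item if $\ell(\bq)\geq2$, the $S_2$-condition produces a summand of length at least $2$;
\item if $\bq$ is a single variable, the content condition together with nontriviality does.
\end{itemize}
So in the case $\ell(\bq)\geq2$ of your necessity argument, you could simply have appealed to the $S_2$-condition you had already established.
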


To pass from $S_{53}$ to its zero extension $S_{53}^{0}$, we use the
following result of Wu et al.~\cite[Proposition~1.5]{wrz}, which
relates the identities of an ai-semiring to those of its zero
extension.

\begin{lem}\label{lem001}
Let $\bq\preceq \bu$ be an ai-semiring identity such that
$\bu=\bu_1+\bu_2+\cdots+\bu_n$, where $\bu_i, \bq \in X^+$, $1\leq i \leq n$.
Then $\bq\preceq \bu$ is satisfied by ${S}^0$ if and only if
$\bq\preceq D_\bq(\bu)$ holds in $S$.
\end{lem}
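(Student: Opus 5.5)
The statement to prove is Lemma~\ref{lem001}: $S^0$ satisfies $\bq\preceq\bu$ if and only if $S$ satisfies $\bq\preceq D_\bq(\bu)$. The plan is to compare evaluations into $S^0$ with evaluations into $S$, using that $0$ is both the additive identity and the multiplicative zero of $S^0$. One convention is needed first. If $D_\bq(\bu)=\emptyset$, the inequality $\bq\preceq D_\bq(\bu)$ is not a genuine identity, and we read it as false in $S$; this is correct whenever $S$ is nontrivial. We then prove the two directions separately.

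\emph{($\Rightarrow$)} Assume $S^0$ satisfies $\bq\preceq\bu$. Take any homomorphism $\psi\colon P_f(X^+)\to S$, and define $\varphi$ into $S^0$ by $\varphi(x)=\psi(x)$ for $x\in c(\bq)$ and $\varphi(x)=0$ otherwise.
\begin{itemize}
\item Every word $\bu_i$ containing a variable outside $c(\bq)$ evaluates to $0$ under $\varphi$, since $0$ is multiplicatively absorbing.
\item Since $0$ is the additive identity, $\varphi(\bu)=\varphi(D_\bq(\bu))=\psi(D_\bq(\bu))$.
\item Also $\varphi(\bq)=\psi(\bq)\in S$.
\end{itemize}
If $D_\bq(\bu)$ were empty, then $\varphi(\bu)=0$, while $\varphi(\bq)+0=\varphi(\bq)\neq 0$. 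This contradicts the hypothesis, so $D_\bq(\bu)\neq\emptyset$. Then $\psi(\bq)+\psi(D_\bq(\bu))=\psi(D_\bq(\bu))$ follows from the identity $\varphi(\bu)+\varphi(\bq)=\varphi(\bu)$ in $S^0$, where both sides lie in $S$.

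\emph{($\Leftarrow$)} Assume $S$ satisfies $\bq\preceq D_\bq(\bu)$ with $D_\bq(\bu)$ nonempty. Take any $\varphi$ into $S^0$ and split into two cases.
\begin{itemize}
\item If some $x\in c(\bq)$ has $\varphi(x)=0$, then $\varphi(\bq)=0$, and $\varphi(\bu)+0=\varphi(\bu)$ holds trivially.
\item Otherwise $\varphi$ maps $c(\bq)$ into $S$. Every word of $D_\bq(\bu)$ involves only variables of $c(\bq)$, so its value lies in $S$; here we use that $S$ is closed under both operations.
\end{itemize}
In the second case, let $\psi$ agree with $\varphi$ on $c(\bq)$ and send the other variables to arbitrary elements of $S$. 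This is possible because $S$ is nonempty. The hypothesis gives $\varphi(\bq)\le\varphi(D_\bq(\bu))$ in $S$. It remains to add $\varphi(\bu\setminus D_\bq(\bu))$. In $S^0$, addition is monotone with respect to the semilattice order, and $a\le b$ implies $a\le b+c$ because $a+b+c=b+c$. Hence $\varphi(\bq)\le\varphi(\bu)$.

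The only delicate point is the empty-$D_\bq(\bu)$ convention, together with the requirement that $S$ be nontrivial. This requirement enters only through the step $\varphi(\bq)\neq0$ in the forward direction. For one-element $S$ the statement should be read accordingly. Everything else is routine bookkeeping.
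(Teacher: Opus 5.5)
Your proof is correct. The paper gives no argument of its own here; it imports the statement from Wu et al.\ (Proposition~1.5). Your argument is the standard direct proof, so it works as a self-contained replacement for that citation. For the forward direction you send the variables outside $c(\bq)$ to $0$. For the converse you split on whether $\varphi$ sends some variable of $\bq$ to $0$; otherwise you transfer the evaluation to $S$ by extending $\varphi$ restricted to $c(\bq)$ arbitrarily. You also make the case $D_\bq(\bu)=\emptyset$ explicit, which the paper's formulation leaves implicit. In Corollary~\ref{cor530}, nonemptiness is built into the condition $L_{\geq 2}(D_\bq(\bu))\neq\emptyset$, which matches your convention.

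One correction to your closing remark: you never use that $S$ is nontrivial. The step $\varphi(\bq)\neq 0$ holds simply because $\varphi(\bq)\in S$ and the adjoined element $0$ is not in $S$. So your convention (an empty $D_\bq(\bu)$ makes the right-hand side fail) gives a correct statement for every $S$, including the one-element one. There $S^0$ is the two-element distributive lattice, and the lemma says that $S^0$ satisfies $\bq\preceq\bu$ iff $D_\bq(\bu)\neq\emptyset$, in agreement with Lemma~\ref{lem01}. The hedge about reading the statement differently for one-element $S$ can be dropped.
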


Combining Lemmas~\ref{lem5301} and~\ref{lem001}, we obtain the
following characterization of the identities of $S_{53}^{0}$.

\begin{cor}\label{cor530}
Let $\bq\preceq\bu$ be a nontrivial inequality such that
$\bu=\bu_1+\bu_2+\cdots+\bu_n$ and $\bu_i, \bq \in X^+_c$ for all $1\leq i \leq n$.
Then $\bq\preceq\bu$ holds in $S_{53}^{0}$ if and only if
$L_{\geq2}(D_{\bq}(\bu))\neq\emptyset$, $c(\bq)= c(D_{\bq}(\bu)\bigr)$,
and, for every $\bw\in S_2(\bq)$, there exists $\bw'\in S_2\bigl(D_{\bq}(\bu)\bigr)$ such that $c(\bw')\subseteq c(\bw)$.
\end{cor}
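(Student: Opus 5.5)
By Lemma~\ref{lem001}, $\bq\preceq\bu$ holds in $S_{53}^{0}$ if and only if $\bq\preceq D_{\bq}(\bu)$ holds in $S_{53}$. So the plan is to apply Lemma~\ref{lem5301} to the inequality $\bq\preceq D_{\bq}(\bu)$ and then simplify the resulting conditions.

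First I dispose of the degenerate case $D_{\bq}(\bu)=\emptyset$. Here the term $\bq\preceq D_{\bq}(\bu)$ is not defined as an ordinary inequality. Instead I argue directly that $S_{53}^{0}$ fails $\bq\preceq\bu$. Send every variable of $c(\bq)$ to the element $3$, which is the multiplicative identity of $S_{53}$ in Table~\ref{tb634}, and send every other variable to the zero $4$. Then $\bq$ evaluates to $3$. Every $\bu_i$ contains some variable outside $c(\bq)$, so every $\bu_i$ evaluates to $4$. Hence $\bu$ evaluates to $4$, and $3+4=3\neq 4$, so the inequality fails. On the other side, the condition $L_{\geq2}(D_{\bq}(\bu))\neq\emptyset$ in the corollary fails as well, so both sides are false and agree. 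I would check that Lemma~\ref{lem001} is stated so as to cover this case, and otherwise treat it separately as above.

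Next, suppose $D_{\bq}(\bu)\neq\emptyset$. The inequality $\bq\preceq D_{\bq}(\bu)$ is either nontrivial or trivial.

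In the nontrivial case, Lemma~\ref{lem5301} applies. It gives three conditions: $L_{\geq2}(D_{\bq}(\bu))\neq\emptyset$, $c(\bq)\subseteq c(D_{\bq}(\bu))$, and the condition on $S_2$ with $D_{\bq}(\bu)$ in place of $\bu$. By definition every word of $D_{\bq}(\bu)$ has content contained in $c(\bq)$, so $c(D_{\bq}(\bu))\subseteq c(\bq)$ always holds. The inclusion $c(\bq)\subseteq c(D_{\bq}(\bu))$ is therefore equivalent to the equality $c(\bq)=c(D_{\bq}(\bu))$, and the stated form follows.

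In the trivial case, $\bq$ is itself a summand of $D_{\bq}(\bu)$, and the inequality holds. I then check that the three conditions hold too. The content equality is clear. Every $\bw\in S_2(\bq)$ already lies in $S_2(D_{\bq}(\bu))$, so the $S_2$ condition holds. The condition $L_{\geq2}\neq\emptyset$ holds when $\ell(\bq)\geq2$.

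The main obstacle is the trivial subcase where $\bq=x$ is a single variable lying in $\bu$. Here $\bq\preceq\bu$ holds trivially, but $D_{\bq}(\bu)$ may consist of $x$ alone, so $L_{\geq2}$ could be empty and the equivalence would fail. I would resolve this by using the standing hypothesis that $\bq\preceq\bu$ is nontrivial, read as $\bq\notin\bu$. Under that reading $\bq\notin D_{\bq}(\bu)$, so the reduced inequality is also nontrivial and Lemma~\ref{lem5301} applies directly. With that reading the corollary reduces to the set-theoretic simplification above, and the trivial case never arises.
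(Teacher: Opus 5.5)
Your proposal is correct and is essentially the paper's argument: the paper obtains the corollary simply by combining Lemma~\ref{lem001} with Lemma~\ref{lem5301}, and the only extra observation needed is yours, namely that $c(D_{\bq}(\bu))\subseteq c(\bq)$ holds by definition, so the inclusion becomes an equality. Two edge cases that the paper leaves implicit are handled correctly by you: the case $D_{\bq}(\bu)=\emptyset$, via the evaluation sending $c(\bq)$ to $3$ and all other variables to $4$, and the reading of nontriviality as $\bq\notin\bu$, which makes the reduced inequality nontrivial as well.
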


We next turn to the other factor $S_{(4,450)}$.
The following result completely describes the identities of $S_{(4, 450)}$.
\begin{lem}\label{lem45001}
Let $\bq\preceq\bu$ be a nontrivial inequality such that
$\bu=\bu_1+\bu_2+\cdots+\bu_n$ and $\bu_i, \bq \in X^+_c$ for all $1\leq i \leq n$.
Then $\bq\preceq\bu$ holds in $S_{(4, 450)}$
if and only if $c(\bq)=c(D_\bq(\bu))$, $\ell(\bq)\geq 2$ and one of the following conditions is satisfied:
\begin{itemize}
\item[$(1)$] $M_1(\bq)=\emptyset$;

\item[$(2)$] $M_1(\bq) \neq \emptyset$ and for every $x \in M_1(\bq)$, there exists some $\bu_j \in D_\bq(\bu)$ such that $m(x,\bu_j) = 1$.
\end{itemize}
\end{lem}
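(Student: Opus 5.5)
The plan is to use the explicit structure of $S_{(4,450)}$ and evaluate both sides of $\bq\preceq\bu$ under every assignment. From Table~\ref{tb450}, the element $2$ is both the additive identity and the multiplicative zero, and $3$ is the multiplicative identity. Moreover $1$ is the additive top and multiplicatively idempotent, with $1\cdot 4=4\cdot1=4$ and $4\cdot4=2$. The natural order is $2<3<1$ and $2<4<1$, with $3,4$ incomparable.

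\emph{Evaluating a word.} For a homomorphism $\varphi$ into $S_{(4,450)}$ and a word $\bw\in X_c^+$, $\varphi(\bw)$ is computed as follows:
\begin{itemize}
\item $\varphi(\bw)=2$ if some variable of $\bw$ is sent to $2$, or if the total number of occurrences in $\bw$ of variables sent to $4$ is at least $2$;
\item otherwise $\varphi(\bw)=4$ if that number is exactly $1$;
\item otherwise $\varphi(\bw)=1$ if some variable of $\bw$ is sent to $1$;
\item otherwise $\varphi(\bw)=3$.
\end{itemize}
Since $2$ is the additive identity, only the words $\bu_j$ with $\varphi(\bu_j)\neq 2$ contribute to $\varphi(\bu)$. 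This formula drives both directions of the proof.

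\emph{Necessity.} I will exhibit a violating assignment for each failed condition, sending all variables outside $c(\bq)$ to $2$ so that only $D_\bq(\bu)$ survives.
\begin{itemize}
\item If $c(D_\bq(\bu))\subsetneq c(\bq)$, pick $x\in c(\bq)\setminus c(D_\bq(\bu))$. Send $x\mapsto 1$ and the rest of $c(\bq)$ to $3$. Then $\varphi(\bq)=1$, while $\varphi(\bu)\in\{2,3\}$.
\item If $\ell(\bq)=1$, say $\bq=x$, then by nontriviality $D_\bq(\bu)$ consists of powers $x^k$ with $k\ge2$. The map $x\mapsto4$ gives $\varphi(\bq)=4\not\le 2=\varphi(\bu)$.
\item If $M_1(\bq)\neq\emptyset$ and condition $(2)$ fails, take the witnessing $x\in M_1(\bq)$ and send $x\mapsto 4$. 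The remaining variables of $c(\bq)$ are then chosen in $\{1,3\}$ so that $\varphi(\bq)=4$. The aim is that every word of $D_\bq(\bu)$ evaluates to $2$ (when $m(x,\bu_j)\ge2$) or to something not above $4$.
\end{itemize}

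\emph{Sufficiency.} Assume $c(\bq)=c(D_\bq(\bu))$, $\ell(\bq)\ge2$, and $(1)$ or $(2)$. Fix $\varphi$ and split by the value of $\varphi(\bq)$ from the evaluation rule.
\begin{itemize}
\item If $\varphi(\bq)=2$, there is nothing to show.
\item If $\varphi(\bq)=3$, every variable of $c(\bq)$ is sent to $3$, so each word of $D_\bq(\bu)$ evaluates to $3$ and $\varphi(\bu)\ge 3$.
\item If $\varphi(\bq)=1$, all values on $c(\bq)$ lie in $\{1,3\}$ and some variable $y$ goes to $1$. Since $c(\bq)=c(D_\bq(\bu))$, some word of $D_\bq(\bu)$ contains $y$ and evaluates to $1$.
\item If $\varphi(\bq)=4$, exactly one occurrence in $\bq$ is of a variable $x$ sent to $4$, so $x\in M_1(\bq)$. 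This excludes $(1)$, so $(2)$ supplies $\bu_j\in D_\bq(\bu)$ with $m(x,\bu_j)=1$. Other variables of $\bu_j$ lie in $c(\bq)$ and hence take values in $\{1,3\}$, so $\varphi(\bu_j)=4$ and $\varphi(\bu)\ge4$.
\end{itemize}

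The main obstacle is the third necessity case. Words of $D_\bq(\bu)$ that omit $x$ evaluate to $1$ or $3$, and the sum $3+4=1$ lies above $4$. The violating assignment must therefore be chosen so that words omitting $x$ do not contribute. I would first try to use the precise definition of $M_1(\bq)$ (and of the content condition) to show that every word in $D_\bq(\bu)$ contains $x$. Failing that, I would refine the choice of values on $c(\bq)\setminus\{x\}$, for instance sending a second variable to $4$ in a way that kills the words without $x$ but leaves $\bq$ at $4$. This is the step where I expect most of the case analysis.
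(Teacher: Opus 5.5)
\textbf{There is a gap: the necessity of condition $(2)$ is left unproved, and both remedies you propose fail.}

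Your evaluation rule for words in $S_{(4,450)}$ is correct. The sufficiency argument is the same case split on $\varphi(\bq)\in\{2,3,1,4\}$ that the paper uses. Your direct assignments for $c(\bq)=c(D_\bq(\bu))$ and $\ell(\bq)\geq 2$ are also correct; the paper instead reads these off the subalgebras $\{1,2,3\}\cong M_2^0$ and $\{2,4\}\cong N_2$.

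The gap is the third necessity case. Here is why each fallback fails:
\begin{itemize}
\item Words of $D_\bq(\bu)$ need not contain $x$. Take $\bq=xy$ and $\bu=x^2y+y$: condition $(2)$ fails for $x$, yet the summand $y$ omits $x$.
\item Sending a second variable of $c(\bq)$ to $4$ forces $\varphi(\bq)=2$, because $\bq$ would then contain two occurrences of $4$-valued variables.
\end{itemize}

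The obstacle you describe does not arise. If $(2)$ fails for $x\in M_1(\bq)$, every $\bu_j\in D_\bq(\bu)$ that contains $x$ has $m(x,\bu_j)\geq 2$. So no summand can evaluate to $4$, and the relation $3+4=1$ never comes into play. The only real danger is a summand evaluating to $1$, and you avoid it by not using the value $1$ at all.

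So take the paper's assignment: $\varphi(x)=4$, $\varphi(y)=3$ for $y\in c(\bq)\setminus\{x\}$, and $\varphi(y)=2$ for $y\notin c(\bq)$. Then $\varphi(\bq)=4$. On the other side:
\begin{itemize}
\item summands outside $D_\bq(\bu)$ evaluate to $2$;
\item summands of $D_\bq(\bu)$ containing $x$ evaluate to $2$;
\item summands of $D_\bq(\bu)$ omitting $x$ evaluate to $3$.
\end{itemize}
Hence $\varphi(\bu)\in\{2,3\}$. Since $4\not\le 2$ and $4\not\le 3$, this assignment violates $\bq\preceq\bu$. In your example it gives $\varphi(\bu)=3$ while $\varphi(\bq)=4$.
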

\begin{proof}
Suppose first that $\bq\preceq\bu$ holds in $S_{(4,450)}$.
The subalgebras $\{1,2,3\}$ and $\{2,4\}$ are isomorphic to $M_2^0$ and $N_2$, respectively.
Hence $c(\bq)=c(D_{\bq}(\bu))$ and $\ell(\bq)\geq2$.
Assume that $M_1(\bq) \neq \emptyset$, and that there is $x\in M_1(\bq)$ such that for any $\bu_j \in D_\bq(\bu)$, $m(x,\bu_j) \neq1$.
Define a semiring homomorphism $\varphi\colon\mathcal{P}_f(X^+)\to S_{(4,450)}$ by
\[
\varphi(y)=
\begin{cases}
4, & y=x,\\
3, & y\in c(\bq)\setminus\{x\},\\
2, & y\notin c(\bq).
\end{cases}
\]
It is easy to see that $\varphi(\bu)=3$ or $2$ and $\varphi(\bq)=4$, a contradiction.
Hence for every $x \in M_1(\bq)$, there exists some $\bu_j \in \bu$ such that $m(x,\bu_j) = 1$.

Conversely, suppose that the stated conditions hold, and let
$\varphi\colon\mathcal{P}_f(X^+)\to S_{(4,450)}$ be an arbitrary
semiring homomorphism.

If $\varphi(\bq)=2$, then
$\varphi(\bq)\leq\varphi(\bu)$, since $2$ is the least element of the additive order.
If $\varphi(\bq)=3$, then every variable in $c(\bq)$ is mapped to $3$.
Since $c(\bq)=c(D_{\bq}(\bu))$, some summand in $D_{\bq}(\bu)$ is
mapped to $3$, and hence $\varphi(\bq)\leq\varphi(\bu)$.

If $\varphi(\bq)=1$, then every variable in $c(\bq)$ is mapped to either $1$ or $3$,
and at least one variable $x\in c(\bq)$ is mapped to $1$.
Since $c(\bq)=c(D_{\bq}(\bu))$, there exists some $\bu_j\in D_{\bq}(\bu)$ such that $x\in c(\bu_j)$.
All variables occurring in $\bu_j$ are mapped to either $1$ or $3$,
and therefore $\varphi(\bu_j)=1$.
Since $1$ is the greatest element of the additive order, it follows that $\varphi(\bu)=1$.

Finally, suppose that $\varphi(\bq)=4$. It follows that there is a variable $x\in M_1(\bq)$ such that $\varphi(x)=4$,
while every other variable in $c(\bq)$ is mapped to either $1$ or $3$.
By condition~(2), there exists $\bu_j\in D_{\bq}(\bu)$ such that $m(x,\bu_j)=1$.
This forces that $\varphi(\bu_j)=4$, and hence
\[
\varphi(\bq)=4\leq\varphi(\bu).
\]
Therefore $\bq\preceq\bu$ holds in $S_{(4,450)}$.
\end{proof}

We adopt the notation introduced in \cite{yrg}. In particular,
for each integer $n\geq1$, and let $0\leq k\leq n$. Define
\[
\bu_{n, k}=\prod_{i=1}^n x_i+\left(\sum_{i=1}^kx_ix_{n+1}\right)+\left(\sum_{i=k+1}^nx_ix_{n+1}y_i\right).
\]
For convenience, we sometimes factor $x_{n+1}$ out of the two sums and write
\[
\bu_{n, k}=\prod_{i=1}^n x_i+\left(\sum_{i=1}^kx_i+\sum_{i=k+1}^nx_iy_i\right)x_{n+1}.
\]
Recall that a word is \emph{linear} if no variable occurs in it more than once.
For later use, we recall the following properties of the terms
$\bu_{n,k}$ established in \cite[Lemma~3.1]{yrg}.

\begin{lem}\label{lem26011801}
Let $n\geq 1$ be an integer, and let $0\leq k\leq n$.
\begin{itemize}
\item[(a)] Every word in $\bu_{n,k}$ is linear.

\item[(b)] The content intersection of any two distinct words in $\bu_{n,k}$ is a singleton.

\item[(c)] No distinct words in $\bu_{n,k}$ share a common subword of length $2$.

\item[(d)] $\bu_{n,k}$ has no subterm that is the square of a term.

\item[(e)] If $n\geq 2$ and $\bp$ and $\bq $ are words in $\bu_{n,k}$ with $\bp \leq\bq$, then $\bp=\bq$.

\item[(f)] $x_1x_2\cdots x_n$ is the unique additive subterm of $\bu_{n,k}$
whose content is contained in $\{x_1, x_2, \ldots, x_n\}$.

\item[(g)] If $n\geq 4$, then $x_1x_2\cdots x_n$ is the unique word in $\bu_{n, k}$ of length at least $4$.

\item[(h)]
If $n\geq 3$, then
the term $\bu_{n, n}$ contains exactly one word, $x_1x_2\cdots x_n$, whose length is at least $3$.

\item[(i)]
If $n\geq 3$, then
$\bu_{n, n}$ does not have a subterm of the form $\bt_1\bt_2+\bt_2\bt_3+\bt_3\bt_1$.

\item[(j)]
$\bu_{n, 1}$ does not have a subterm of the form $\bt_1\bt_2+\bt_2\bt_3+\bt_3\bt_1$.

\item[(k)] If $\bt$ is an additive subterm of $\bu_{n, k}$ that can be expressed as
a product of three terms, then $\bt$ must be a word.

\item[(l)] If $\bt$ is an additive subterm of $\bu_{n,k}$ that can be written as the product of two terms $\bt_1$ and $\bt_2$,
then $\bt$ is itself a word, or at least one of $\bt_1$, $\bt_2$ is a single variable.
\end{itemize}
\end{lem}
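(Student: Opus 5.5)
The plan is to write out the words of $\bu_{n,k}$ explicitly and to derive every item from a few combinatorial facts about their contents. Set
\[
\bw_0=x_1x_2\cdots x_n,\qquad \bw_i=x_ix_{n+1}\ (1\le i\le k),\qquad \bw_i=x_ix_{n+1}y_i\ (k<i\le n).
\]
Then $\bu_{n,k}=\bw_0+\bw_1+\cdots+\bw_n$. Since we work in $X_c^+$, a subword of a linear word is determined by a subset of its content.

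\emph{The basic items (a)--(h).} Item (a) is immediate from the list of words. For (b) we have
\[
c(\bw_0)\cap c(\bw_i)=\{x_i\},\qquad c(\bw_i)\cap c(\bw_j)=\{x_{n+1}\}\quad (1\le i<j\le n).
\]
Item (c) follows from (a) and (b): a common length-$2$ subword of two distinct words would give two common variables. For (d), if $\bt^2\le\bu_{n,k}$, take any word $\bp\in\bt$. Then $\bp^2$ is a subword of some word of $\bu_{n,k}$, which contradicts linearity. For (e), a word $\bw_i$ with $i\ge1$ contains $x_{n+1}\notin c(\bw_0)$. So $\bw_i\not\le\bw_0$, and also $\bw_0\not\le\bw_i$, because $x_{n+1}\notin c(\bw_0)$ and $|c(\bw_0)\cap c(\bw_i)|=1<n$. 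Two words $\bw_i,\bw_j$ with $1\le i<j\le n$ cannot be comparable, since $x_i$ lies only in the first and $x_j$ only in the second. Item (f) holds because every $\bw_i$ with $i\ge1$ contains $x_{n+1}$. Items (g) and (h) are length counts: $\ell(\bw_i)\le3$ for $i\ge1$, and in $\bu_{n,n}$ we have $\ell(\bw_i)=2$.

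\emph{The factorization items (l) and (k).} Let $\bt=\bt_1\bt_2$ be an additive subterm that is not a word. Then some $\bt_j$, say $\bt_1$, contains two distinct words $\ba,\ba'$. For every $\bb\in\bt_2$, the words $\ba\bb$ and $\ba'\bb$ are distinct words of $\bu_{n,k}$, so (b) gives $c(\bb)\subseteq c(\ba\bb)\cap c(\ba'\bb)$, a singleton. Hence every word of $\bt_2$ is a variable. Suppose $\bt_2$ had two distinct variables $b,b'$. The same argument applied to $\ba b,\ba b'$ forces $\ba$ and $\ba'$ to be variables as well. Then $\ba b$ and $\ba' b'$ are two distinct words of $\bu_{n,k}$ with disjoint contents, which contradicts (b). Therefore $\bt_2$ is a single variable, which proves (l).

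For (k), write $\bt=\bt_1\bt_2\bt_3$ with $\bt$ not a word. Some $\bt_j$ contains two words $\ba,\ba'$. The product $\bp$ of the other two factors, taken along any choice of their words, is a word of length at least $2$, and $\bp$ is contained in the singleton $c(\ba\bp)\cap c(\ba'\bp)$. This is a contradiction.

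\emph{The triangle items (i) and (j).} Suppose $\bt_1\bt_2+\bt_2\bt_3+\bt_3\bt_1\le\bu_{n,k}$, and pick words $\ba\in\bt_1$, $\bb\in\bt_2$, $\bc\in\bt_3$. By (a), $\ba,\bb,\bc$ have pairwise disjoint contents, so $\ba\bb,\bb\bc,\bc\ba$ are pairwise distinct. By (b), $c(\bb)\subseteq c(\ba\bb)\cap c(\bb\bc)$ is a singleton, and likewise for $\ba$ and $\bc$. So $\ba,\bb,\bc$ are distinct variables, and $\bu_{n,k}$ would have to contain three length-$2$ words whose contents form a triangle.

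For $\bu_{n,n}$ with $n\ge3$, the length-$2$ words are exactly the $x_ix_{n+1}$, which all pass through $x_{n+1}$, and $\bw_0$ has length $n\ge3$. So no triangle exists. For $\bu_{n,1}$ there are at most two length-$2$ words: $x_1x_{n+1}$, and $\bw_0$ when $n=2$. So again no triangle exists.

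The main obstacle is making sure that (i), (j), (k), (l) correctly handle multi-word factors and degenerate coincidences such as $\ba\bb=\bb\bc$. The approach above rules these out using linearity together with the singleton-intersection property (b).
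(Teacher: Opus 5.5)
Your verification is correct, and it is the direct check from the definition of $\bu_{n,k}$ that the paper leaves to the reader (citing \cite{yrg}). One small slip: in (i) and (j) the subterm relation allows a multiplicative context $\bp$, which you tacitly drop. That case is immediate: for a word $\mathbf{d}\in\bp$, the distinct words $\mathbf{d}\mathbf{a}\mathbf{b}$ and $\mathbf{d}\mathbf{b}\mathbf{c}$ of $\bu_{n,k}$ would share the at least two variables of $c(\mathbf{d})\cup c(\mathbf{b})$, contradicting (b).
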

\begin{proof}
These assertions follow directly from the definition of
$\bu_{n,k}$; see \cite[Section~3]{yrg}.
\end{proof}

Let $\sigma_{n, n}$ denote the inequality $\bq_n \preceq \bu_{n, n}$, where
\[
\bq_n=\prod_{i=1}^{n+1}x_i.
\]
Write $\Omega$ for the set of all inequalities $\sigma_{n, n}$ with $n\geq 2$.
We shall use the following two results of Yue et al.~\cite{yrg}.
The first provides a sufficient condition for a commutative ai-semiring to be nonfinitely based.

\begin{thm}\label{thmNFB}
Let $S$ be a commutative ai-semiring
and $\Xi$ an equational basis for $S$ that contains an infinite subset of $\Omega$.
If $\bu_{m, m}$ is $\bt$-free for every inequality $\bs\preceq\bt$ in $\Xi\setminus \Omega$
and for every $m\geq 3$, then $S$ is nonfinitely based.
\end{thm}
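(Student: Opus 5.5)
We argue by contradiction, combining the compactness statement Lemma~\ref{el26012701} with the derivation chains of Lemma~\ref{lem02}. Suppose $S$ is finitely based. Since $\Xi$ is an equational basis, some finite subset $\Xi_0\subseteq\Xi$ already defines $\mathsf{V}(S)$. Because $\Xi_0$ is finite, there is an integer $N$ such that every $\sigma_{n,n}$ lying in $\Xi_0\cap\Omega$ satisfies $n<N$. Since $\Xi\cap\Omega$ is infinite, we may choose $m\geq\max\{N,3\}$ with $\sigma_{m,m}\in\Xi$. In particular, $\bq_m\preceq\bu_{m,m}$ holds in $S$, so it must be derivable from $\Xi_0$. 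We shall show that this is impossible.

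The derivation concerns the identity $\bu_{m,m}\approx\bu_{m,m}+\bq_m$. By Lemma~\ref{lem02}, there is a chain $\bu_{m,m}=\bt_1,\dots,\bt_k=\bu_{m,m}+\bq_m$ in which each step replaces a subterm $\varphi_i(\bs_i)$ by $\varphi_i(\bs_i')$. Each such pair comes from an inequality of $\Xi_0$ rewritten as an identity $\bt\approx\bt+\bs$. Consider the first step that changes the term, i.e.\ the first $i$ with $\bt_{i+1}\neq\bt_i=\bu_{m,m}$. Here $\varphi_i(\bs_i)$ or $\varphi_i(\bs_i')$ is a subterm of $\bu_{m,m}$, where the identity is $\bt\approx\bt+\bs$ for some $(\bs\preceq\bt)\in\Xi_0$. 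Both $\bt$ and $\bt+\bs$ have $\bt$ as an additive subterm, so $\varphi_i(\bt)$ is a subterm of $\bu_{m,m}$. If the inequality lies in $\Xi_0\setminus\Omega$, this contradicts the hypothesis that $\bu_{m,m}$ is $\bt$-free, using Lemma~\ref{lem26012701} for the case where $\varphi_i(\bt+\bs)$ is the relevant subterm. Hence the inequality must be some $\sigma_{n,n}$ with $2\leq n<N\leq m$, and $\varphi_i(\bu_{n,n})$ must be a subterm of $\bu_{m,m}$.

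The main obstacle is to show that no substitution instance of $\bu_{n,n}$ with $2\leq n<m$ is a subterm of $\bu_{m,m}$, or, failing that, that applying such an instance leaves the term unchanged. This requires a combinatorial analysis using Lemma~\ref{lem26011801}. Additive subterms of $\bu_{m,m}$ are subsets of its words. Under $\varphi_i$, the summand $x_1\cdots x_n$ maps to a product of $n\geq 2$ terms, so by parts (k) and (l) its image is essentially a word, and by (h) it must land in the long word $x_1\cdots x_m$ or in a length-two word $x_jx_{m+1}$. The summands $x_jx_{n+1}$ map to products of two terms. Parts (b) and (c) force all the $\varphi_i(x_{n+1})$ to coincide with a common variable shared by the images, namely $x_{m+1}$; with (d), no variable maps to a square or to a sum. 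Tracking contents then shows that $\varphi_i(x_1\cdots x_n)$ must be a subword of $x_1\cdots x_m$ of length at least $n$, while its contents also pair with $x_{m+1}$. The resulting replacement adds $\varphi_i(\bq_n)$, which contains $x_{m+1}$ together with part of the long word.

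Two sub-cases arise here, and this is the step needing most care. If $n<m$ the contents cannot cover all of $x_1,\dots,x_m$, so either $\varphi_i(\bu_{n,n})$ fails to be a subterm, or the new word is not $\bq_m$. In the latter case one must check that the new term still satisfies whatever invariant prevents $\bq_m$ from appearing. To handle this, I would choose an invariant that every term in the chain satisfies and that $\bu_{m,m}+\bq_m$ does not. A natural candidate is validity in a suitable test algebra, or the property that every word of length at least $m+1$ containing $x_{m+1}$ and all of $x_1,\dots,x_m$ is absent; one then verifies that applying any $\sigma_{n,n}$ with $n<m$, or any $\Xi_0\setminus\Omega$ rule, cannot produce such a word. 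Proving this invariant step by step along the chain, rather than only at the first step, is the main difficulty. This gives the contradiction, so $S$ is nonfinitely based by Lemma~\ref{el26012701}.
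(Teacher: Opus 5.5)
\textbf{Gap: the freeness of $\bu_{m,m}$ with respect to $\bu_{n,n}$ is never proved.} Your framework is right and matches the paper's (compare the proof of Theorem~\ref{thm02}, the $\Omega_1$-analogue). You pass to a finite $\Xi_0\subseteq\Xi$, choose $m$ with $\sigma_{m,m}\in\Xi\setminus\Xi_0$ larger than every $n$ with $\sigma_{n,n}\in\Xi_0$, and aim to show that no rule of $\Xi_0$ applies to $\bu_{m,m}$ at all. Your observation is correct: any step applied to $\bu_{m,m}$ via $\bt\approx\bt+\bs$, in either direction, makes $\varphi(\bt)$ a subterm of $\bu_{m,m}$, and this disposes of $\Xi_0\setminus\Omega$. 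The gap is the remaining case: you never show that $\bu_{m,m}$ is $\bu_{n,n}$-free for $2\le n<m$. This is exactly the ``in particular'' clause of Lemma~\ref{lemfree}, which you could have cited. Instead, your sketch asserts without justification that the common factor coming from $\varphi(x_{n+1})$ is $x_{m+1}$; a priori it could be some $x_j$ with $j\le m$. The sketch then ends in a spurious second alternative (``the new word is not $\bq_m$''), which you propose to handle by an unspecified invariant along the whole chain.

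\textbf{The invariant fallback cannot work.} The only information you have about the rules in $\Xi_0\setminus\Omega$ is that $\bu_{m,m}$ is free for their upper sides. Once a chain leaves $\bu_{m,m}$, nothing in the hypotheses controls which of those rules become applicable, so no invariant can be propagated. The argument must be that the chain cannot start. Freeness for every upper side in $\Xi_0$, together with Lemma~\ref{lem02}, forces $\bu_{m,m}=\bu_{m,m}+\bq_m$, which is false.

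\textbf{How to prove the freeness directly.} Suppose $\bp\varphi(\bu_{n,n})+\br=\bu_{m,m}$. Take $w\in\varphi(x_{n+1})$, $a\in\varphi(x_1)$, $b\in\varphi(x_2)$. Then $a\neq b$ by Lemma~\ref{lem26011801}(d). So for any word $p$ of $\bp$, the words $paw\neq pbw$ of $\bu_{m,m}$ share $c(pw)$, and Lemma~\ref{lem26011801}(a),(b) force $\bp$ to be empty and $w$ to be a single variable in $\{x_1,\ldots,x_{m+1}\}$. Two cases remain.
\begin{itemize}
\item If $w=x_{m+1}$, every $\varphi(x_i)$ with $i\le n$ is a sum of variables from $\{x_1,\ldots,x_m\}$. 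Hence the words of $\varphi(x_1\cdots x_n)$ have length $n$ and avoid $x_{m+1}$, so they equal $x_1\cdots x_m$, giving $n=m$.
\item If $w=x_j$ with $j\le m$, each such $aw$ is either $x_1\cdots x_m$ or $x_jx_{m+1}$. Two different indices giving the same word would produce a square in $\varphi(x_1\cdots x_n)$, which rules out $n\ge 3$. For $n=2$, the word $ab\in\varphi(x_1x_2)$ has length $m\ge 3$ and contains $x_{m+1}$, so it is not a word of $\bu_{m,m}$.
\end{itemize}
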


Following \cite{yrg}, for each integer $n\geq2$,
let $\Theta_n$ denote the set of all terms of the form $\sum_{1\leq i<j\leq n+1}x_ix_j\bw_{ij}$,
where $c(\bw_{ij})\subseteq\{x_1, x_2,\dots,x_{n+1}\}\backslash\{x_i, x_j\}$
and each $\bw_{ij}$ is either a linear word or the empty word.
Note that some words in $\Theta_n$ may be repeated.
For every term $\bt\in\Theta_n$, each word in $\bt$ is linear and has length at least $2$,
and for any $1\leq i<j\leq n+1$, $x_ix_j$ is a subterm of $\bt$.
Observe that $\bu_{n, n}$ lies in $\Theta_n$.

For $n \geq 2$ and $\bv\in \Theta_n$,
let $\delta_{n,\bv}$ denote the inequality $\bq_n \preceq \bv$.
Then $\sigma_{n,n}$ coincides with $\delta_{n,\bv}$ for some $\bv\in \Theta_n$.

\begin{lem}\label{lemfree}
Let $n\geq 2$ and $m\geq 3$ be integers.
If $\bv$ is a term in $\Theta_n$ different from $\bu_{m, m}$, then $\bu_{m,m}$ is $\bv$-free.
In particular, if $m\neq n$, then $\bu_{m,m}$ is $\bu_{n, n}$-free.
\end{lem}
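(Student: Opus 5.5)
The plan is to argue by contradiction. Suppose $\bv\in\Theta_n$, $\bv\neq\bu_{m,m}$, and some substitution $\varphi$ makes $\varphi(\bv)$ a subterm of $\bu_{m,m}$. Then
\[
\bu_{m,m}=\bp\,\varphi(\bv)\,\bq+\br
\]
for suitable $\bp,\bq,\br$. Since $\bv$ contains the three words $x_1x_2\bw_{12}$, $x_1x_3\bw_{13}$, $x_2x_3\bw_{23}$ (recall $n\geq 2$), every $\varphi(x_i)$ with $i\le n+1$ is a term whose summands occur, up to multiplication, inside words of $\bu_{m,m}$.

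\emph{First step: reduce to $\bp$ and $\bq$ empty.} If $\bp$ or $\bq$ is a nonempty word, then $\bp\varphi(\bv)\bq$ is an additive subterm of $\bu_{m,m}$ written as a product of two or three terms. By Lemma~\ref{lem26011801}(k),(l) it is then a word, unless one factor is a single variable. If it is a word, $\varphi(\bv)$ is a single word $\bw$, so all $\varphi(x_ix_j\bw_{ij})$ coincide. Since the words of $\bu_{m,m}$ are linear by (a), this forces every $x_ix_j\bw_{ij}$ to have content $\{x_1,\dots,x_{n+1}\}$. In other words $\bv$ collapses to the single word $\bq_n$, and the goal is to show this degenerate case cannot occur. \emph{This is the step I expect to be the main obstacle.} As written, $\Theta_n$ allows repeated words, and then $\bq_n$ itself lies in $\Theta_n$ and embeds as a subword of $x_1\cdots x_m$ when $m\geq n+1$. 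So I would first check whether the intended reading of $\Theta_n$ excludes the case where all summands coincide. I would add the hypothesis that $\bv$ has at least two distinct words if it is needed; under it, this case is immediately contradictory. The remaining subcase, where one factor is a single variable, is handled by the same linearity argument using (l).

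\emph{Second step: $\varphi(\bv)$ is an additive subterm with at least two words.} Now each $\varphi(x_ix_j\bw_{ij})$ is a sum of words of $\bu_{m,m}$. By (a) every $\varphi(x_i)$ consists of linear words with pairwise disjoint contents. Using (b), any two distinct words of $\bu_{m,m}$ meet in exactly one variable. Using (c), no length-$2$ subword is shared. Together these show that each $\varphi(x_i)$ is a single variable and that the map $x_i\mapsto\varphi(x_i)$ is injective. If all $\bw_{ij}$ were empty, $\varphi(\bv)$ would contain $\bt_1\bt_2+\bt_2\bt_3+\bt_3\bt_1$ with $\bt_i=\varphi(x_i)$, contradicting (i) for $m\geq 3$.

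\emph{Final step: identify $\bv$ with $\bu_{m,m}$.} In general, (h) says $\bu_{m,m}$ has exactly one word of length $\geq 3$, namely $x_1\cdots x_m$; every other word has the form $x_ix_{m+1}$. So at most one word of $\varphi(\bv)$ is long, and all others are images of length-$2$ words $x_ix_j$ with $\bw_{ij}$ empty. The triangle obstruction (i) then forces the pairs $\{x_i,x_j\}$ with empty $\bw_{ij}$ to form a star around one vertex, which is mapped to $x_{m+1}$. All pairs not in the star must be covered by the single long word, which maps into $x_1\cdots x_m$. Counting the pairs, this pins down $n=m$ and shows that $\bv$ is $\bu_{n,n}$ up to renaming of variables. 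That contradicts $\bv\neq\bu_{m,m}$. The ``in particular'' clause follows because $\bu_{n,n}\in\Theta_n$ and $\bu_{n,n}\neq\bu_{m,m}$ when $m\neq n$.
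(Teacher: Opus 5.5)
The paper does not prove this lemma; it is quoted from \cite{yrg}. The nearest in-paper argument is the proof of Proposition~\ref{pro02} for $\bu_{m,1}$, and your outline follows the same skeleton. Your suspicion about the degenerate case is correct. $\bq_n\in\Theta_n$, and it is a subword of $x_1\cdots x_m$ once $m\geq n+1$, so a non-degeneracy hypothesis is needed; Proposition~\ref{pro02} likewise assumes $\bq_n\preceq\bv$ is nontrivial.

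\textbf{The final inference fails.} What you actually derive is $n=m$ and that $\bv$ becomes $\bu_{m,m}$ after a permutation of $x_1,\dots,x_{m+1}$. Such permuted copies lie in $\Theta_m$ without being equal to $\bu_{m,m}$. For example, take $\bv=x_2x_3\cdots x_{m+1}+\sum_{j=2}^{m+1}x_1x_j$. It is in $\Theta_m$: choose $\bw_{1j}$ empty and $\bw_{ij}=\prod_{k\notin\{1,i,j\}}x_k$ for $2\leq i<j\leq m+1$. The transposition $x_1\leftrightarrow x_{m+1}$ maps it onto $\bu_{m,m}$, so $\bu_{m,m}$ is not $\bv$-free. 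Hence ``this contradicts $\bv\neq\bu_{m,m}$'' does not follow, and the statement is false as written in this second way too.

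What your argument proves is the lemma for those $\bv$ that have at least two distinct words and are not variable-permutations of $\bu_{m,m}$. That corrected version is what the application needs: the corresponding $\delta_{m,\bv}$ are renamings of $\sigma_{m,m}$ and must be removed from $\Xi$. The ``in particular'' clause survives, since you obtain $n=m$.

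\textbf{The second step needs an argument.} Facts (a)--(c) alone do not make each $\varphi(x_i)$ a variable; in the collapsed case $\varphi(x_1)$ can be a long word. You must use that $\varphi(\bv)$ has at least two words. Here is one way.

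Linearity gives $c(\varphi(x_a))\cap c(\varphi(x_b))=\emptyset$ for $a\neq b$.

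Suppose a word $\bs\in\varphi(x_a)$ has length at least $2$. It is a proper subword of a word of $\bu_{m,m}$, hence of $x_1\cdots x_m$. So for each $b\neq a$, $\varphi(x_b)\varphi(\bw_{ab})$ is the single word $\bs^{*}$ with $\bs\bs^{*}=x_1\cdots x_m$. Disjointness for two such indices forces $\ell(\bs^{*})\geq2$. Then every word of $\varphi(\bv)$ contains two variables from $\{x_1,\dots,x_m\}$, so $\varphi(\bv)=x_1\cdots x_m$. That is the collapsed case.

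Suppose instead $\varphi(x_a)$ contains distinct variables $y,y'$. For $m\geq3$, the only word $w$ with $yw$ and $y'w$ both in $\bu_{m,m}$ is $x_{m+1}$; there is none if $x_{m+1}\in\{y,y'\}$. So $\varphi(x_b)=x_{m+1}$ for at least two $b\neq a$, contradicting disjointness.

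\textbf{Two smaller corrections.} First, the case where $\bp$ is a single variable $z$ and $z\varphi(\bv)$ has several words is excluded by length, not linearity. Words of $\bu_{m,m}$ sharing a variable are either several $x_ix_{m+1}$, or $x_1\cdots x_m$ together with one $x_ix_{m+1}$. In both cases $\varphi(\bv)$ would contain a word of length $1$, which is impossible. Second, the star shape in your last step comes from injectivity together with the fact that every length-$2$ word of $\bu_{m,m}$ contains $x_{m+1}$. It does not come from (i), since triangle-free graphs need not be stars.
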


We now present an infinite equational basis for $\mathcal{P}(S_7, \cdot)$.
\begin{pro}\label{pro63401}
$\mathsf{V}(\mathcal{P}(S_7, \cdot))$
is the commutative ai-semiring variety defined by the identities
\begin{align}
&x^3 \approx x^2; \label{01}\\
&x_1^2x_2^2\cdots x_{n+1}^2 \preceq \sum_{i=1}^{n+1}x_i^2\bw_i\quad (n\geq1);\label{03}\\
&\bq_{n, k} \preceq \left(\sum_{i=1}^{k}x_i\bw_i\right)+\left(\sum_{i=1}^{k-1}x_i^2\bw_i'\right)+\left(\sum_{i=k+1}^{n+1}x_i^2\bw_i\right)
\quad (n\geq1, 1\leq k\leq n+1); \label{04}\\
&\bq_{n, k} \preceq \left(\sum_{i=1}^{k}x_i\bw_i\right)+\left(\sum_{i=1}^{\ell}x_i^2\bw_i'\right)
+\left(\sum_{\ell+1\leq i<j\leq k}x_ix_j\bw_{ij}\right)+\left(\sum_{i=k+1}^{n+1}x_i^2\bw_i\right),\label{05}
\end{align}
where,
\[
\bq_{n,k}
=
x_1x_2\cdots x_kx_{k+1}^2\cdots x_{n+1}^2.
\]
In \eqref{03}--\eqref{05}, all words $\bw_i$ and $\bw_i'$
that occur satisfy
\[
c(\bw_i),c(\bw_i')
 \subseteq
 \{x_1,\ldots,x_{n+1}\}\setminus\{x_i\},
\]
and may be empty. In \eqref{05}, each $\bw_{ij}$ satisfies
\[
c(\bw_{ij})
 \subseteq
 \{x_1,\ldots,x_{n+1}\}\setminus\{x_i,x_j\},
\]
and may also be empty.
In \eqref{05} $n\geq 1$, $1\leq k\leq n+1$ and $0\leq \ell< k-1$.
\end{pro}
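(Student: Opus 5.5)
The plan has two halves: soundness and completeness. For soundness, recall that $\mathsf{V}(\mathcal{P}(S_7,\cdot))=\mathsf{V}(S_{53}^0,S_{(4,450)})$. So it suffices to check that each of \eqref{01}--\eqref{05}, and commutativity $xy\approx yx$, holds in both $S_{53}^0$ and $S_{(4,450)}$. Commutativity and $x^3\approx x^2$ can be read off the Cayley tables. For the inequalities \eqref{03}--\eqref{05}, I would verify the combinatorial conditions of Corollary~\ref{cor530} and Lemma~\ref{lem45001} directly.

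In every case all summands of the upper side have content inside $c(\bq)=\{x_1,\dots,x_{n+1}\}$, so $D_\bq(\bu)=\bu$. The union of the contents is $c(\bq)$ because the summand indexed by $i$ contains $x_i$, and every summand has length $\geq 2$.

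\begin{itemize}
\item \emph{Condition on $S_2$ in Corollary~\ref{cor530}.} A length-$2$ subword of $\bq_{n,k}$ is either $x_ix_j$ or $x_i^2$ with $i>k$. In \eqref{03}, \eqref{04} and \eqref{05}, each $x_i^2$ with $i>k$ occurs as a summand factor. For $x_ix_j$, any subword $x_ix_{j'}$ or $x_i^2$ with content inside $\{x_i,x_j\}$ must be found. This works if some summand contains $x_i^2$ or $x_j^2$, or equals $x_ix_j\bw_{ij}$.
\item \emph{This is where $\ell<k-1$ matters in \eqref{05}.} It ensures that the remaining indices $\ell+1,\dots,k$ (at least two of them) are covered pairwise by the $x_ix_j\bw_{ij}$. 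The indices $\leq \ell$ are covered by $x_i^2\bw_i'$, and in \eqref{04} all of $1,\dots,k-1$ are covered by squares.
\item \emph{Conditions of Lemma~\ref{lem45001}.} Here $M_1(\bq)$ is presumably the set of variables occurring once in $\bq$, namely $\{x_1,\dots,x_k\}$. The summand $x_i\bw_i$ has $m(x_i,\cdot)=1$ since $x_i\notin c(\bw_i)$, and $\ell(\bq)\geq 2$ holds trivially.
\end{itemize}

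For completeness, let $\bq\preceq\bu$ be a nontrivial inequality holding in both factors, with $\bq,\bu_j$ commutative words. First reduce using \eqref{01} and commutativity, so that $\bq=x_1\cdots x_k x_{k+1}^2\cdots x_{n+1}^2$ and every word in $\bu$ has exponents at most $2$. Next, drop the summands of $\bu$ outside $D_\bq(\bu)$, since having the smaller right side suffices: $\bq\preceq D_\bq(\bu)$ implies $\bq\preceq\bu$. Now $c(\bq)=c(D_\bq(\bu))$.

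The strategy is then to select from $D_\bq(\bu)$ a subfamily matching the upper side of one of \eqref{03}--\eqref{05}, up to renaming indices.

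\begin{itemize}
\item For each variable occurring once in $\bq$ (the set $M_1(\bq)$), Lemma~\ref{lem45001} gives a summand $x_i\bw_i$ with $x_i\notin c(\bw_i)$.
\item For each $x_i$ with $i>k$, take any summand containing $x_i$ and write it as $x_i^{e}\bw_i$. If $e=1$ this is still fine, because $x_i\bw_i\cdot x_i$ can be absorbed.
\end{itemize}

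Actually, the cleanest route is to prove a slightly stronger derivation. From \eqref{01}--\eqref{05} one derives $\bq\preceq \bp$ for the selected subsum $\bp$, and hence $\bq\preceq\bu$. To handle summands where a variable $x_i$ with $i>k$ occurs only linearly, I would multiply $\bq$ by the appropriate squares and use $x^3\approx x^2$ to raise the exponents. Inequalities with linear occurrences of squared variables then reduce to \eqref{04} or \eqref{05} after renaming.

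The length-$2$ condition of Corollary~\ref{cor530} is applied to the subwords $x_ix_j$ with $i,j\leq k$. Let $\ell$ be the number of linear variables $x_i$ that admit a summand with $x_i^2$. The remaining linear variables must be pairwise covered by summands containing $x_ix_j$, which is exactly the shape of \eqref{05}. When all but at most one are covered by squares, we get \eqref{04}, and $k=0$ gives \eqref{03}. For the case $k=1$ one has to check that the single linear variable needs no pair condition.

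The main obstacle I expect is this combinatorial case analysis in the completeness direction. In particular, I must ensure the chosen summands really have the required form: for instance, a summand covering the pair $x_ix_j$ might contain $x_i^2$ rather than $x_i x_j$ linearly. Each such degenerate case has to be routed to the correct scheme, possibly by first deriving the auxiliary inequality $\bq\preceq \bq+\text{(summand)}$ and using transitivity. I would first try to organize this by induction on the number of summands of $D_\bq(\bu)$, at each stage peeling off a summand that is redundant modulo the already-derived schemes.
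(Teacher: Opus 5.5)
Your overall route is the paper's: soundness via Corollary~\ref{cor530} and Lemma~\ref{lem45001} on the two subdirect factors, and completeness by extracting from $D_\bq(\bu)$ a subsum shaped like an upper side of \eqref{03}, \eqref{04} or \eqref{05}. The case split depends on how many linear letters of $\bq$ have no summand containing their square.

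There is a small slip in soundness. Not every summand has length $\geq 2$, since $x_i\bw_i$ may be just $x_i$. Still, $L_{\geq2}\neq\emptyset$ holds: $n\geq1$ forces a square summand in \eqref{03} and \eqref{04}, and $\ell<k-1$ forces some $x_ix_j\bw_{ij}$ in \eqref{05}.

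\textbf{Gap 1: squared letters of $\bq$.} For $i>k$ you take \emph{any} summand containing $x_i$. If $x_i$ occurs there only once, you plan to ``multiply $\bq$ by squares''. This does nothing. Modulo \eqref{01}, multiplying $\bq$ by $x_i$ or $x_i^2$ leaves $\bq$ unchanged and does not touch the upper side. Nor can an upper-side summand $x_i\bw$ be upgraded to $x_i^2\bw$, because the variety does not satisfy $x^2y\preceq xy$ (it fails in $S_{53}^0$ at $x=2$, $y=3$). The missing observation is that this situation never arises. Apply the last condition of Corollary~\ref{cor530} to $x_i^2\in S_2(\bq)$. It gives $\bw'\in S_2(D_\bq(\bu))$ with $c(\bw')\subseteq\{x_i\}$, so $\bw'=x_i^2$. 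Hence $D_\bq(\bu)$ contains a summand $x_i^2\bp_i$ with $x_i\notin c(\bp_i)$ after \eqref{01}, which is exactly what \eqref{03}--\eqref{05} require.

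\textbf{Gap 2: the ``degenerate'' pair case.} Your worry that a summand covering $x_ix_j$ might contain $x_i^2$ is vacuous once the index set is chosen properly, so no induction on the number of summands is needed. Let $J$ be the set of $i\leq k$ such that no summand of $D_\bq(\bu)$ contains $x_i^2$. For distinct $i,j\in J$, the condition of Corollary~\ref{cor530} for $x_ix_j$ gives some $\bw'\in\{x_i^2,x_j^2,x_ix_j\}\cap S_2(D_\bq(\bu))$, hence $\bw'=x_ix_j$. The summand containing it then has the form $x_ix_j\bw_{ij}$ with $x_i,x_j\notin c(\bw_{ij})$, since otherwise it would contain $x_i^2$ or $x_j^2$.

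\textbf{Assembling the instance.} Collect the following summands: the $x_i\bw_i$ ($i\leq k$) from Lemma~\ref{lem45001}, the square summands for $i\leq k$ with $i\notin J$, and the $x_i^2\bp_i$ ($i>k$).
\begin{itemize}
\item If $k=0$, they form an instance of \eqref{03}.
\item If $|J|\leq 1$, they form an instance of \eqref{04}.
\item If $|J|\geq 2$, together with the $x_ix_j\bw_{ij}$ ($i,j\in J$) they form an instance of \eqref{05} with $\ell=k-|J|<k-1$.
\end{itemize}
With these two observations your argument becomes exactly the paper's Case~1 and Subcases~2.1--2.2.
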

\begin{proof}
It is straightforward to verify that $\mathcal{P}(S_7, \cdot)$ is isomorphic to a subdirect product $S_{53}^0$ and $S_{(4, 450)}$
via the congruences defined by the nontrivial blocks $\{\{1, 2, 3, 5\}, \{4, 7\}\}$ and $\{\{1, 4\}, \{2, 6\}, \{3, 7\}, \{5, 8\}\}$.
By Corollary~\ref{cor530} and Lemma~\ref{lem45001}, $\mathcal{P}(S_7, \cdot)$ satisfies the identities \eqref{01}--\eqref{05}.
It suffices to show that every inequality holding in $\mathcal{P}(S_7, \cdot)$ is derivable from \eqref{01}--\eqref{05}.
Consider such a nontrivial inequality $\bq\preceq \bu$, where
$\bu=\bu_1+\bu_2+\cdots+\bu_t$ and $\bu_i, \bq \in X_c^+$, $1 \leq i \leq t$.
By Corollary~\ref{cor530} and Lemma~\ref{lem45001},
$L_{\geq 2}(D_\bq(\bu))\neq \emptyset$, $c(\bq)=c(D_\bq(\bu))$, $\ell(\bq)\geq2$,
and for any $\bv\in S_2(\bq)$, there exists $\bv'\in S_2(D_\bq(\bu))$ such that $c(\bv')\subseteq c(\bv)$.

\textbf{Case 1.} $M_1(\bq)=\emptyset$. Applying identity \eqref{01}, we have
\[
\mathbf{q}=x_1^2x_2^2\cdots x_{n+1}^2.
\]
For any $\mathbf{v}\in S_2(\mathbf{q})$, there exists $\mathbf{v}'\in S_2(D_{\mathbf{q}}(\mathbf{u}))$ such that $c(\mathbf{v}')\subseteq c(\mathbf{v})$. Hence, for each $1\leq i\leq n+1$, there is $\bu_i\in D_\bq(\bu)$ such that $x_i^2\in S_2(\bu_i) $, and so
\[
\bu \succeq\bu_1+\cdots+\bu_{n+1}\stackrel{\eqref{01}}\approx x_1^2\bu_1'+\cdots+x_{n+1}^2\bu_{n+1}'\stackrel{\eqref{03}}\succeq x_1^2x_2^2\cdots x_{n+1}^2=\bq,
\]
which derives the inequality $\bu \succeq \bq$.

\textbf{Case 2.} $M_1(\bq)\neq\emptyset$. Applying identity \eqref{01}, we may write
\[
\bq=x_1x_2\cdots x_k x_{k+1}^2x_{k+2}^2\cdots x_{n+1}^2,
\]
for some $1\leq k\leq n+1$.
By Lemma~\ref{lem45001}, for each $x\in M_1(\bq)$, there exists $\bu_x\in D_{\bq}(\bu)$ such that $m(x,\bu_x)=1$.
In particular, for each $1\leq i\leq k$, choosing $x=x_i$ yields a summand $\bu_i\in D_{\bq}(\bu)$ with $m(x_i,\bu_i)=1$.
Moreover, for every $k+1\leq i\leq n+1$, since $x_i^2\in S_2(\bq)$, there exists a summand in $D_{\bq}(\bu)$ containing $x_i^2$.
Using \eqref{01}, we may write this summand as $x_i^2\bp_i$, where $c(\bp_i)\subseteq c(\bq)\setminus\{x_i\}$.

\textbf{Subcase 2.1.} For any $1\leq i<j\leq k$,
there is $\bv_{m} \in S_2(D_\bq(\bu))$ such that $\bv_{m}=x_i^2$ or $\bv_{m}=x_j^2$.
One can deduce that $\{i\mid 1\leq i\leq k, x_i^2\in S_2(D_\bq(\bu))\}$
contains at least $k-1$ elements.
We may assume that it contains $1, 2, \ldots, k-1$.
Then $x_1^2\bp_1, x_2^2\bp_2, \dots, x_{k-1}^2\bp_{k-1}\in D_\bq(\bu)$ for some $\bp_1, \bp_2, \ldots, \bp_{k-1} \in X^*$.
Since for each $x\in M_1(\bq)$, there exists $\bu_x\in D_{\bq}(\bu)$ such that $m(x,\bu_x)=1$,
it follows that there exists $x_1\bp_1',\dots, x_k\bp_k'\in D_\bq(\bu)$ with $x_i\not\in c(\bp_i')$.
Now we have
\begin{align*}
\bu
&\succeq \left(\sum_{i=1}^{k}x_i\bp_i'\right)+\left(\sum_{i=1}^{k-1}x_i^2\bp_i\right)+\left(\sum_{i=k+1}^{n+1}x_i^2\bp_i\right)\\
&\succeq x_1\cdots x_kx_{k+1}^2\cdots x_{n+1}^2, &&(\text{by}~\eqref{04})
\end{align*}
where $\bp_i, \bp'_i$ may be empty, $1\leq i \leq n+1$.
This implies $\bu \succeq \bq$.

\textbf{Subcase 2.2.} There exist $1\leq i<j\leq k$ such that
for every $\bv\in S_2(D_\bq(\bu))$, $\bv\neq x_{i}^2$ and $\bv\neq x_{j}^2$.
Consequently, $x_{i}x_{j} \in S_2(D_\bq(\bu))$.
Hence $x_{i}x_{j}\bp_{ij}\in D_\bq(\bu)$ for some $\bp_{ij}\in X^*$.

For convenience, we may assume that for some integer $\ell$ with $0\leq \ell<k-1$,
$x_i^2\in S_2(D_\bq(\bu))$ for every $1\leq i\leq \ell$, and $x_ix_j\in S_2(D_\bq(\bu))$ for every $\ell+1\leq i<j\leq k$.
Then $x_i^2\bp_i\in D_\bq(\bu)$ for some $\bp_i\in X^*$, $1\leq i\leq\ell$;
$x_ix_j\bp_{ij}\in D_\bq(\bu)$ for some $\bp_{ij}\in X^*$, $\ell+1\leq i<j\leq k$.
Combined with Lemma~\ref{lem45001}, we have that there is $\bu_j\in D_\bq(\bu)$ $(1\leq j\leq k)$ such that $m(x_j, \bu_j)=1$.
Suppose that $x_i\bp'_i \in D_\bq(\bu)$ for all $1\leq i\leq k$ and that $x_i\notin c(\bp'_i)$.
Now
\begin{align*}
\bu
&\succeq \sum_{i=1}^{k}x_i\bp'_i+\sum_{i=1}^{\ell}x_i^2\bp_i
+\sum_{\ell+1\leq i<j\leq k}x_ix_j\bp_{ij}+\sum_{i=k+1}^{n+1}x_i^2\bp_i\\
&\succeq x_1\cdots x_kx_{k+1}^2\cdots x_{n+1}^2=\bq, &&(\text{by}~\eqref{05})
\end{align*}
where $\bp'_i$, $\bp_i$ may be empty for every $1\leq i \leq n+1$,
and $\bp_{ij}$ is either empty or linear for every $\ell+1\leq i<j\leq k$.
This derives the inequality $\bu \succeq \bq$.
\end{proof}

\begin{thm}
The ai-semiring $\mathcal{P}(S_7, \cdot)$ is nonfinitely based.
\end{thm}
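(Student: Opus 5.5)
The plan is to apply Theorem~\ref{thmNFB} to $S=\mathcal{P}(S_7,\cdot)$. For $\Xi$ I take the equational basis \eqref{01}--\eqref{05} of Proposition~\ref{pro63401}, written as inequalities and with trivial ones discarded.

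\textbf{Step 1: $\Xi$ contains all of $\Omega$.} I will check that $\Xi$ contains every $\sigma_{n,n}$ with $n\geq2$. In \eqref{05} take $k=n+1$ and $\ell=0$, so that $\bq_{n,n+1}=\bq_n$. Then choose $\bw_i=x_{n+1}$ for $i\leq n$ and $\bw_{n+1}=x_1$. Let $\bw_{ij}$ be empty for $j=n+1$, and let $\bw_{ij}=\prod_{s\leq n,\,s\neq i,j}x_s$ for $i<j\leq n$. The upper side then collapses, as a set, to
\[
x_1\cdots x_n+\Bigl(\sum_{i=1}^n x_i\Bigr)x_{n+1}=\bu_{n,n}.
\]
So $\Omega\subseteq\Xi$. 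It remains to show that $\bu_{m,m}$ is $\bt$-free for every $\bs\preceq\bt$ in $\Xi\setminus\Omega$ and every $m\geq3$.

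\textbf{Step 2: upper sides containing a square.} Every upper side of \eqref{01} and \eqref{03}, every upper side of \eqref{04}, and every upper side of \eqref{05} with $\ell\geq1$ or $k\leq n$ has a summand of the form $x^2\bw$.
\begin{itemize}
\item For \eqref{04} with $k=n+1$, the sum $\sum_{i\leq n}x_i^2\bw_i'$ is nonempty because $n\geq1$.
\item For \eqref{04} with $k\le n$, and for \eqref{05} with $k\le n$, the sum $\sum_{i>k}x_i^2\bw_i$ supplies the square.
\item For \eqref{05} with $\ell\ge1$, the sum $\sum_{i\leq\ell}x_i^2\bw_i'$ supplies it.
\end{itemize}
Suppose some substitution $\varphi$ gave $\varphi(\bt)\leq\bu_{m,m}$. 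Then $\varphi(x)^2\varphi(\bw)$ would be a subterm of $\bu_{m,m}$. Since $\leq$ is compatible with multiplication in $P_f(X_c^+)$ and is transitive, $\varphi(x)^2$ would also be a subterm of $\bu_{m,m}$. This contradicts Lemma~\ref{lem26011801}(d). The same argument applies whenever some $\bw_i$ or $\bw_{ij}$ in the remaining family is nonlinear, since it then contains a square.

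\textbf{Step 3: the remaining family.} What is left is \eqref{05} with $k=n+1$, $\ell=0$ and all $\bw_i,\bw_{ij}$ linear, that is
\[
\bq_n\preceq\bt=\sum_{i=1}^{n+1}x_i\bw_i+\sum_{i<j}x_ix_j\bw_{ij}.
\]
\begin{itemize}
\item \emph{The case $n\geq2$ with every $\bw_i$ nonempty.} Here $\bt':=\sum_{i<j}x_ix_j\bw_{ij}$ lies in $\Theta_n$ and is an additive subterm of $\bt$. If $\bt'\neq\bu_{m,m}$, Lemma~\ref{lemfree} gives that $\bu_{m,m}$ is $\bt'$-free, and Lemma~\ref{lem26012701} transfers this to $\bt$. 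If $\bt'=\bu_{m,m}$ and $\bt\neq\bt'$, then $\bt$ has an extra summand $x_i\bw_i$, and $\varphi(\bt)\leq\bu_{m,m}$ would force $\varphi(\bt')$ to be a proper additive part of a subterm of $\bu_{m,m}$ of the same shape. Lemma~\ref{lem26011801}(b),(c),(e) should exclude this. If $\bt=\bu_{m,m}$, the inequality is $\sigma_{m,m}\in\Omega$.
\item \emph{The case where some $\bw_i$ is empty, or $n=1$.} Now $\bt$ has a single-variable summand $x_i$ alongside summands $x_ix_j\bw_{ij}$. If $\varphi(\bt)\leq\bu_{m,m}$, then $\varphi(x_i)$ and $\varphi(x_i)\varphi(x_j)\varphi(\bw_{ij})$ would both be additive pieces of one subterm $\bp\,\varphi(\bt)\,\bq$ of $\bu_{m,m}$. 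I expect to derive a contradiction from Lemma~\ref{lem26011801}(b),(e),(l): distinct words of $\bu_{m,m}$ meet in exactly one variable and no word is a proper subword of another, so both pieces cannot be multiplied by the same factors into words of $\bu_{m,m}$.
\end{itemize}

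\textbf{Main obstacle.} The hard part is this last case analysis, where a single variable occurs in the upper side, together with the subcase $\bt\supsetneq\bu_{m,m}$. These are not covered by Lemma~\ref{lemfree}, so they need a direct combinatorial argument on subterms of $\bu_{m,m}$. If that argument breaks down, the fallback is to replace these inequalities in $\Xi$ by equivalent ones, derived via \eqref{01}, whose upper sides lie in $\Theta_n$ or contain a square. Once freeness holds for all of $\Xi\setminus\Omega$, Theorem~\ref{thmNFB} yields that $\mathcal{P}(S_7,\cdot)$ is nonfinitely based.
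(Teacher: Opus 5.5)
Your strategy matches the paper's. You take the basis of Proposition~\ref{pro63401}, observe that it contains $\Omega$, and verify the freeness hypothesis of Theorem~\ref{thmNFB}: Lemma~\ref{lem26011801}(d) handles upper sides containing a square, and Lemma~\ref{lemfree} handles upper sides in $\Theta_n$.

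The paper's own proof sends every instance of \eqref{05} other than the $\delta_{n,\bv}$ to Lemma~\ref{lem26011801}(d). That fails for the instances with $k=n+1$, $\ell=0$ and all $\bw_i,\bw_{ij}$ linear. For example, with $n=2$ the upper side $x_1+x_1x_2+x_1x_3+x_2x_3$ contains no square and is not in $\Theta_2$. So your Step~3 is genuinely needed, and its main reduction is sound: $\bt'=\sum_{i<j}x_ix_j\bw_{ij}$ lies in $\Theta_n$ and is an additive subterm of $\bt$. Lemmas~\ref{lemfree} and~\ref{lem26012701} therefore dispose of every case with $\bt'\neq\bu_{m,m}$, whether or not some $\bw_i$ is empty. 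Your Lemma~\ref{lem26011801}(e) argument for a single-variable summand is also correct. For $n=1$ there is nothing to do: $\bw_{12}$ is forced to be empty, so every instance of \eqref{05} is trivial.

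\textbf{The unproved step.} You have not proved the subcase $\bt'=\bu_{m,m}\subsetneq\bt$ (so $n=m$). Lemma~\ref{lem26011801}(b),(c),(e) alone do not determine $\varphi$; the missing ingredient is the length information in Lemma~\ref{lem26011801}(h). Here is the argument.
\begin{enumerate}
\item Suppose $\bp\varphi(\bt)+\br=\bu_{m,m}$. Every word of $\bp\varphi(x_1\cdots x_m)$ has length at least $m\geq3$, so by (h) we get $\bp\varphi(x_1\cdots x_m)=x_1\cdots x_m$.
\item This forces $\bp$ to be empty and $\varphi$ to permute $\{x_1,\dots,x_m\}$.
\item Since $\varphi(x_i)\varphi(x_{m+1})\subseteq\bu_{m,m}$ for every $i\leq m$, each word of $\varphi(x_{m+1})$ equals $x_{m+1}$.
\item Such a $\varphi$ maps $\bu_{m,m}$ onto itself. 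It also sends every word over $x_1,\dots,x_{m+1}$ outside $\bu_{m,m}$ to a word outside $\bu_{m,m}$.
\item Hence the extra summand of $\bt$, whether a variable or a longer word, cannot land in $\bu_{m,m}$.
\end{enumerate}
With this, no fallback is needed.

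\textbf{A convention to state.} You, and equally the paper, need to make one convention explicit. $\Theta_m$ contains renamed copies $\pi(\bu_{m,m})$, for instance with $x_1$ and $x_{m+1}$ swapped, and $\bu_{m,m}$ is not free of these. So ``different from $\bu_{m,m}$'' in Lemma~\ref{lemfree} must mean ``not a renaming of $\bu_{m,m}$''. The inequalities $\bq_m\preceq\pi(\bu_{m,m})$ follow from $\sigma_{m,m}$ because $\pi(\bq_m)=\bq_m$, so they should be dropped from $\Xi$ or counted in $\Omega$. The argument above then covers $\pi(\bu_{m,m})\subsetneq\bt$ after composing with $\pi$.
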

\begin{proof}
By Proposition~\ref{pro63401},
\[
\Xi=\{\eqref{01}, \eqref{03}, \eqref{04}, \eqref{05}\setminus\delta_{n, \bv}\}
\cup\{\delta_{n, \bv} \mid n\geq 2, \bv\in \Theta_n\}
\]
is an equational basis for the commutative ai-semiring $\mathcal{P}(S_7, \cdot)$, which contains $\Omega$.
By Theorem~\ref{thmNFB}, to show that $\mathcal{P}(S_7, \cdot)$ is nonfinitely based,
it suffices to prove that for any $m\geq 3$, $\bu_{m, m}$ is $\bw$-free for every term $\bw$ in the sets
\[
\Gamma=\{\bt \mid \bt ~\text{is an upper side of an inequality in}~\{\eqref{01}, \eqref{03}, \eqref{04}, \eqref{05}\setminus\delta_{n, \bv}\}
\]
and
\[
\{\bv \mid \bv\in \Theta_n\setminus \{\bu_{n,n}\}, n\geq 2\}.
\]
Indeed, the required freeness follows from Lemma~\ref{lem26011801}(d) for $\mathbf{w} \in\Gamma$
and from Lemma~\ref{lemfree} for $\mathbf{w}\in \{\bv \mid \bv\in \Theta_n\setminus  \{\bu_{n,n}\}, n\geq 2\}$.
\end{proof}

\section{Equational basis for $\mathcal{P}^+(S_7, \cdot)$}\label{sec4}
In this section, we give a sufficient condition for an ai-semiring to be nonfinitely based.
Applying this condition, we prove that $\mathcal{P}^+(S_7, \cdot)$ likewise has no finite equational basis.
We continue to use the notation of Section~\ref{sec3}.
In particular, for each integer $n \geq 1$,
\[
\mathbf{u}_{n, 1}=\prod_{i=1}^n x_i+x_1x_{n+1}+\sum_{i=2}^n x_iy_ix_{n+1}.
\]

\begin{pro}\label{pro01}
Let $n \geq 1$ be an integer and $1\leq k \leq n$.
Then $\mathbf{u}_{m, 1}$ is $\mathbf{u}_{n, k}$-free for every $m \geq 4$.
In particular, when $n = m$ we require $k \geq 2$.
\end{pro}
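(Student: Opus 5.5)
The plan is to argue by contradiction. Suppose $m\geq 4$, and suppose $\varphi(\bu_{n,k})\leq \bu_{m,1}$ for some substitution $\varphi$, with $(n,k)\neq(m,1)$. Since everything is commutative, this means $\bu_{m,1}=\bp\,\varphi(\bu_{n,k})+\br$ for a (possibly empty) word or term $\bp$ and a (possibly empty) term $\br$. Thus $\bt:=\bp\,\varphi(\bu_{n,k})$ is an additive subterm of $\bu_{m,1}$. I will use Lemma~\ref{lem26011801}(a)--(g),(k),(l), applied to $\bu_{m,1}$, to force $\varphi$ to be very rigid. I will then compare the resulting images with the explicit shape of $\bu_{m,1}$.

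\textbf{Step 1: every image is a word.} The summand $\bp\,\varphi(x_1)\varphi(x_{n+1})$, or $\bp\,\varphi(x_1)\varphi(x_{n+1})\varphi(y_1)$ when $k=0$, is an additive subterm of $\bu_{m,1}$. The same holds for $\bp\,\varphi(x_1\cdots x_n)$.
\begin{itemize}
\item[(i)] I first dispose of the case $n=1$ by a subword argument. If $\bw$ is any word of $\varphi(x_1)$, then $\bp\bw$ and $\bp\bw\bw'$, with $\bw'$ a word of $\varphi(x_2)$ or of $\varphi(x_2y_1)$, are both words of $\bu_{m,1}$. One is a proper subword of the other, which contradicts Lemma~\ref{lem26011801}(e).
\item[(ii)] For $n\geq 2$, every summand $\bp\,\varphi(x_i)\varphi(x_{n+1})(\cdots)$ is a product of at least three terms, or of two terms by (l). Lemma~\ref{lem26011801}(k),(l) then shows that these summands are words. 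The escape clause of (l), where a factor is a single variable, still yields words once the remaining factors are handled in the same way.
\item[(iii)] Hence $\bp$, $\varphi(x_i)$, $\varphi(x_{n+1})$ and $\varphi(y_i)$ are all words, by the cancellation remark in the preliminaries: if $\bu\bv$ is a word then both $\bu$ and $\bv$ are words.
\end{itemize}
Consequently $\bt$ consists of at most $n+1$ words of $\bu_{m,1}$.

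\textbf{Step 2: rigidity.} Any two distinct words of $\bt$ that share the factor $\bp\varphi(x_{n+1})$, or the factor $\bp\varphi(x_i)$, are distinct words of $\bu_{m,1}$. By Lemma~\ref{lem26011801}(b),(c), they meet in a single variable and share no length-$2$ subword. This forces $\bp$ to be empty and $\varphi(x_{n+1})$ to be a single variable. Words can coincide only if two summands of $\bu_{n,k}$ have equal images; I will rule this out using linearity, Lemma~\ref{lem26011801}(a), of the image of $x_1\cdots x_n$.
\begin{itemize}
\item[(i)] The words containing $\varphi(x_{n+1})$ pairwise intersect exactly in it. In $\bu_{m,1}$, the only variable common to several short words is $x_{m+1}$, so $\varphi(x_{n+1})=x_{m+1}$.
\item[(ii)] The word $\varphi(x_1\cdots x_n)$ avoids $x_{m+1}$. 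By Lemma~\ref{lem26011801}(f),(g), it must be $x_1x_2\cdots x_m$; for $n\leq 3$ I will check directly that it cannot be a short word $x_iy_ix_{m+1}$ or $x_1x_{m+1}$.
\item[(iii)] By linearity, the words $\varphi(x_1),\dots,\varphi(x_n)$ are nonempty and pairwise disjoint, and they partition $\{x_1,\dots,x_m\}$. In particular $n\leq m$.
\end{itemize}

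\textbf{Step 3: counting.} Each short word of $\bu_{m,1}$ contains exactly one of $x_1,\dots,x_m$. Therefore each image $\varphi(x_i)x_{m+1}(\varphi(y_i))$ forces $\varphi(x_i)$ to be a single variable. It follows that $n=m$ and that $\varphi$ permutes $x_1,\dots,x_m$. Now suppose $k\geq 2$. Then $\varphi(x_1)x_{m+1}$ and $\varphi(x_2)x_{m+1}$ are length-$2$ words of $\bu_{m,1}$ containing $x_{m+1}$. The only such word is $x_1x_{m+1}$, so $\varphi(x_1)=\varphi(x_2)$, which contradicts disjointness. This leaves only $n=m$, $k=1$, which is exactly the excluded case.

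The main obstacle I expect is Step 1 for small $n$ (namely $n=2,3$), together with the escape clause of Lemma~\ref{lem26011801}(l). There, Lemma~\ref{lem26011801}(k),(g) do not apply directly, and one factor may a priori be a genuine sum. I would handle it by the same device as in the case $n=1$: pick a word from each possibly non-word factor, and derive either a proper subword relation, contradicting (e), or two distinct words sharing a length-$2$ subword, contradicting (c).
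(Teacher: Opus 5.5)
Your strategy is viable and more uniform than the paper's split into $n>m$, $n=m$, $n<m$. It runs: force all images to be words, $\bp$ to be empty and $\varphi(x_{n+1})$ to be a single variable; then pin down $\varphi(x_{n+1})=x_{m+1}$ and $\varphi(x_1\cdots x_n)=x_1x_2\cdots x_m$, and count.

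However, the step that carries the real content, Step~2(i), is not justified as written. You conclude $\varphi(x_{n+1})=x_{m+1}$ because $x_{m+1}$ is the only variable shared by several \emph{short} words of $\bu_{m,1}$. You never show that the words containing $\varphi(x_{n+1})$ are short, and one of them may be $x_1x_2\cdots x_m$. For example, $\varphi(x_{n+1})=x_1$ with images $x_1x_2\cdots x_m$ and $x_1x_{m+1}$ is perfectly compatible with Lemma~\ref{lem26011801}(b),(c). So is $\varphi(x_{n+1})=x_p$ with images $x_1x_2\cdots x_m$ and $x_py_px_{m+1}$. These are exactly the configurations the paper has to eliminate in Subcase~1.1 of its case $n<m$, so they cannot be passed over.

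For $n\geq 3$ you can repair this by counting: among $n\ge 3$ pairwise distinct images at most one is the long word. That requires pairwise distinctness, and linearity of $\varphi(x_1\cdots x_n)$ alone does not give $\varphi(x_iy_ix_{n+1})\neq\varphi(x_jy_jx_{n+1})$ for $k<i<j$. The swap $\varphi(x_i)=\varphi(y_j)$, $\varphi(x_j)=\varphi(y_i)$ is consistent with disjointness. You must apply Lemma~\ref{lem26011801}(c) to $\varphi(x_1\cdots x_n)$ and $\varphi(x_iy_ix_{n+1})$, which would share the subword $\varphi(x_ix_j)$ of length at least $2$, and then finish with a square.

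For $n=2$ there are only two images, so counting gives nothing. The case $k=2$ falls at once to Lemma~\ref{lem26011801}(j), but $k=1$ needs an explicit chase. Suppose $z=\varphi(x_3)\neq x_{m+1}$. The two distinct images both contain $z$, and a variable other than $x_{m+1}$ lies in at most two words of $\bu_{m,1}$, one of which is $x_1\cdots x_m$. Hence one image is the long word and $z\in\{x_1,\dots,x_m\}$. By (d), $\varphi(x_1x_2)$ avoids $z$, so it is a short word, and some $\varphi(x_j)$ contains $x_{m+1}$. Checking $z=x_1$ and $z=x_p$ separately then gives a contradiction.

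Likewise, the device you promise for the escape clause of (l) does work. It needs two things beyond (c) and (e): Lemma~\ref{lem26011801}(d), to get $\varphi(x_1)\neq\varphi(x_2)$, and the fact that $\bu_{m,1}$ has only one word of length $2$. It still has to be carried out rather than asserted.
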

\begin{proof}
Suppose for contradiction that $\bu_{m, 1}$ is not $\bu_{n, k}$-free for some $m \geq 4$, $n\geq1$ and $1\leq k\leq n$.
Then there exist terms $\bp, \br\in P_f(X_c^+)$ and a substitution $\varphi$ such that
\begin{equation}\label{26082503}
\bp\varphi(\bu_{n,k})+\br=\bu_{m, 1},
\end{equation}
where $\bp$ may be the empty word, and $\br$ may be the empty set.

If $n>m$, then $n\geq 5$, and so every word in $\bp\varphi(x_1x_2\cdots x_n)$ has length at least $5$.
By Lemma~\ref{lem26011801}(g), $\bp\varphi(x_1x_2\cdots x_n)=x_1x_2\cdots x_m$.
Thus $n\leq m$, contradicting $n>m$.

If $n=m$, then $k \geq 2$ and $n\geq4$, and so $\bp\varphi(x_1x_2\cdots x_n)=x_1x_2\cdots x_m$.
This forces each $\varphi(x_i)$ to be a single variable and $\bp$ is the empty word.
Moreover, the variables $\varphi(x_1),\ldots,\varphi(x_n)$ are pairwise distinct.
Choose any word $\bw\in\varphi(x_{n+1})$. Since $k\geq2$, both
\[
\varphi(x_1)\bw
\quad\text{and}\quad
\varphi(x_2)\bw
\]
occur in $\bu_{m,1}$. These two words are distinct. If
$\ell(\bw)\geq2$, then they share the common subword $\bw$ of
length at least $2$, contradicting Lemma~\ref{lem26011801}(c).
If $\ell(\bw)=1$, then they are two distinct words of length $2$,
whereas $\bu_{m,1}$ contains only one word of length $2$, namely
$x_1x_{m+1}$. This is again a contradiction.

Now suppose that $n<m$.
Our first goal is to extract enough information about $\bp$, $\varphi$, $n$ and $k$ from \eqref{26082503}.
\begin{claim}\label{claim1}
$n\geq2$.
\end{claim}
\begin{proof}[Proof of Claim $\ref{claim1}$.]
If $n=1$, then $k=1$, and so $\bu_{n,1}=x_1+x_1x_2$.
Consequently,
\[
\bu_{m,1}=\bp\varphi(\bu_{1,1})+\br=\bp\varphi(x_1+x_1x_2)+\br=\bp\varphi(x_1)+\bp\varphi(x_1)\varphi(x_2)+\br.
\]
So $\bu_{m,1}$ would contain two distinct words in which one is a proper subword of the other.
This contradicts Lemma~\ref{lem26011801}(e).
Thus $n\geq2$.
\end{proof}

\begin{claim}\label{claim2}
The sets $c(\varphi(x_1)), c(\varphi(x_2)), \ldots, c(\varphi(x_n))$ are pairwise disjoint.
\end{claim}
\begin{proof}[Proof of Claim $\ref{claim2}$.]
Assume that for some distinct indices $i, j\in\{1, 2, \ldots, n\}$
the intersection $c(\varphi(x_i))\cap c(\varphi(x_j))$ is nonempty.
Then $\varphi(x_1x_2\cdots x_n)$ would have a subterm that is the square of a variable,
and subsequently $\bu_{m,1}$ would also have such a subterm,
contradicting Lemma~\ref{lem26011801}(d).
\end{proof}

\begin{claim}\label{claim3}
The term $\bp$ is a word.
\end{claim}
\begin{proof}[Proof of Claim $\ref{claim3}$.]
Suppose that $\bp_1$ and $\bp_2$ are distinct words in $\bp$.
By \eqref{26082503}, both $\bp_1\varphi(x_1x_2\cdots x_n)$ and $\bp_2\varphi(x_1x_2\cdots x_n)$ are contained in $\bu_{m,1}$.
Claim~\ref{claim1} gives $n\geq2$.
Consequently, $\bu_{m,1}$ would contain two distinct words sharing a common subword of length $2$,
which contradicts Lemma~\ref{lem26011801}$(c)$.
\end{proof}

\begin{claim}\label{claim4}
$\displaystyle \bp\varphi\left(\left(\sum_{i=1}^kx_i
               +\sum_{i=k+1}^nx_iy_i\right)x_{n+1}\right)\neq x_1x_2\cdots x_m$.
\end{claim}
\begin{proof}[Proof of Claim $\ref{claim4}$.]
Suppose for contradiction that
\[
\bp\varphi\left(\left(\sum_{i=1}^kx_i
               +\sum_{i=k+1}^nx_iy_i\right)x_{n+1}\right)= x_1x_2\cdots x_m.
\]
If $\bp$ is nonempty, then we may assume that
\[
\bp=x_1x_2\cdots x_t
\]
and
\[
\varphi\left(\left(\sum_{i=1}^kx_i
               +\sum_{i=k+1}^nx_iy_i\right)x_{n+1}\right)=x_{t+1}\cdots x_m
\]
for some $1\leq t<m$.
This implies that $x_j\leq\varphi(x_{n+1})$ for some $t+1\leq j\leq m$,
and that $c(\bp\varphi(x_1\cdots x_n))\subseteq\{x_1, x_2, \dots, x_m\}$.
By \eqref{26082503} and Lemma~\ref{lem26011801}$(f)$, one can deduce that $\bp\varphi(x_1\cdots x_n)=x_1x_2\cdots x_m$,
whence $\varphi(x_1\cdots x_n)=x_{t+1}\cdots x_m$.
Thus $x_j\leq\varphi(x_r)$ for some $1\leq r\leq n$,
and so $x_j^2\leq\varphi(x_rx_{n+1})\leq \bu_{m, 1}$,
contradicting Lemma~\ref{lem26011801}(d).

If $\bp$ is empty, then a similar argument leads to the same contradiction.
\end{proof}

\begin{claim}\label{claim5}
The word $\bp$ is either empty or a single variable.
\end{claim}
\begin{proof}[Proof of Claim $\ref{claim5}$.]
By Claim~\ref{claim3}, $\bp$ is a word.
If its length exceeds $1$, then by \eqref{26082503} and Lemma~\ref{lem26011801}(g),
\[
\bp\varphi\left(\left(\sum_{i=1}^kx_i+\sum_{i=k+1}^nx_iy_i\right)x_{n+1}\right)=x_1x_2\cdots x_m.
\]
This contradicts Claim~\ref{claim4}.
Thus the length of $\bp$ is at most $1$;
therefore $\bp$ is either empty or a single variable.
\end{proof}

%\begin{claim}\label{claim021}
%If $\bp$ is not the empty word, then $k\geq 1$.
%\end{claim}
%\begin{proof}[Proof of Claim $\ref{claim021}$.]
%Suppose that $k=0$.
%Then for every $1\leq i\leq n$, by Claim~\ref{claim011}, $\bp\varphi(x_iy_ix_{n+1})=x_1x_2\cdots x_m$. Consequently,
%\[
%\bp\varphi\left(\left(\sum_{1\leq i\leq n}x_iy_i\right)x_{n+1}\right)=x_1x_2\cdots x_m,
%\]
%which contradicts Claim~\ref{claim02}.
%Therefore, $k\geq 1$.
%\end{proof}

\begin{claim}\label{claim6}
The word $\bp$ is empty.
\end{claim}
\begin{proof}[Proof of Claim $\ref{claim6}$.]
Suppose for contradiction that $\bp$ is nonempty.
By Claim~\ref{claim5}, $\bp$ is then a single variable.
Claim~\ref{claim1} tells us that $n\geq 2$.

Observe that every word in $\bp\varphi(x_iy_ix_{n+1})$ has length at least $4$ for every $k+1\leq i\leq n$.
By Lemma~\ref{lem26011801}(g),
\begin{equation}\label{26082504}
\bp\varphi(x_iy_ix_{n+1})=x_1x_2\cdots x_m \quad (k+1\leq i\leq n).
\end{equation}

If $k\geq 2$, then by Lemma~\ref{lem26011801}(c) and Lemma~\ref{lem26011801}(k),
$\bp\varphi(x_1x_{n+1})$ and $\bp\varphi(x_2x_{n+1})$ must be the same word.
Hence $\varphi(x_1)=\varphi(x_2)$, and so the square $\varphi(x_1)\varphi(x_1)$
is a subterm of $\bu_{m, 1}$, which contradicts Lemma~\ref{lem26011801}(d).

If $k=1$, then by \eqref{26082504}, $\bp\varphi(x_iy_ix_{n+1})=x_1x_2\cdots x_m$ for every $2\leq i\leq n$.
So $\bp=x_j$ for some $1\leq j \leq m$.
Since $\bp\varphi(x_1x_{n+1})$ and $\bp\varphi(x_2y_2x_{n+1})$ share a common subword of length $2$,
it follows from Lemma~\ref{lem26011801}(c) that $\bp\varphi(x_1x_{n+1})=\bp\varphi(x_2y_2x_{n+1})$.
Consequently,
\[
\bp\varphi\left(\left(x_1+\sum_{i=2}^nx_iy_i\right)x_{n+1}\right)= x_1x_2\cdots x_m,
\]
which contradicts Claim~\ref{claim4}.
\end{proof}

Now we complete the main proof of the proposition.
By Claim~\ref{claim6}, the equality~\eqref{26082503} reduces to the form
\begin{equation}\label{26082505}
\varphi(\bu_{n,k})+\br=\bu_{m,1}.
\end{equation}
Hence, in the subset representation, $\varphi(\bu_{n,k})$ is a subset of $\bu_{m,1}$.
From Claim~\ref{claim4} we obtain that
\begin{equation}\label{26082506}
\bt=\varphi\left(\left(\sum_{i=1}^kx_i
               +\sum_{i=k+1}^nx_iy_i\right)x_{n+1}\right)
               =\varphi\left(\sum_{i=1}^kx_i
               +\sum_{i=k+1}^nx_iy_i\right)\varphi(x_{n+1})
\end{equation}
contains a word of the form $x_1x_{m+1}$ or $x_py_px_{m+1}$ for some $2\leq p \leq m$.

\textbf{Case 1.} The term $\bt$ contains at least two distinct words.
Then $\bt$ must also contain either $x_1x_2\cdots x_m$ or $x_1x_{m+1}$ or $x_jy_jx_{m+1}$ for some $j\neq p$ ($2\leq j\leq m$).

\textbf{Subcase 1.1.} The term $\bt$ contains $x_1x_2\cdots x_m$. Then
\[
\bt=x_1x_2\cdots x_m+x_py_px_{m+1}=x_p(x_1x_2\cdots x_{p-1}x_{p+1}\cdots x_m+y_px_{m+1})
\]
or
\[
\bt=x_1x_2\cdots x_m+x_1x_{m+1}=x_1(x_2\cdots x_m+x_{m+1}).
\]
If this were impossible, $\bt$ would have no nontrivial multiplicative subterm, contradicting \eqref{26082506}.

If $\bt=x_p(x_1x_2\cdots x_{p-1}x_{p+1}\cdots x_m+y_px_{m+1})$,
then by Claim~\ref{claim2}, $\varphi(x_{n+1})=x_p$ and
\begin{equation}\label{26082701}
\varphi\left(\sum_{i=1}^kx_i+\sum_{i=k+1}^nx_iy_i\right)=x_1x_2\cdots x_{p-1}x_{p+1}\cdots x_m+y_px_{m+1}.
\end{equation}
Consequently,
\[
c(\varphi(x_1x_2\cdots x_n))\subseteq \{x_1,x_2,\dots, x_{p-1}, y_p, x_{p+1},\dots, x_m, x_{m+1}\},
\]
so $x_p, y_j\notin c(\varphi(x_1x_2\cdots x_n))$ for every $j\neq p$ ($2\leq j \leq m$).
This implies that $\varphi(x_1x_2\cdots x_n)=x_1x_{m+1}$, and so $n=2$, each $\varphi(x_i)$ to be a single variable.
Combined with \eqref{26082701}, it is impossible.
Hence $\varphi(x_1x_2\cdots x_n)\nsubseteq \bu_{m,1}$, contradicting \eqref{26082505}.

If $\bt=x_1(x_2\cdots x_m+x_{m+1})$,
then by Claim~\ref{claim2}, $\varphi(x_{n+1})=x_1$ and
\begin{equation}\label{26082702}
\varphi\left(\sum_{i=1}^kx_i+\sum_{i=k+1}^nx_iy_i\right)=x_2\cdots x_m+x_{m+1}.
\end{equation}
Consequently,
\[
c(\varphi(x_1x_2\cdots x_n))\subseteq \{x_2, x_3,\dots, x_m, x_{m+1}\},
\]
so $x_1, y_j\notin c(\varphi(x_1x_2\cdots x_n))$ for every $2\leq j \leq m$.
Hence every word in $\varphi(x_1x_2\cdots x_n)$ does not belong to $\bu_{m, 1}$,
and so $\varphi(x_1x_2\cdots x_n)\nsubseteq \bu_{m,1}$, contradicting \eqref{26082505}.

\textbf{Subcase 1.2.}
The term $\bt$ contains $x_jy_jx_{m+1}$ for some $j\neq p$ ($2\leq j \leq m$).
Then $\bt$ does not contain $x_1x_2\cdots x_m$.
By \eqref{26082505},
we may write
\[
\bt=\sum_{i=2}^\ell x_iy_ix_{m+1},\quad or\quad \sum_{i=2}^\ell x_iy_ix_{m+1}+x_1x_{m+1}
\]
for some $3\leq\ell\leq m$. Since $\bt$ has the unique multiplicative factorization
\[
\bt=\left(\sum_{i=2}^\ell x_iy_i\right)x_{m+1},\quad or\quad \left(\sum_{i=2}^\ell x_iy_i+x_1\right)x_{m+1}
\]
it follows from Claim~\ref{claim2} that $\varphi(x_{n+1})=x_{m+1}$ and
\begin{equation}\label{26012901}
\varphi\left(\sum_{i=1}^kx_i+\sum_{i=k+1}^nx_iy_i\right)=\sum_{{i=2}}^\ell x_iy_i,\quad or\quad \left(\sum_{i=2}^\ell x_iy_i+x_1\right).
\end{equation}
Thus
\begin{equation}\label{26012902}
c(\varphi(x_1x_2\cdots x_n))\subseteq\{x_1, x_2, y_2,\dots, x_\ell, y_\ell\},
\end{equation}
and so $x_{m+1}\notin c(\varphi(x_1x_2\cdots x_n))$.
From \eqref{26082505} we have that $\varphi(x_1x_2\cdots x_n)=x_1\cdots x_m$.
Combining \eqref{26012901} and \eqref{26012902},
one can deduce that $\ell=m$, $k=1$ and so
\[
\varphi\left(x_1+\sum_{i=2}^nx_iy_i\right)=x_1+\sum_{{i=2}}^m x_iy_i.
\]
Therefore, $\varphi(x_i)$ is a variable for every $1\leq i\leq n$, which forces $n=m$, a contradiction.

\textbf{Subcase 1.3.}
The term $\bt$ contains $x_1x_{m+1}$. This case is included in Subcases 1.1 and 1.2.

\textbf{Case 2.} The term $\bt$ coincides with $x_py_px_{m+1}$ or $x_1x_{m+1}$.
If $\bt$ coincides with $x_py_px_{m+1}$, then
by Claim~\ref{claim2}, $\varphi(x_{n+1})$ is a single variable;
we may assume that $\varphi(x_{n+1})=x_p$.
Then
\[
\varphi\left(\sum_{1\leq i\leq k}x_i+\sum_{k+1\leq i\leq n}x_iy_i\right)=y_px_{m+1}.
\]
It follows that $c(\varphi(x_1x_2\cdots x_n))\subseteq\{y_p, x_{m+1}\}$,
and so $\varphi(x_1x_2\cdots x_n)\nsubseteq \bu_{m,1}$, contradicting \eqref{26082505}.
If $\bt$ coincides with $x_1x_{m+1}$, then
\[
\varphi\left(\sum_{1\leq i\leq k}x_i+\sum_{k+1\leq i\leq n}x_iy_i\right)=x_{m+1}\quad or\quad x_1,
\]
contradicting Claim~\ref{claim2}.
This completes the proof.
\end{proof}

Let $\sigma_{n, 1}$ denote the inequality $\bq_n \preceq \bu_{n, 1}$, where
\[
\bq_n=\prod_{i=1}^{n+1}x_i.
\]
Write $\Omega_1$ for the set of all inequalities $\sigma_{n, 1}$ with $n\geq 2$.
The following result explores a sufficient condition for an additively idempotent semiring to be nonfinitely based.

\begin{thm}\label{thm02}
Let $S$ be a commutative ai-semiring
and $\Sigma$ an equational basis for $S$ that contains an infinite subset of $\Omega_1$.
If $\bu_{m, 1}$ is $\bt$-free for every inequality $\bs\preceq\bt$ in $\Sigma\setminus \Omega_1$
and for every $m\geq 4$, then $S$ is nonfinitely based.
\end{thm}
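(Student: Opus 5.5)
We argue by contradiction, using Lemma~\ref{el26012701}. Suppose $S$ is finitely based. Then $\Sigma$ contains a finite subset $\Sigma_0$ that also defines $\mathsf{V}(S)$. Since $\Sigma$ contains infinitely many inequalities $\sigma_{n,1}$, we can pick $m\geq 4$ such that $\sigma_{m,1}\in\Sigma$ and $m>n$ for every $\sigma_{n,1}$ occurring in $\Sigma_0$. Because $\sigma_{m,1}$ holds in $S$, it is derivable from $\Sigma_0$. The goal is to show that no derivation from $\Sigma_0$ can start at $\bu_{m,1}$ and produce anything else; in particular, $\bq_m$ can never be added.

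We write $\sigma_{m,1}$ as the identity $\bu_{m,1}\approx\bu_{m,1}+\bq_m$, which is nontrivial since $\bq_m\notin\bu_{m,1}$. By Lemma~\ref{lem02} there is a sequence $\bu_{m,1}=\bt_1,\bt_2,\dots,\bt_r=\bu_{m,1}+\bq_m$ in which each step replaces $\bp\varphi(\bs)\bq+\br$ by $\bp\varphi(\bs')\bq+\br$, where $\bs\approx\bs'$ or $\bs'\approx\bs$ lies in $\Sigma_0$. Each inequality $\bs\preceq\bt$ in $\Sigma_0$ is the identity $\bt\approx\bt+\bs$, so the two sides of the identity used are $\bt$ and $\bt+\bs$. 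We may take the first index $i$ with $\bt_{i+1}\neq\bu_{m,1}$, so that $\bt_i=\bu_{m,1}$. Then $\bu_{m,1}$ has a subterm of the form $\varphi(\bt)$ or $\varphi(\bt+\bs)=\varphi(\bt)+\varphi(\bs)$. In the second case $\varphi(\bt)$ is an additive subterm of $\varphi(\bt+\bs)$, so it is again a subterm of $\bu_{m,1}$. Either way, $\bu_{m,1}$ fails to be $\bt$-free.

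We now rule this out case by case. If the inequality used lies in $\Sigma\setminus\Omega_1$, this contradicts the hypothesis that $\bu_{m,1}$ is $\bt$-free. Otherwise the inequality is $\sigma_{n,1}$ with $n<m$ and $n\geq 2$, and $\bt=\bu_{n,1}$. Proposition~\ref{pro01} with $k=1$ and $n\neq m$ says that $\bu_{m,1}$ is $\bu_{n,1}$-free, again a contradiction. Hence no step changes the term, so $\bu_{m,1}+\bq_m=\bu_{m,1}$, which is false. Therefore $S$ is nonfinitely based.

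The main obstacle is the direction of the rewriting step. We must make sure that a step replacing $\varphi(\bt+\bs)$ by $\varphi(\bt)$, which shrinks the term, is also blocked. It is, because $\varphi(\bt)\leq\varphi(\bt+\bs)$ and $\leq$ is transitive, so any occurrence of $\varphi(\bt+\bs)$ in $\bu_{m,1}$ already gives an occurrence of $\varphi(\bt)$. We must also check that multiplicative contexts $\bp,\bq$ in the commutative setting match the definition of subterm used for freeness; since the definition of subterm allows exactly such contexts, they do.
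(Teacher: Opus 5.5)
Your proof is correct and follows the same route as the paper. You pass to a finite subset via compactness and choose $m\geq 4$ larger than every $n$ with $\sigma_{n,1}$ in that subset. You then use Lemma~\ref{lem02} to show that no rule of the finite subset can be applied to $\mathbf{u}_{m,1}$ at all, by the freeness hypothesis for $\Sigma\setminus\Omega_1$ and by Proposition~\ref{pro01} (with $k=1$, $n<m$) for the $\sigma_{n,1}$. Your observation that $\varphi(\mathbf{t})$ is an additive subterm of $\varphi(\mathbf{t}+\mathbf{s})$ is exactly the role Lemma~\ref{lem26012701} plays in the paper's shorter write-up.
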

\begin{proof}
By hypothesis and the compactness theorem of equational logic,
it is enough to prove that no finite subset of the set $\Sigma$ defines the variety $\mathsf{V}(S)$.

Let $\Sigma'$ be an arbitrary finite subset of $\Sigma$.
Since $\Sigma$ contains an infinite subset of $\Omega_1$, we can choose an integer $m$ such that
\[
m>\max\{n\geq 1 \mid \sigma_{n, 1}\in \Sigma'\}~ \text{and} ~\sigma_{m, 1}\in\Sigma.
\]
Then $\sigma_{m, 1}\notin\Sigma'$.

To prove that $\Sigma'$ cannot define the variety $\mathsf{V}(S)$,
it suffices to verify that $\sigma_{m, 1}$ is not derivable from $\Sigma'$.
By hypothesis, together with Lemma~\ref{lem26012701} and Lemma~\ref{lem02},
it is sufficient to show that $\bu_{m, 1}$ is $\bu_{n, 1}$-free for all $1 \leq n<m$,
which follows from Proposition~\ref{pro01} immediately.
This completes the proof.
\end{proof}

\begin{pro}\label{pro02}
Let $n\geq2$, and let $\bv\in\Theta_n$ be such that $\bq_n\preceq\bv$ is nontrivial.
 Then $\bu_{m, 1}$ is $\bv$-free for every $m\geq 4$.
\end{pro}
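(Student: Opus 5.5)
Suppose, for a contradiction, that $\bu_{m,1}$ is not $\bv$-free for some $m\geq 4$. Then, as in the proof of Proposition~\ref{pro01}, there are a term $\bp$ (possibly the empty word), a term $\br$ (possibly empty) and a substitution $\varphi$ with
\[
\bp\varphi(\bv)+\br=\bu_{m,1},
\]
so $\bp\varphi(\bv)$ is a subset of $\bu_{m,1}$. I would first gather structural facts, mirroring Claims~\ref{claim2}--\ref{claim3}.

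\emph{Disjoint contents.} Every word $x_ix_j\bw_{ij}$ of $\bv$ is linear. If $c(\varphi(x_i))\cap c(\varphi(x_j))\neq\emptyset$ for some $i\neq j$, then $\varphi(x_ix_j\bw_{ij})$ contains a word with a repeated variable. This contradicts Lemma~\ref{lem26011801}(a). Hence the sets $c(\varphi(x_1)),\dots,c(\varphi(x_{n+1}))$ are pairwise disjoint.

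\emph{$\bp$ is a word.} If $\bp$ had two distinct words, each would be multiplied by $\varphi(x_1x_2\bw_{12})$, which has length at least $2$. We would get two distinct words of $\bu_{m,1}$ sharing a subword of length $2$, contradicting Lemma~\ref{lem26011801}(c).

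Next I would fix one word $\alpha_i$ in each $\varphi(x_i)$ and one word $\beta_{ij}$ in each $\varphi(\bw_{ij})$ (empty if $\bw_{ij}$ is empty). Then every $\bp\alpha_i\alpha_j\beta_{ij}$ with $i<j$ is a word of $\bu_{m,1}$, and $n+1\geq 3$ gives at least three indices. The main step is a case analysis on which words of $\bu_{m,1}$ these are.

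\emph{Two images are distinct words.} Take $W_{ij}\neq W_{ik}$ sharing the index $i$. They share all variables of $\bp\alpha_i$. By Lemma~\ref{lem26011801}(b),(c) this common part has exactly one variable and cannot be a length-$2$ subword. So either $\bp$ is empty and $\alpha_i$ is a single variable, or $\bp$ is a variable and no other variable is shared.

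\begin{itemize}
\item If $\bp$ is a single variable, then any two image words with a common index share $\bp$ together with $c(\alpha_i)$, which is nonempty and disjoint from $c(\bp)$ by linearity. That gives at least two common variables, contradicting (b). So all such images coincide, and by connectivity of the pairs all image words equal one word $W$.
\item If $\bp$ is empty, each short word of $\bu_{m,1}$ contains $x_{m+1}$. Any two distinct short images then share $x_{m+1}$ as well as $\alpha_i$. This forces $\alpha_i=x_{m+1}$ for every index common to two distinct short images. Since $\alpha_1,\alpha_2,\alpha_3$ are disjoint, at most one index can play this role. Using the triangle of indices $1,2,3$, I would push at least one image onto the long word $x_1\cdots x_m$. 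Then its two partners through a common index meet it in a single variable, which is not $x_{m+1}$. I expect this to contradict either disjointness or the fact that $x_1x_{m+1}$ is the only word of length $2$ in $\bu_{m,1}$. This is essentially the configuration forbidden by Lemma~\ref{lem26011801}(j), and I would invoke that lemma with $\bt_r=\varphi(x_r)$ wherever the extra factors $\beta_{ij}$ are empty.
\end{itemize}

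I expect this subcase to be the main obstacle. It needs a careful look at $x_1x_{m+1}$ versus the words $x_py_px_{m+1}$, much like Subcases~1.1--1.2 of Proposition~\ref{pro01}.

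\emph{All images equal one word $W$.} Then every word $x_ix_j\bw_{ij}$ of $\bv$ maps to the same $W$. The image $\bp\varphi(x_1\cdots x_{n+1})$ is built from pairwise disjoint nonempty pieces all lying inside $W$. Comparing contents and lengths (and using cancellation in $X_c^+$), two distinct words of $\bv$ cannot both map onto $W$ unless they have the same content. Each word of $\bv$ is linear, and all pairs $x_ix_j$ occur in $\bv$. So all words of $\bv$ must be the single word $x_1x_2\cdots x_{n+1}=\bq_n$. Then $\bv=\bq_n$, making $\bq_n\preceq\bv$ trivial, which contradicts the hypothesis. Nontriviality is used exactly at this point.

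Since every case is contradictory, $\bu_{m,1}$ is $\bv$-free.
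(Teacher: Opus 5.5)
\textbf{Verdict: there is a genuine gap.} The case $\bp=\varepsilon$ with two distinct images is not proved, and the contradiction you expect there does not actually occur.

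\textbf{What works.} Your treatment of $\bp\neq\varepsilon$ is sound, and it works for any nonempty word $\bp$, not only a variable: $\bp\alpha_i$ is then a linear word of length at least $2$. So are the disjointness step and the final step showing that coinciding images force $\bv=\bq_n$. This is in fact a neater route than the paper's Claims~\ref{claimqj04}--\ref{claimqj05}.

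\textbf{The gap.} The case $\bp=\varepsilon$ with two distinct images is where the real content of the proposition lies, and you leave it as an expectation. The contradiction you anticipate does not happen at the level of a triangle. The words $x_1x_2\cdots x_m$, $x_py_px_{m+1}$, $x_qy_qx_{m+1}$ of $\bu_{m,1}$ meet pairwise in the three distinct variables $x_p,x_q,x_{m+1}$. Likewise $x_1\cdots x_m$, $x_1x_{m+1}$, $x_py_px_{m+1}$ meet in $x_1,x_p,x_{m+1}$. So $W_{12}=x_1\cdots x_m$, $W_{13}=x_py_px_{m+1}$, $W_{23}=x_qy_qx_{m+1}$ with $\alpha_1=x_p$, $\alpha_2=x_q$, $\alpha_3=x_{m+1}$ is consistent with disjointness of contents. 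It is also consistent with $x_1x_{m+1}$ being the only word of length $2$. Two further problems:
\begin{itemize}
\item Lemma~\ref{lem26011801}(j) only covers the special case $\bw_{12}=\bw_{13}=\bw_{23}=\varepsilon$.
\item The indices $1,2,3$ need not give three distinct images (take $\bw_{12}=x_3$), so the triangle has to be chosen, not assumed.
\end{itemize}

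\textbf{The missing idea: use pairs of indices outside the triangle.} One way to close it:
\begin{enumerate}
\item Choose $\beta_{ij}=\prod_{s\in c(\bw_{ij})}\alpha_s$ consistently, and put $C_{ij}=c(x_ix_j\bw_{ij})$, so that $W_{ij}=\prod_{s\in C_{ij}}\alpha_s$.
\item The $\alpha_s$ are nonempty with pairwise disjoint contents, so $W_{ij}=W_{kl}$ if and only if $C_{ij}=C_{kl}$.
\item By Lemma~\ref{lem26011801}(b), two different sets $C_{ij}\neq C_{kl}$ meet in exactly one index $s$, and $\alpha_s$ is a single variable. Thus the distinct sets $C_{ij}$ cover every pair of indices, and any two of them meet in exactly one index.
\item Nontriviality gives two such sets $C,C'$. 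Let $C\cap C'=\{s\}$, $a\in C\setminus\{s\}$, $b\in C'\setminus\{s\}$. The set containing $a,b$ completes a triangle with three distinct vertices.
\item The images of these three sets are three distinct words of $\bu_{m,1}$ meeting pairwise in distinct variables. Hence they form one of the two triples above.
\item The set sent to $x_py_px_{m+1}$ contains an index $t$ with $y_p\in c(\alpha_t)$. The set sent to $x_1\cdots x_m$ contains the index $r$ where it meets the other short word's set, with $\alpha_r=x_1$ or $\alpha_r=x_q$ ($q\neq p$).
\item The word of $\bv$ containing $x_tx_r$ is then sent to a word of $\bu_{m,1}$ containing $y_p$ together with $x_1$ or $x_q$. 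No such word exists, which is the contradiction.
\end{enumerate}
Some argument of this kind is needed before your proof is complete. The paper handles the same case differently: it proves that $\bv$ has exactly one word of length at least $3$, and then analyzes $\bv=x_1\cdots x_k+\cdots$.
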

\begin{proof}
Now suppose for contradiction that $\bu_{m, 1}$ is not $\bv$-free for some $m\geq 4$, $n\geq2$ and $\bv\in \Theta_n$.
Then there exist terms $\bp, \br\in P_f(X_c^+)$ and a substitution $\varphi$ such that
\begin{equation}\label{26082501}
\bp\varphi(\bv)+\br=\bu_{m, 1},
\end{equation}
where $\bp$ may be the empty word, and $\br$ may be the empty set.
Since $\bq_n\preceq\bv$ is nontrivial, the term $\bv$ contains
at least two distinct words. Indeed, if all the words
$x_ix_j\bw_{ij}$ occurring in $\bv$ coincided with a single word
$\bw$, then $\bw$ would contain every pair $x_ix_j$. Hence
\[
c(\bw)=\{x_1,\ldots,x_{n+1}\}.
\]
Since every word occurring in $\bv$ is linear, we would have
$\bw=\bq_n$. Thus $\bq_n$ would be a summand of $\bv$, making
$\bq_n\preceq\bv$ trivial, a contradiction.

\begin{claim}\label{claimqj01}
For distinct words $x_{i_1}x_{j_1}\bw_{i_1j_1}$ and $x_{i_2}x_{j_2}\bw_{i_2j_2}$ in $\bv$,
we have
\[
\bp\varphi(x_{i_1}x_{j_1}\bw_{i_1j_1})\neq\bp\varphi(x_{i_2}x_{j_2}\bw_{i_2j_2}).
\]
\end{claim}
\begin{proof}[Proof of Claim $\ref{claimqj01}$.]
By the equality~\eqref{26082501} and Lemma~\ref{lem26011801}(e),
the sets $c(x_{i_1}x_{j_1}\bw_{i_1j_1})$ and $c(x_{i_2}x_{j_2}\bw_{i_2j_2})$ are incomparable.
Suppose for contradiction that
\[
\bp\varphi(x_{i_1}x_{j_1}\bw_{i_1j_1})=\bp\varphi(x_{i_2}x_{j_2}\bw_{i_2j_2}).
\]
Then, by Lemma~\ref{lem26011801}(l) and the cancellation property of $P_f(X^+)$ (see the preliminaries),
we obtain
\begin{equation}\label{26012301}
\varphi(x_{i_1}x_{j_1}\bw_{i_1j_1})=\varphi(x_{i_2}x_{j_2}\bw_{i_2j_2}).
\end{equation}

\textbf{Case 1.} $c(x_{i_1}x_{j_1}\bw_{i_1j_1})\cap c(x_{i_2}x_{j_2}\bw_{i_2j_2})$ is empty.
Take a variable $y$ in $\varphi(x_{i_1})$. By \eqref{26012301}, $y$ must appear in
$\varphi(x_{i_2})$, $\varphi(x_{j_2})$, or $\varphi(x_s)$ for some $x_s\leq \bw_{i_2j_2}$.
Consequently, $y^2$ is a subterm of
$\varphi(x_{i_1}x_{i_2})$, $\varphi(x_{i_1}x_{j_2})$, or $\varphi(x_{i_1}x_s)$.
Therefore, $y^2\leq \bu_{m, 1}$, which contradicts Lemma~\ref{lem26011801}(d).

\textbf{Case 2.} $c(x_{i_1}x_{j_1}\bw_{i_1j_1})\cap c(x_{i_2}x_{j_2}\bw_{i_2j_2})$ is nonempty.
Without loss of generality, we may assume that $x_{i_1}=x_{i_2}$.
By \eqref{26012301}, it follows that
\[
\varphi(x_{j_1}\bw_{i_1j_1})=\varphi(x_{j_2}\bw_{i_2j_2}).
\]

If $c(x_{j_1}\bw_{i_1j_1})\cap c(x_{j_2}\bw_{i_2j_2})$ is empty,
we are back to Case~1 and obtain a contradiction.
If the intersection remains nonempty,
we iterate the same step: after finitely many iterations
we must arrive at two words with disjoint contents whose $\varphi$-images coincide, which again reduces to Case~1.
Since the two original words are distinct and linear, after
cancelling all common variables, the two remaining words are
nonempty and have disjoint contents.
\end{proof}

\begin{claim}\label{claimqj03}
The term $\bp$ is a word.
\end{claim}
\begin{proof}[Proof of Claim $\ref{claimqj03}$.]
Suppose that $\bp_1$ and $\bp_2$ are distinct words in $\bp$.
By \eqref{26082501}, both $\bp_1\varphi(x_1x_2\bw_{12})$ and $\bp_2\varphi(x_1x_2\bw_{12})$ are contained in $\bu_{m,1}$.
Consequently, $\bu_{m,1}$ would contain two distinct words sharing a common subword of length $2$,
which contradicts Lemma~\ref{lem26011801}(c).
\end{proof}

\begin{claim}\label{claimqj04}
$\bp$ is either empty or a single variable.
\end{claim}
\begin{proof}[Proof of Claim $\ref{claimqj04}$.]
By Claim~\ref{claimqj03}, we know that $\bp$ is a word.
If $\ell(\bp)\geq 2$, then by \eqref{26082501} and Lemma~\ref{lem26011801}(g),
for distinct words $x_{i_1}x_{j_1}\bw_{i_1j_1}$ and $x_{i_2}x_{j_2}\bw_{i_2j_2}$ in $\bv$,
we have
\[
\bp\varphi(x_{i_1}x_{j_1}\bw_{i_1j_1})=\bp\varphi(x_{i_2}x_{j_2}\bw_{i_2j_2})=x_1x_2\cdots x_m.
\]
This contradicts Claim~\ref{claimqj01}.
Thus the length of $\bp$ is at most $1$;
therefore $\bp$ is either empty or a single variable.
\end{proof}

\begin{claim}\label{claimqj05}
The word $\bp$ is empty.
\end{claim}
\begin{proof}[Proof of Claim $\ref{claimqj05}$.]
Suppose for contradiction that $\bp$ is nonempty.
By Claim~\ref{claimqj04}, $\bp$ is then a single variable.
For distinct words $x_{i_1}x_{j_1}\bw_{i_1j_1}$ and $x_{i_1}x_{j_2}\bw_{i_1j_2}$ in $\bv$,
Claim~\ref{claimqj01} tells us that
\[
\bp\varphi(x_{i_1}x_{j_1}\bw_{i_1j_1})\neq\bp\varphi(x_{i_1}x_{j_2}\bw_{i_1j_2}).
\]

Observe that $\bp\varphi(x_{i_1}x_{j_1}\bw_{i_1j_1})$ and $\bp\varphi(x_{i_1}x_{j_2}\bw_{i_1j_2})$ share a common subword of length $2$,
which contradicts Lemma~\ref{lem26011801}(c).
Thus $\bp$ is empty.
\end{proof}

By Claim~\ref{claimqj05}, the equality~\eqref{26082501} reduces to the form
\begin{equation}\label{26082502}
\varphi(\bv)+\br=\bu_{m, 1}.
\end{equation}

\begin{claim}\label{claimqj02}
The term $\bv$ contains exactly one word of length at least $3$.
\end{claim}
\begin{proof}[Proof of Claim $\ref{claimqj02}$.]
By definition, every word in $\bv$ has length at least $2$.
We argue by contradiction.

\textbf{Case 1.}
All words in $\bv$ have length $2$. Then
$\bw_{ij}=\varepsilon$ for all $1\leq i<j\leq n+1$, and consequently
\[
\varphi(x_1)\varphi(x_2)+\varphi(x_1)\varphi(x_3)+\varphi(x_2)\varphi(x_3)\subseteq \bu_{m,1},
\]
which contradicts Lemma~\ref{lem26011801}(j).

\textbf{Case 2.}
$x_{i_1}x_{j_1}\bw_{i_1j_1}$ and $x_{i_2}x_{j_2}\bw_{i_2j_2}$
are distinct words in $\bv$ with length at least $3$.
Then both $\bw_{i_1j_1}$ and $\bw_{i_2j_2}$ are nonempty.
By Claim~\ref{claimqj01},
$\varphi(x_{i_1}x_{j_1}\bw_{i_1j_1})$ and $\varphi(x_{i_2}x_{j_2}\bw_{i_2j_2})$
cannot be simultaneously equal to $x_1x_2\cdots x_m$, and so
$x_1x_{m+1}$ or $x_iy_ix_{m+1}\in \varphi(x_{i_1}x_{j_1}\bw_{i_1j_1})$; or
$x_1x_{m+1}$ or $x_iy_ix_{m+1}\in \varphi(x_{i_2}x_{j_2}\bw_{i_2j_2})$ for some $1\leq i\leq m$.
We may assume that $x_1x_{m+1}$ or $x_iy_ix_{m+1}\in \varphi(x_{i_1}x_{j_1}\bw_{i_1j_1})$.
If $x_1x_{m+1}\in \varphi(x_{i_1}x_{j_1}\bw_{i_1j_1})$, then it could not be expressed as a product of three terms;
hence $x_1x_{m+1}\not\in \varphi(x_{i_1}x_{j_1}\bw_{i_1j_1})$.
Now suppose that $x_iy_ix_{m+1}\in \varphi(x_{i_1}x_{j_1}\bw_{i_1j_1})$.
If $\varphi(x_{i_1}x_{j_1}\bw_{i_1j_1})$ contained another word,
it could not be expressed as a product of three terms; hence it contains no other word.
Consequently,
$\varphi(x_{i_1}x_{j_1}\bw_{i_1j_1})=x_iy_ix_{m+1}$, and so $\bw_{i_1j_1}$ is a variable.
Suppose without loss of generality that
$\varphi(x_{i_1})=x_i$, $\varphi(x_{j_1})=y_i$, and $\varphi(\bw_{i_1j_1})=x_{m+1}$.

If $\varphi(x_{i_2}x_{j_2}\bw_{i_2j_2})=x_1x_2\cdots x_m$,
then there exists $x_s\in c(x_{i_2}x_{j_2}\bw_{i_2j_2})$ such that $\varphi(x_s)\neq x_i$.
Since $x_s\neq x_{j_1}$, it follows that $x_sx_{j_1}$ is a subterm of $\bv$.
This implies that $\varphi(x_sx_{j_1})$ is a subterm of $\bu_{m,1}$,
and $x_ry_i$ is a subterm of $\varphi(x_sx_{j_1})$ for some $2\leq r\leq m$, $r\neq i$.
Thus $x_ry_i$ is a subterm of $\bu_{m,1}$, a contradiction.

If $\varphi(x_{i_2}x_{j_2}\bw_{i_2j_2})\neq x_1x_2\cdots x_m$, then $\varphi(x_{i_2}x_{j_2}\bw_{i_2j_2})=x_jy_jx_{m+1}$
for some $2\leq j\leq m$ and $i\neq j$.
Consequently, $y_iy_j$ would be a subword of $\varphi(x_{j_1}x_{i_2})$, $\varphi(x_{j_1}x_{j_2})$, or $\varphi(x_{j_1}\bw_{i_2j_2})$.
Therefore, $y_iy_j\leq \bu_{m, 1}$, which is impossible.
\end{proof}

Now we complete the main proof of the proposition.
By  Lemma~\ref{lem26011801}(e), \eqref{26082502} and Claim~\ref{claimqj02}, we may write
\[
\bv=\prod_{i=1}^k x_i+\sum_{1\leq i\leq k, k+1\leq j\leq n+1}x_ix_j+\sum_{k+1\leq i<j\leq n+1}x_ix_j,
\]
where $3\leq k\leq n$.

If $\varphi(x_1x_2\cdots x_k)=x_1x_2\cdots x_m$, then Claim~\ref{claimqj01} tells us that
$x_1x_{m+1}$ or $x_iy_ix_{m+1}$ is a word in $\varphi(x_1x_{k+1})$ for some $2\leq i\leq m$.
If $x_iy_ix_{m+1}$ is a word in $\varphi(x_1x_{k+1})$, then $\varphi(x_1)=x_i$, and so
\[
c(\varphi(x_2\cdots x_k))\subseteq\{x_1,\cdots, x_{i-1}, x_{i+1},\cdots, x_m\}.
\]
Moreover, $y_ix_{m+1}$ is a word in $\varphi(x_{k+1})$.
It follows that there exists $j$ $(1\leq j\leq m, j\neq i)$ such that
$x_jy_ix_{m+1}\leq\varphi(x_2x_{k+1})$, and so
$x_jy_ix_{m+1} \leq \bu_{m, 1}$, a contradiction.
If $x_1x_{m+1}$ is a word in $\varphi(x_1x_{k+1})$, then $\varphi(x_1)=x_1$, and so
\[
c(\varphi(x_2\cdots x_k))\subseteq\{x_2,\cdots, x_m\}.
\]
Moreover, $x_{m+1}$ is a word in $\varphi(x_{k+1})$.
Since $m>n\geq k$ and $\varphi(x_1x_2\cdots x_k)=x_1x_2\cdots x_m$, we have that $|\varphi(x_j)\geq2|$ for some $2\leq j\leq k$,
and so $x_sx_t\leq \varphi(x_j)$ for some $2\leq s, t\leq m$.
Hence $x_sx_tx_{m+1}\leq\varphi(x_jx_{k+1})$, and so
$x_sx_tx_{m+1} \leq \bu_{m, 1}$, a contradiction.

If $\varphi(x_1x_2\cdots x_k)\neq x_1x_2\cdots x_m$,
then $\varphi(x_1x_2\cdots x_k)=x_iy_ix_{m+1}$ for some $2\leq i\leq m$, which forces $k=3$.
So $\varphi(x_1x_2x_3)=x_iy_ix_{m+1}$.
This implies that $y_i=\varphi(x_j)$ for some $1\leq j\leq 3$.
Notice that $x_jx_{n+1}$ is a word in $\bv$.
Combined with the equality~\eqref{26082502},
we obtain $\varphi(x_jx_{n+1})=y_ix_ix_{m+1}$, and so $\varphi(x_jx_{n+1})=\varphi(x_1x_2\cdots x_k)$,
which contradicts Claim~\ref{claimqj01}.

This completes the proof.
\end{proof}

For any $n\geq 2$, let $\mathscr{U}_{n, k}$ denote the set of all terms of the form
\[
\sum_{i=1}^{n+1}x_i\bw_i+\sum_{i=1}^{\ell}x_i^2\bp_i
+\sum_{\ell+1\leq i<j\leq k}x_ix_j\bp_{ij}+\sum_{i=k+1}^{n+1}x_i^2\bp_i,
\]
where \[
c\left(\sum_{i=1}^{n+1}x_i\bw_i\right)=\{x_1,\ldots,x_{n+1}\};
\]
all words $\bw_i$, $\bp_i$ and $\bp_{ij}$ may be empty; $n\geq 1$, $1\leq k\leq n+1$ and $0\leq \ell< k-1$.

Notice that $\bu_{n,1}\in\mathscr{U}_{n,n+1}$. Indeed, take
$\ell=0$, $k=n+1$, and
\[
\bw_1=x_{n+1},\qquad
\bw_{n+1}=x_1,
\]
and, for $2\leq i\leq n$, take
\[
\bw_i
=
\prod_{\substack{1\leq r\leq n\\r\neq i}}x_r.
\]
Furthermore, set
\[
\bp_{ij}
=
\prod_{\substack{1\leq r\leq n\\r\notin\{i,j\}}}x_r
\qquad
(1\leq i<j\leq n),
\]
and
\[
\bp_{1,n+1}=\varepsilon,\qquad
\bp_{i,n+1}=y_i
\quad (2\leq i\leq n).
\]
With these choices, the corresponding term in
$\mathscr{U}_{n,n+1}$ reduces, by additive idempotence, to
\[
\prod_{i=1}^{n}x_i
+
x_1x_{n+1}
+
\sum_{i=2}^{n}x_iy_ix_{n+1}
=
\bu_{n,1}.
\]
Thus $\mathscr{U}_{n,n+1}$ contains the upper side of
$\sigma_{n,1}$ for every $n\geq2$.

\begin{pro}\label{inequality005}
Let $n\geq2$, and let $\bv\in\mathscr U_{n, k}$ be the upper side
of a nontrivial inequality in \eqref{005} that does not belong
to $\Omega_1$. Then $\bu_{m,1}$ is $\bv$-free for every
$m\geq4$.
\end{pro}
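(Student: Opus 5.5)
Suppose, for a contradiction, that $\bu_{m,1}$ is not $\bv$-free for some $m\geq4$. Then, as in the proofs of Propositions~\ref{pro01} and~\ref{pro02}, there are terms $\bp,\br\in P_f(X_c^+)$ and a substitution $\varphi$ with
\[
\bp\varphi(\bv)+\br=\bu_{m,1},
\]
where $\bp$ may be empty and $\br$ may be the empty set. We argue from the shape of $\bv$. The first step is to rule out the square summands, using Lemma~\ref{lem26011801}(d). If $\ell\geq1$ or $k\leq n$, then $\bv$ contains a summand $x_i^2\bp_i$, and $\bp\varphi(x_i^2\bp_i)$ contains the square $\varphi(x_i)^2$ as a subterm. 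That square would then be a subterm of $\bu_{m,1}$, which is impossible. So the only case left is $\ell=0$ and $k=n+1$, in which
\[
\bv=\sum_{i=1}^{n+1}x_i\bw_i+\sum_{1\leq i<j\leq n+1}x_ix_j\bp_{ij}.
\]
The same square argument, applied to any word of $\bv$ in which some variable occurs twice, shows that every word of $\bv$ must be linear. Indeed, if a word $x_i\bw_i$ with $x_i\in c(\bw_i)$ occurs in $\bv$, its image again contains a square of a nonempty term.

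So $\bv$ is a sum of linear words in which every pair $x_ix_j$ occurs as a subword. The words $x_i\bw_i$ may have length $1$, but now $\bv$ has almost the shape of a member of $\Theta_n$. The second step handles the linear words. Here I would reuse the scheme of Proposition~\ref{pro02}. Claims~\ref{claimqj01}, \ref{claimqj03}, \ref{claimqj04} and~\ref{claimqj05} rely only on linearity, on Lemma~\ref{lem26011801}(c),(d),(e),(g),(l), and on the presence of all the $x_ix_j$. They therefore transfer verbatim and show that $\bp$ is empty and that distinct words of $\bv$ have distinct images. There is one new situation: a length-one summand $x_i$ (that is, $\bw_i$ empty) must be treated before these claims. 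Every $x_ix_j\bp_{ij}$ contains $x_i$, so $\varphi(x_i)$ and $\varphi(x_ix_j\bp_{ij})$ would give two words of $\bu_{m,1}$, one a proper subword of the other. This contradicts Lemma~\ref{lem26011801}(e), exactly as in Claim~\ref{claim1}. Hence every word of $\bv$ has length at least $2$, and after additive idempotence $\bv\in\Theta_n$.

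The third step is to conclude. Since $\bv\in\Theta_n$, Proposition~\ref{pro02} applies, provided the inequality $\bq_n\preceq\bv$ is nontrivial. That proposition gives that $\bu_{m,1}$ is $\bv$-free, the desired contradiction. Nontriviality follows as in Proposition~\ref{pro02}: otherwise $\bq_n$ would be a summand of $\bv$, whereas the inequality was assumed nontrivial. The hypothesis that the inequality is not in $\Omega_1$ is used only to exclude $\bv=\bu_{n,1}$. I expect the main obstacle to be the bookkeeping in the first step. The words $\bw_i$ range over $\{x_1,\dots,x_{n+1}\}\setminus\{x_i\}$, and one must check carefully that any repetition produces a square subterm of $\bu_{m,1}$. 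The subtle point is the case where $\bp$ is nonempty: the square is then a multiplicative factor inside $\bp\varphi(\cdot)$, and we need it to be a subterm in the sense of the preliminaries. In $P_f(X_c^+)$ this holds because $\leq$ is compatible with multiplication there.
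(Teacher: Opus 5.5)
\textbf{There is a genuine gap at the step ``after additive idempotence $\bv\in\Theta_n$''.} Your reductions to $\ell=0$, $k=n+1$, to linear words, and to words of length at least $2$ agree with the paper. The step fails because in \eqref{005} only the $\bw_i$ are confined to $\{x_1,\dots,x_{n+1}\}$. The words $\bp_{ij}$ are unrestricted and may contain variables outside this set. This is the central case, not a corner case: the paper's own representation of $\bu_{n,1}$ in $\mathscr U_{n,n+1}$ uses $\bp_{i,n+1}=y_i$. For such $\bv$, Proposition~\ref{pro02} does not apply.

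\emph{Symptoms.} Your argument never actually uses the hypothesis that the inequality lies outside $\Omega_1$. Run on $\bv=\bu_{m,1}$, it would conclude that $\bu_{m,1}$ is $\bu_{m,1}$-free. For the same reason, the ``verbatim'' transfer of Claim~\ref{claimqj01} to all words of $\bv$ fails. Its Case~1 uses that each variable of one word occurs together with each variable of the other word in some word of $\bv$, and this is false once outside variables are present. That is why the paper states Claim~\ref{claim01} only for the words of $\sum_i x_i\bw_i$.

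\emph{The case where your idea works.} Even when all $\bp_{ij}$ do lie in $\{x_1,\dots,x_{n+1}\}$, $\bv$ itself need not be in $\Theta_n$, because of the extra words $x_i\bw_i$. The paper instead passes to the additive subterm $\bv_0=\sum_{i<j}x_ix_j\bp_{ij}\in\Theta_n$ and combines Proposition~\ref{pro02} with Lemma~\ref{lem26012701}. That is its Case~1, and there your approach goes through.

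\textbf{What is missing is the paper's Case~2,} where some $\bp_{ij}$ contains a variable outside $\{x_1,\dots,x_{n+1}\}$. There the paper argues directly. Claims~\ref{claim01}--\ref{claim04} show that $\bp$ is empty. Claims~\ref{claim06} and~\ref{claim07} show that $\sum_i x_i\bw_i$ has exactly one word of length at least $3$. The remaining shape is then reduced either to a square or to Proposition~\ref{pro01}.

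\textbf{In this case the conclusion genuinely depends on $n<m$.} Take $n=m\geq 4$, let $y'$ be a new variable, and set $\bv=\bu_{m,1}+x_2x_3y'$.
\begin{itemize}
\item $\bv$ lies in $\mathscr U_{m,m+1}$: use the paper's representation of $\bu_{m,1}$, but with $\bp_{23}=y'$.
\item $\bv$ is the upper side of a nontrivial instance of \eqref{005} that is not in $\Omega_1$.
\item The substitution $y'\mapsto x_1x_4\cdots x_m$, fixing all other variables, maps $\bv$ onto $\bu_{m,1}$.
\end{itemize}
So $\bu_{m,1}$ is not $\bv$-free, and two distinct words of $\bv$ ($x_2x_3y'$ and $x_1\cdots x_m$) have the same image. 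The statement as written therefore fails at $n=m$, and any correct argument for Case~2 must exploit $n<m$. That weaker statement is all the proof of Theorem~\ref{thm02} actually requires, provided $m$ is chosen larger than every index occurring in the finite set $\Sigma'$.
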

\begin{proof}
If $\ell\geq1$ or $k\leq n$, then $\bv$ contains a word
involving a square. Hence Lemma~\ref{lem26011801}(d) implies
that $\bu_{m,1}$ is $\bv$-free.
We may therefore assume that $\ell=0$ and $k=n+1$.
Then $\mathscr{U}_{n, k}$ denote the set of all terms of the form
\[
\sum_{i=1}^{n+1}x_i\bw_i+\sum_{1\leq i<j\leq n+1}x_ix_j\bp_{ij}.
\]
In this case , we may assume that $x_i\bw_i$ and $x_ix_j\bp_{ij}$ are linear.
If one of the words $x_i\bw_i$ or $x_ix_j\bp_{ij}$ is
nonlinear, then it contains a square of some variable.
Lemma~\ref{lem26011801}(d) then immediately implies that
$\bu_{m,1}$ is $\bv$-free. Hence we may assume that all
words occurring in $\bv$ are linear.

\textbf{Case 1.} Suppose that $c(\bp_{ij})\subseteq\{x_1,\ldots,x_{n+1}\}$ for all $1\leq i<j\leq n+1$.
Then $\bv$ contains an additive subterm $\bv_0\in\Theta_n$ such that $\bq_n\preceq\bv_0$ is nontrivial.
By Proposition~\ref{pro02}, $\bu_{m,1}$ is $\bv_0$-free, and hence it is also $\bv$-free.

\textbf{Case 2.} There exist $1\leq i<j\leq n+1$ such that $c(\bp_{ij})\not\subseteq \{x_1, x_2, \dots, x_{n+1}\}$.
This implies that $c(\bp_{i'j'})\not\subseteq \{x_1, x_2, \dots, x_{n+1}\}$ for some $1\leq i'<j'\leq n+1$,
and so $x_{i'}x_{j'}\bp_{i'j'}\not\in D_{\bq_n}(\bv)$.

Now suppose for contradiction that $\bu_{m, 1}$ is not $\bv$-free for some $m\geq 4$, $n\geq2$ and $\bv\in \mathscr{U}_{n, k}$.
Then there exist terms $\bp, \br\in P_f(X_c^+)$ and a substitution $\varphi$ such that
\begin{equation}\label{26082301}
\bp\varphi(\bv)+\br=\bu_{m, 1},
\end{equation}
where $\bp$ may be the empty word, and $\br$ may be the empty set.
Combining this with the explicit form of $\mathscr{U}_{n, k}$ and identity \eqref{26082301},
we can obtain $\bw_i$ is not the empty word for any $1\leq i\leq n+1$.
So we may write
\[
\sum_{i=1}^{n+1}x_i\bw_i=x_{i_1}x_{j_1}\bw_{i_1j_1}+\cdots+x_{i_r}x_{j_r}\bw_{i_rj_r},
\]
where $\bw_{i_tj_t}$ may be the empty word for all $1\leq t\leq r$.
Since $x_ix_j\bp_{ij}$ occurs in $\bv$ for every $i<j$,
we have
\[
\{x_ix_j\mid 1\leq i<j\leq n+1\}\subseteq S_2(\bv).
\]
Together with $c(\bp_{i'j'})\not\subseteq \{x_1, x_2, \dots, x_{n+1}\}$, it follows that $x_{i'}x_{j'}\bp_{i'j'}\not\in D_{\bq_n}(\bv)$.

\begin{claim}\label{claim01}
For distinct words $x_{i_1}x_{j_1}\bw_{i_1j_1}$ and $x_{i_2}x_{j_2}\bw_{i_2j_2}$ in $\sum_{i=1}^{n+1}x_i\bw_i$,
\[
\bp\varphi(x_{i_1}x_{j_1}\bw_{i_1j_1})\neq\bp\varphi(x_{i_2}x_{j_2}\bw_{i_2j_2}).
\]
\end{claim}
\begin{proof}[Proof of Claim $\ref{claim01}$.]
The argument is identical to the proof of Claim~\ref{claimqj01}.
%By the equality~\eqref{26082301} and Lemma~\ref{lem26011801}(e),
%the sets $c(x_{i_1}x_{j_1}\bw_{i_1j_1})$ and $c(x_{i_2}x_{j_2}\bw_{i_2j_2})$ are incomparable.
%Suppose for contradiction that
%\[
%\bp\varphi(x_{i_1}x_{j_1}\bw_{i_1j_1})=\bp\varphi(x_{i_2}x_{j_2}\bw_{i_2j_2}).
%\]
%Then, by Lemma~\ref{lem26011801}(l) and the cancellation property of $P_f(X^+)$ (see the preliminaries),
%we obtain
%\begin{equation}\label{26082302}
%\varphi(x_{i_1}x_{j_1}\bw_{i_1j_1})=\varphi(x_{i_2}x_{j_2}\bw_{i_2j_2}).
%\end{equation}
%
%\textbf{Case 1.} $c(x_{i_1}x_{j_1}\bw_{i_1j_1})\cap c(x_{i_2}x_{j_2}\bw_{i_2j_2})$ is empty.
%Take a variable $y$ in $\varphi(x_{i_1})$. By \eqref{26012301}, $y$ must appear in
%$\varphi(x_{i_2})$, $\varphi(x_{j_2})$, or $\varphi(x_s)$ for some $x_s\leq \bw_{i_2j_2}$.
%Consequently, $y^2$ is a subterm of
%$\varphi(x_{i_1}x_{i_2})$, $\varphi(x_{i_1}x_{j_2})$, or $\varphi(x_{i_1}x_s)$.
%Therefore, $y^2\leq \bu_{m, 1}$, which contradicts Lemma~\ref{lem26011801}(d).
%
%\textbf{Case 2.} $c(x_{i_1}x_{j_1}\bw_{i_1j_1})\cap c(x_{i_2}x_{j_2}\bw_{i_2j_2})$ is nonempty.
%Without loss of generality, we may assume that $x_{i_1}=x_{i_2}$.
%By \eqref{26012301}, it follows that
%\[
%\varphi(x_{j_1}\bw_{i_1j_1})=\varphi(x_{j_2}\bw_{i_2j_2}).
%\]
%
%If $c(x_{j_1}\bw_{i_1j_1})\cap c(x_{j_2}\bw_{i_2j_2})$ is empty,
%we are back to Case~1 and obtain a contradiction.
%If the intersection remains nonempty,
%we iterate the same step: after finitely many iterations
%we must arrive at two words with disjoint contents whose $\varphi$-images coincide, which again reduces to Case~1.
\end{proof}

\begin{claim}\label{claim02}
The term $\bp$ is a word.
\end{claim}
\begin{proof}[Proof of Claim $\ref{claim02}$.]
Suppose that $\bp_1$ and $\bp_2$ are distinct words in $\bp$.
By \eqref{26082301}, both $\bp_1\varphi(x_1x_2\bw_{12})$ and $\bp_2\varphi(x_1x_2\bw_{12})$ are contained in $\bu_{m, 1}$.
Consequently, $\bu_{m,1}$ would contain two distinct words sharing a common subword of length $2$,
which contradicts Lemma~\ref{lem26011801}(c).
\end{proof}

\begin{claim}\label{claim03}
$\bp$ is either empty or a single variable.
\end{claim}
\begin{proof}[Proof of Claim $\ref{claim03}$.]
By Claim~\ref{claim02}, we know that $\bp$ is a word.
If $\ell(\bp)\geq 2$, then by \eqref{26082301} and Lemma~\ref{lem26011801}(g),
for distinct words $x_{i_1}x_{j_1}\bw_{i_1j_1}$ and $x_{i_2}x_{j_2}\bw_{i_2j_2}$ in $\sum_{i=1}^{n+1}x_i\bw_i$,
we have
\[
\bp\varphi(x_{i_1}x_{j_1}\bw_{i_1j_1})=\bp\varphi(x_{i_2}x_{j_2}\bw_{i_2j_2})=x_1x_2\cdots x_m.
\]
This contradicts Claim~\ref{claim01}.
Thus the length of $\bp$ is at most $1$;
therefore $\bp$ is either empty or a single variable.
\end{proof}

\begin{claim}\label{claim04}
The word $\bp$ is empty.
\end{claim}
\begin{proof}[Proof of Claim $\ref{claim04}$.]
Suppose for contradiction that $\bp$ is nonempty.
By Claim~\ref{claim03}, $\bp$ is then a single variable.
For distinct words $x_{i_1}x_{j_1}\bw_{i_1j_1}$ and $x_{i_2}x_{j_2}\bw_{i_1j_2}$ in $\sum_{i=1}^{n+1}x_i\bw_i$,
Claim~\ref{claim01} tells us that
\[
\bp\varphi(x_{i_1}x_{j_1}\bw_{i_1j_1})\neq\bp\varphi(x_{i_2}x_{j_2}\bw_{i_1j_2}).
\]
By Lemma~\ref{lem26011801}(c), $\bp\varphi(x_{i_1}x_{j_1}\bw_{i_1j_1})$ and $\bp\varphi(x_{i_2}x_{j_2}\bw_{i_1j_2})$
do not share a common subword of length $2$,
and so the intersection $c(x_{i_1}x_{j_1}\bw_{i_1j_1})$ and $c(x_{i_2}x_{j_2}\bw_{i_1j_2})$ is empty.

Since
\[
\bp\varphi(x_{i_1}x_{j_1}\bw_{i_1j_1})\neq\bp\varphi(x_{i_2}x_{j_2}\bw_{i_1j_2}),
\]
it follows that $x_qy_qx_{m+1}\in \bp\varphi(x_{i_1}x_{j_1}\bw_{i_1j_1})$ or $\bp\varphi(x_{i_2}x_{j_2}\bw_{i_1j_2})$ for some $2\leq q\leq m$.
We may assume that $x_qy_qx_{m+1}\in \bp\varphi(x_{i_1}x_{j_1}\bw_{i_1j_1})$.
By Lemma~\ref{lem26011801}(k), we have that $\bp\varphi(x_{i_1}x_{j_1}\bw_{i_1j_1})=x_qy_qx_{m+1}$, and so $\bw_{i_1j_1}$ is empty.

If $\bp\varphi(x_{i_2}x_{j_2}\bw_{i_2j_2})=x_1x_2\cdots x_m$, then $\bp=x_q$,
and so $x_ry_q$ is a subterm of $\varphi(x_{i_1}x_{i_2})$ or $\varphi(x_{j_1}x_{i_2})$ for some $2\leq r\leq m(r\neq q)$.
This implies that $x_ry_q$ is a subterm of $\bu_{m, 1}$, contradiction.

If $\bp\varphi(x_{i_2}x_{j_2}\bw_{i_2j_2})=x_py_px_{m+1}$ for some $2\leq p\leq m(p\neq q)$, then $\bp=x_{m+1}$,
and so $y_qy_p$ is a subterm of $\varphi(x_{i_1}x_{i_2})$ or $\varphi(x_{i_1}x_{j_2})$ or $\varphi(x_{j_1}x_{i_2})$ or $\varphi(x_{j_1}x_{j_2})$.
This implies that $y_qy_p$ is a subterm of $\bu_{m, 1}$, contradiction.

Thus $\bp$ is empty.
\end{proof}

By Claim~\ref{claim04}, the equality~\eqref{26082301} reduces to the form
\begin{equation}\label{26082401}
\varphi(\bv)+\br=\bu_{m, 1},
\end{equation}
where $\br$ may be the empty set.

%\begin{claim}\label{claim05}
%The term $\sum_{i=1}^{n+1}x_i\bw_i$ contains at most one word of length at least $4$.
%\end{claim}
%\begin{proof}[Proof of Claim $\ref{claim05}$.]
%$x_{i_1}x_{j_1}\bw_{i_1j_1}$ and $x_{i_2}x_{j_2}\bw_{i_2j_2}$
%are distinct words in $\sum_{i=1}^{n+1}x_i\bw_i$ with length at least $4$.
%Then both $\bw_{i_1j_1}$ and $\bw_{i_2j_2}$ are nonempty.
%By identity \eqref{26082401}, we have
%\[
%\varphi(x_{i_1}x_{j_1}\bw_{i_1j_1})=\varphi(x_{i_2}x_{j_2}\bw_{i_2j_2})=x_1x_2\cdots x_m.
%\]
%This contradicts Claim~\ref{claim01}.
%\end{proof}

\begin{claim}\label{claim06}
The term $\sum_{i=1}^{n+1}x_i\bw_i$ contains at most one word of length at least $3$.
\end{claim}
\begin{proof}[Proof of Claim $\ref{claim06}$.]
$x_{i_1}x_{j_1}\bw_{i_1j_1}$ and $x_{i_2}x_{j_2}\bw_{i_2j_2}$
are distinct words in $\sum_{i=1}^{n+1}x_i\bw_i$ with length at least $3$.
Then both $\bw_{i_1j_1}$ and $\bw_{i_2j_2}$ are nonempty.
By Claim~\ref{claim01}, we have
\[
\varphi(x_{i_1}x_{j_1}\bw_{i_1j_1})\neq\varphi(x_{i_2}x_{j_2}\bw_{i_2j_2}).
\]
Together with identity \eqref{26082401}, we may assume that
\[
\varphi(x_{i_1}x_{j_1}\bw_{i_1j_1})=x_py_px_{m+1}, \quad\varphi(x_{i_2}x_{j_2}\bw_{i_2j_2})=x_1\cdots x_m
\]
or
\[
\varphi(x_{i_1}x_{j_1}\bw_{i_1j_1})=x_py_px_{m+1}, \quad\varphi(x_{i_2}x_{j_2}\bw_{i_2j_2})=x_qy_qx_{m+1},
\]
where $2\leq p, q\leq m$ and $p\neq q$.
Since $\{x_ix_j\mid 1\leq i<j\leq n+1\}\subseteq S_2(\bv)$,
we have that $x_ry_p$ $(r\neq p)$ or $y_qy_p$ is a subterm of $\bu_{m, 1}$, a contradiction.
\end{proof}

\begin{claim}\label{claim07}
The term $\sum_{i=1}^{n+1}x_i\bw_i$ contains exactly one word of length at least $3$.
\end{claim}
\begin{proof}[Proof of Claim $\ref{claim07}$.]
By Claim~\ref{claim06}, $\sum_{i=1}^{n+1}x_i\bw_i$ contains at most one word of length at least $3$.
Suppose, to the contrary, that $\sum_{i=1}^{n+1}x_i\bw_i$ contains no word of length at least $3$.
Since each $\bw_i$ is nonempty, every word occurring in $\sum_{i=1}^{n+1}x_i\bw_i$ has length exactly $2$.
We distinguish the following possibilities according to the words $x_ix_j\bp_{ij}$.

\textbf{Case 1.}
All the words $x_ix_j\bp_{ij}$ have length $2$.
Then $\bp_{ij}=\varepsilon$ for all $1\leq i<j\leq n+1$, and consequently
\[
\varphi(x_1)\varphi(x_2)+\varphi(x_1)\varphi(x_3)+\varphi(x_2)\varphi(x_3)\subseteq \bu_{m,1},
\]
which contradicts Lemma~\ref{lem26011801}(j).

\textbf{Case 2.}
Some word $x_ix_j\bp_{ij}$ has length at least $3$.
Then
\[
x_1x_2+x_1x_3+x_2x_3y_2\subseteq \bv,
\]
together with Proposition~\ref{pro01}, $\mathbf{u}_{m, 1}$ is $\mathbf{u}_{2, 1}$-free for every $m \geq 4$,
and so $\mathbf{u}_{m, 1}$ is $\mathbf{v}$-free;
or
\[
x_1x_2+x_3x_4\subseteq \bv.
\]
%We shall show that $\varphi(x_1x_3)$ and $\varphi(x_1x_4)$ cannot be simultaneously equal to additive subterms of $\bu_{m,1}$.
%Suppose that it is not true.
By identity \eqref{26082401}, we have
\[
\varphi(x_1x_2), \varphi(x_3x_4)\subseteq \bu_{m,1},
\]
and so $\varphi(x_1x_2)$ and $\varphi(x_3x_4)$ are distinct additive subterms of $\bu_{m,1}$.
Since $c(\varphi(x_1x_2))$ and $c(\varphi(x_3x_4))$ have nonempty intersection,
we have that there is $y\in c(\bu_{m,1})$ such that $y^2\leq\varphi(x_1x_3)$ or $\varphi(x_1x_4)$,
and so $y^2\leq\bu_{m,1}$, a contradiction.
%Hence $\varphi(x_1)\varphi(x_3+x_4)$ is a subterm of $\bu_{m,1}$.
%This implies that $\varphi(x_1)=x_1$ or $x_{m+1}$ or $x_i(i\geq2)$.
%If $\varphi(x_1)=x_1$, then $\varphi(x_3+x_4)=x_{m+1}+x_2\cdots x_m$,
%and so $\varphi(x_3x_4)=x_{m+1}x_2\cdots x_m\leq\bu_{m,1}$, a contradicton.
%If $\varphi(x_1)=x_{m+1}$, then $\varphi(x_3+x_4)=x_1+\sum_{i=2}^\ell x_iy_i$ or $\sum_{i=2}^\ell x_iy_i$ for some $2\leq\ell\leq m$,
%and so $\sum_{i=2}^\ell x_iy_i\leq\varphi(x_3x_4)\leq\bu_{m,1}$, a contradicton.
%If $\varphi(x_1)=x_i(i\geq2)$, then $\varphi(x_3+x_4)=y_ix_{m+1}+x_1\cdots x_{i-1}x_{i+1}\cdots x_m$,
%and so $\varphi(x_3x_4)=y_ix_{m+1}x_1\cdots x_{i-1}x_{i+1}\cdots x_m\leq\bu_{m,1}$, a contradicton.????
Thus $\sum_{i=1}^{n+1}x_i\bw_i$ contains exactly one word of length at least $3$.
\end{proof}

Now we complete the main proof of the proposition.
By Claim~\ref{claim07}, it follows that $\sum_{i=1}^{n+1}x_i\bw_i$ contains exactly one word of length at least $3$.
Let
\[
\bv_1=\sum_{i=1}^{n+1}x_i\bw_i.
\]
By Claim~\ref{claim07}, $\bv_1$ contains a unique word of
length at least $3$. After renaming the variables, we may
assume that this word is $x_1x_2\cdots x_k$.
Since $\bq_n\preceq\bv$ is nontrivial, we have $k\leq n$.
Moreover,
\[
c(\bv_1)=\{x_1,\ldots,x_{n+1}\}.
\]
Hence some word of length $2$ in $\bv_1$ contains a variable
$x_r$ with $r>k$. We may write this word as $x_jx_r$.
Thus either $j\leq k$, or both $j,r>k$.

%Combined with $c(D_{\bq_n}(\bv))=c(\sum_{i=1}^{n+1}x_i\bw_i)$, we may write
%\[
%\sum_{i=1}^{n+1}x_i\bw_i=\prod_{i=1}^k x_i+x_jx_r+\cdots,
%\]
%where $3\leq k\leq n$, $1\leq j\leq k$ and $k+1\leq r\leq n+1$; or
%\[
%\sum_{i=1}^{n+1}x_i\bw_i=\prod_{i=1}^k x_i+x_jx_r+\cdots,
%\]
%where $3\leq k\leq n$ and $k+1\leq j,r\leq n+1$.
If $j,r>k$ (in $\sum_{i=1}^{n+1}x_i\bw_i$, the intersection of the capacities of two terms is empty),
then by the proof of Claim~\ref{claim07}, there is $y\in c(\bu_{m,1})$ such that $y^2\leq\bu_{m,1}$, a contradiction.
If $j\leq k$ (the capacity intersections of any two terms in $\sum_{i=1}^{n+1}x_i\bw_i$ are nonempty),
by Proposition~\ref{pro01} $\mathbf{u}_{m, 1}$ is $\mathbf{u}_{k, \ell}$-free for every $m \geq 4$ and $\ell\geq1$,
and so $\mathbf{u}_{m, 1}$ is $\mathbf{v}$-free.
Therefore, $\bu_{m, 1}$ is $\bv$-free for every $m\geq 4$, $n\geq2$ and $\bv\in \mathscr{U}_{n, k}$.
This completes the proof.
\end{proof}

\begin{pro}\label{pro545}
$\mathsf V(\mathcal{P}^+(S_7, \cdot)) = \mathsf V(S_{53}, S_{(4, 450)})$.
\end{pro}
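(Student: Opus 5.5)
The plan is to realize $\mathcal{P}^+(S_7,\cdot)$ as a subdirect product of $S_{53}$ and $S_{(4,450)}$, in exact parallel with the argument used for $\mathcal{P}(S_7,\cdot)$ in Proposition~\ref{pro63401}. I would restrict the two congruences used there to the subsemiring $\mathcal{P}^+(S_7,\cdot)=\{1,\dots,7\}$, i.e.\ delete the element $8$ (the empty set).

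The first congruence on $\mathcal{P}(S_7,\cdot)$ has nontrivial blocks $\{1,2,3,5\}$ and $\{4,7\}$. Restricted to $\{1,\dots,7\}$ it has the classes $\{1,2,3,5\}$, $\{4,7\}$ and $\{6\}$, so the quotient is a three-element ai-semiring. In the full power semiring, the quotient by this congruence was $S_{53}^0$, and the extra class $\{8\}$ played the role of the adjoined zero $4$. Removing $8$ therefore leaves exactly $S_{53}$ (elements $1,2,3$ of Table~\ref{tb634}). I would confirm this by writing down the map
\[
\{1,2,3,5\}\mapsto 1,\qquad \{4,7\}\mapsto 2,\qquad \{6\}\mapsto 3,
\]
and checking against Tables~\ref{tbPS7plus} and~\ref{tb634} that it is a homomorphism. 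For example, $6$ is the multiplicative identity and the additive top element of $\mathcal{P}^+(S_7)$, which matches element $3$ of $S_{53}$. Likewise $4\cdot4=1$ and $4\cdot 7=3$, both landing in class $1$, which matches $2\cdot2=1$ in $S_{53}$.

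The second congruence has blocks $\{1,4\},\{2,6\},\{3,7\},\{5,8\}$. Restricted to $\{1,\dots,7\}$, the block $\{5,8\}$ becomes $\{5\}$, and the quotient is still all of $S_{(4,450)}$. I need to check that the restriction is still a congruence, which is automatic since $\mathcal{P}^+(S_7)$ is a subalgebra. I also need all four classes to survive, which they do because each block meets $\{1,\dots,7\}$.

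Next I need the two restricted congruences to intersect trivially. The first one separates $6$ from everything else, and $\{4,7\}$ from $\{1,2,3,5\}$. The second then separates the remaining elements: within $\{1,2,3,5\}$ all four lie in different second-classes, and $4,7$ also lie in different second-classes. So the intersection is the identity relation.

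With these facts, $\mathcal{P}^+(S_7,\cdot)$ embeds subdirectly into $S_{53}\times S_{(4,450)}$, so $\mathsf V(\mathcal{P}^+(S_7,\cdot))\subseteq \mathsf V(S_{53},S_{(4,450)})$. Both quotients are homomorphic images of $\mathcal{P}^+(S_7,\cdot)$, which gives the reverse inclusion, and equality follows.

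The only step needing real care is verifying that the first quotient is genuinely $S_{53}$ and not some other three-element semiring. I would do this by checking every product and sum of representatives against Table~\ref{tb634}, in the same way one checks the block structure in Proposition~\ref{pro63401}.
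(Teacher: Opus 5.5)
Your proof is correct. It differs from the paper's only in the choice of the first congruence, and that choice removes the detour the paper takes.

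\textbf{How the two routes differ.} The paper uses the finer congruence whose only nontrivial block is $\{1,2,3,5\}$. It identifies the four-element quotient with $S_{(4,545)}$, invokes $\mathsf V(S_{(4,545)})=\mathsf V(S_{53},D_2)$ from \cite{yrg}, and then absorbs $D_2$ into $S_{(4,450)}$. You also merge $4$ and $7$, so the quotient is already $S_{53}$. The key observation is that $4$ and $7$ are still separated by the second congruence (they lie in $\{1,4\}$ and $\{3,7\}$), so the intersection stays trivial. I checked your three-class partition against Table~\ref{tbPS7plus}: it is a congruence with quotient $S_{53}$, and the second restricted congruence does map onto $S_{(4,450)}$.

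\textbf{What your route buys.} It is shorter and self-contained, needing no external result about $S_{(4,545)}$. It also avoids a problematic step in the paper's argument. In the quotient by the congruence whose sole nontrivial block is $\{1,2,3,5\}$, we have $6+7=4$ with the classes of $6$ and $7$ incomparable, so its additive reduct is not a chain. The additive reduct of $S_{(4,545)}$ in Table~\ref{tb545} is a chain. In fact, $S_{(4,545)}$ as tabulated is not a homomorphic image of $\mathcal{P}^+(S_7,\cdot)$ at all:
\begin{itemize}
\item a surjection must send the identity $6$ to $3$ and the zero $5$ to $1$;
\item this forces $1,2,3\mapsto 1$, since each of these equals itself plus $5$;
\item so only $4$ or $7$ could map onto the idempotent $4$, which is impossible because $4\cdot 4=1$ and $7\cdot 7=5$.
\end{itemize}

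\textbf{A small slip.} One remark in your illustrative check is wrong: $6$ is not the additive top of $\mathcal{P}^+(S_7,\cdot)$; that is $1$, and $6$ is merely minimal. What is true, and all you need, is that the class $\{6\}$ is the additive identity of the three-element quotient. This matches element $3$ of $S_{53}$, which is both the multiplicative identity and the additive identity there.
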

\begin{proof}
It is a routine matter to verify that  $\mathcal{P}^+(S_7, \cdot)$ is isomorphic to a subdirect product $S_{(4, 545)}$ (its Cayley tables are given in Table~\ref{tb545})
and $S_{(4, 450)}$ via the congruences defined by the nontrivial blocks $\{1, 2, 3, 5\}$ and $\{\{1, 4\}, \{2, 6\}, \{3, 7\}\}$.
So $\mathsf V(\mathcal{P}^+(S_7, \cdot)) = \mathsf V(S_{(4, 545)}, S_{(4, 450)})$.
Combined with~\cite[Proposition 3.4]{yrg}, we have that $\mathsf V(S_{(4, 545)}) = \mathsf V(S_{53}, D_2)$, and so
 $\mathsf V(\mathcal{P}^+(S_7, \cdot)) = \mathsf V(S_{53}, D_2, S_{(4, 450)})$.
 Since $D_2$ can be embedded into $S_{(4, 450)}$, we therefore have
\[
\mathsf V(\mathcal{P}^+(S_7, \cdot)) = \mathsf V(S_{53}, S_{(4, 450)}).
\]
This completes the proof.
\end{proof}

By~\cite[Corollary~3.8]{yrg}, the variety $\mathsf{V}(S_{(4, 545)})$ contains $S_7$; consequently, we have the following remark.
\begin{remark}
$\mathsf V(\mathcal{P}^+(S_7, \cdot))$ contains $S_7$.
\end{remark}

\begin{table}[ht]
\caption{The Cayley tables of $S_{(4, 545)}$} \label{tb545}
\begin{tabular}{c|cccc}
$+$ & $1$ & $2$ & $3$ & $4$\\
\hline
$1$ & $1$ & $1$ & $1$ & $1$\\
$2$ & $1$ & $2$ & $2$ & $2$\\
$3$ & $1$ & $2$ & $3$ & $3$\\
$4$ & $1$ & $2$ & $3$ & $4$
\end{tabular}
\qquad
\begin{tabular}{c|cccc}
$\cdot$ & $1$ & $2$ & $3$ & $4$\\
\hline
$1$ & $1$ & $1$ & $1$ & $1$\\
$2$ & $1$ & $1$ & $2$ & $2$\\
$3$ & $1$ & $2$ & $3$ & $4$\\
$4$ & $1$ & $2$ & $4$ & $4$
\end{tabular}
\end{table}

We now present an infinite equational basis for $\mathcal{P}^+(S_7, \cdot)$.
\begin{pro}\label{pro54501}
$\mathsf{V}(\mathcal{P}^+(S_7, \cdot))$
is the commutative ai-semiring variety defined by the identities
\begin{align}
&x^3 \approx x^2; \label{001}\\
&x_1^2x_2^2\cdots x_{n+1}^2 \preceq \sum_{i=1}^{n+1}x_i^2\bw_i\quad (n\geq1);\label{003}\\
&x_1^2x_2^2\cdots x_{n+1}^2 \preceq \sum_{i=1}^{n+1}x_i\bw_i+\sum_{i=1}^{n+1}x_i^2\bp_i\quad (n\geq1); \label{004}\\
&\bq_{n,k} \preceq \sum_{i=1}^{n+1}x_i\bw_i+\sum_{i=1}^{k-1}x_i^2\bp_i+\sum_{i=k+1}^{n+1}x_i^2\bp_i \label{006}\quad (n\geq1, 1\leq k\leq n+1);\\
&\bq_{n,k} \preceq \sum_{i=1}^{n+1}x_i\bw_i+\sum_{i=1}^{\ell}x_i^2\bp_i
+\sum_{\ell+1\leq i<j\leq k}x_ix_j\bp_{ij}+\sum_{i=k+1}^{n+1}x_i^2\bp_i,\label{005}
\end{align}
where,
\[
\bq_{n,k}
=
x_1x_2\cdots x_kx_{k+1}^2\cdots x_{n+1}^2.
\]
In \eqref{003} and \eqref{004}, all words $\bw_i$ that occur satisfy
\[
c(\bw_i)\subseteq\{x_1,\ldots,x_{n+1}\}
\]
and may be empty.
In \eqref{006} and \eqref{005},
\[
c(\bq_{n,k})=c\left(\sum_{i=1}^{n+1}x_i\bw_i\right);
\]
all words $\bw_i$ satisfy
\[
c(\bw_i)
\subseteq
\{x_1,\ldots,x_{n+1}\}\setminus\{x_i\}
\quad (1\leq i\leq k),
\]
and
\[
c(\bw_i)
\subseteq
\{x_1,\ldots,x_{n+1}\}
\quad (k+1\leq i\leq n+1)
\]
and may be empty.
In \eqref{003}--\eqref{005}, all words $\bp_i$ and $\bp_{ij}$ may be empty.
In \eqref{005} $n\geq 1$, $1\leq k\leq n+1$ and $0\leq \ell< k-1$.
\end{pro}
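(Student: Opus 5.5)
Our plan is to mirror the proof of Proposition~\ref{pro63401}, using Proposition~\ref{pro545}: $\mathsf V(\mathcal{P}^+(S_7,\cdot))=\mathsf V(S_{53},S_{(4,450)})$. An inequality $\bq\preceq\bu$ therefore holds in $\mathcal{P}^+(S_7,\cdot)$ if and only if it holds in both $S_{53}$ and $S_{(4,450)}$. We can test these conditions by Lemma~\ref{lem5301} (no zero extension this time, so $\bu$ itself replaces $D_\bq(\bu)$) and Lemma~\ref{lem45001}.

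\textbf{Soundness.} We first check that \eqref{001}--\eqref{005} hold in both factors. For $S_{53}$ we apply Lemma~\ref{lem5301}.
\begin{itemize}
\item Every upper side has a word of length at least $2$.
\item $c(\bq_{n,k})\subseteq c(\bu)$ holds by the content condition.
\item Every length-$2$ subword of $\bq_{n,k}$ is either $x_i^2$ with $i>k$, witnessed by $x_i^2\bp_i$, or $x_ix_j$, witnessed by a subword of the upper side whose content lies in $\{x_i,x_j\}$. For $i,j\le k$ this is $x_i^2\bp_i$ when $i\le\ell$ or $i<k$ in \eqref{006}, and it is $x_ix_j\bp_{ij}$ otherwise.
\end{itemize}
For $S_{(4,450)}$ we apply Lemma~\ref{lem45001}.
\begin{itemize}
\item The summands $x_i\bw_i$ have content inside $c(\bq)$ and cover it, so $c(\bq)=c(D_\bq(\bu))$.
\item Each $x_i$ with $i\le k$ has multiplicity one in $x_i\bw_i$, since $x_i\notin c(\bw_i)$.
\item In \eqref{003}, \eqref{004} we have $M_1(\bq)=\emptyset$.
\end{itemize}
In \eqref{003}--\eqref{004} the $\bp_i$ may contain outside variables; this is harmless because only $D_\bq(\bu)$ matters for $S_{(4,450)}$, and Lemma~\ref{lem5301} only asks for some witness subword.

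\textbf{Completeness.} Take a nontrivial $\bq\preceq\bu$ valid in $\mathcal{P}^+(S_7,\cdot)$ and reduce $\bq$ by \eqref{001} to $\bq_{n,k}$ or to $x_1^2\cdots x_{n+1}^2$. The two lemmas give four facts.
\begin{itemize}
\item $c(\bq)=c(D_\bq(\bu))$.
\item $c(\bq)\subseteq c(\bu)$.
\item Every $\bv\in S_2(\bq)$ has a witness $\bv'\in S_2(\bu)$ with $c(\bv')\subseteq c(\bv)$.
\item Each $x\in M_1(\bq)$ has a summand $\bu_x\in D_\bq(\bu)$ with $m(x,\bu_x)=1$.
\end{itemize}
The witness summands of the third fact may now lie outside $D_\bq(\bu)$. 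This is exactly why the $\bp_i$, $\bp_{ij}$ in \eqref{003}--\eqref{005} are unrestricted, whereas the $\bw_i$ are confined to $c(\bq)$.

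\textbf{Case $M_1(\bq)=\emptyset$.} Each $x_i^2$ is witnessed by a summand $x_i^2\bp_i$ of $\bu$. If all witnesses lie in $D_\bq(\bu)$, we use \eqref{003}. Otherwise we add the summands of $D_\bq(\bu)$, written as $x_i\bw_i$ covering $c(\bq)$, and use \eqref{004}.

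\textbf{Case $M_1(\bq)\neq\emptyset$.} We first pick summands $x_i\bw_i\in D_\bq(\bu)$ for $i\le k$ with $x_i\notin c(\bw_i)$. For $i>k$ we pick any summand of $D_\bq(\bu)$ containing $x_i$, written $x_i\bw_i$; this is why \eqref{006}, \eqref{005} allow $x_i\in c(\bw_i)$ when $i>k$. For $i>k$ we also take witnesses $x_i^2\bp_i$. We then split exactly as in Subcases~2.1 and~2.2 of Proposition~\ref{pro63401}.
\begin{itemize}
\item If at most one index $i\le k$ lacks an $x_i^2$-witness, we rename so that it is $k$ and apply \eqref{006}.
\item Otherwise, let $x_1,\dots,x_\ell$ ($\ell<k-1$) be the indices with square witnesses. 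Each pair $\ell<i<j\le k$ then has an $x_ix_j$-witness $x_ix_j\bp_{ij}$, and we apply \eqref{005}.
\end{itemize}
In each case the chosen summands form an additive subterm of $\bu$, so $\bu\succeq$ (upper side) $\succeq\bq$.

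\textbf{Expected obstacle.} The delicate step is the bookkeeping in the completeness direction. Summands chosen for different roles may coincide; additive idempotence absorbs this, but it must be checked that the renaming in the subcases keeps the shape required by \eqref{006}/\eqref{005}. We also need to confirm that the content condition $c(\bq_{n,k})=c(\sum x_i\bw_i)$ can always be met from $D_\bq(\bu)$. The soundness check of \eqref{004} in $S_{53}$ is routine by comparison.
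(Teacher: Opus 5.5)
Your proposal is correct and follows the paper's proof essentially step for step: soundness comes from Proposition~\ref{pro545} together with Lemmas~\ref{lem5301} and~\ref{lem45001}, and completeness reduces $\bq$ by \eqref{001} and then splits into the case $M_1(\bq)=\emptyset$ (handled by \eqref{003}/\eqref{004}) and the case $M_1(\bq)\neq\emptyset$, with the same Subcases~2.1/2.2 leading to \eqref{006} and \eqref{005}. One small loose end, which the paper shares, is the case $\bq\approx x^2$ (that is, $|c(\bq)|=1$): it is not literally an instance of \eqref{003}/\eqref{004} because these require $n\geq1$, but it follows from \eqref{004} with $n=1$ by substituting $x_2\mapsto x_1$ and using $x^4\approx x^2$.
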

\begin{proof}
It is a routine matter to verify that $\mathcal{P}^+(S_7, \cdot)$ satisfies the identities \eqref{001}.
By Proposition~\ref{pro545} together with Lemmas~\ref{lem5301}, and \ref{lem45001},
$\mathcal{P}^+(S_7, \cdot)$ satisfies the inequalities in \eqref{003}--\eqref{005}.
It suffices to show that every inequality holding in $\mathcal{P}^+(S_7, \cdot)$ is derivable from \eqref{001}--\eqref{005}.
Consider such a nontrivial inequality $\bq\preceq \bu$, where
$\bu=\bu_1+\bu_2+\cdots+\bu_t$ and $\bu_i, \bq \in X_c^+$, $1 \leq i \leq t$.
By Lemmas \ref{lem5301}, and \ref{lem45001},
$L_{\geq2}(\bu)\neq \emptyset$, $c(\bq)=c(D_\bq(\bu))$, $\ell(\bq)\geq2$,
and for any $\bv\in S_2(\bq)$, there exists $\bv'\in S_2(\bu)$ such that $c(\bv')\subseteq c(\bv)$.

\textbf{Case 1.} $M_1(\bq)=\emptyset$. Applying identity \eqref{001}, we have
\[
\mathbf{q}=x_1^2x_2^2\cdots x_{n+1}^2.
\]
For any $\mathbf{v}\in S_2(\mathbf{q})$, there exists $\mathbf{v}'\in S_2(\bu)$ such that $c(\mathbf{v}')\subseteq c(\mathbf{v})$.
Hence, for each $1\leq i\leq n+1$, there is a summand $\bu_i\in \bu$ such that $x_i^2\in S_2(\bu_i)$.
Since $c(\bq)=c(D_{\bq}(\bu))$, for each $1\leq i\leq n+1$, there exist summands $x_i\bp_i\in D_{\bq}(\bu)$ such that
\[
c(\bq)=c\left(\sum_{i=1}^{n+1}x_i\bp_i\right).
\]
Therefore,
\begin{align*}
\bu
&\succeq \bu_1+\cdots+\bu_{n+1}+\left(\sum_{i=1}^{n+1}x_i\bp_i\right)\\
&\approx \left(\sum_{i=1}^{n+1}x_i^2\bu'_i\right)+\left(\sum_{i=1}^{n+1}x_i\bp_i\right) &&(\text{by}~\eqref{001})\\
&\succeq x_1^2x_2^2\cdots x_{n+1}^2=\bq, &&(\text{by}~\eqref{003}, \eqref{004})
\end{align*}
where $\bu'_i$, $\bp_i$ may be empty for every $1\leq i \leq n+1$,
which derives the inequality $\bu \succeq \bq$.

\textbf{Case 2.} $M_1(\bq)\neq\emptyset$. Applying identity \eqref{001}, we may write
\[
\bq=x_1x_2\cdots x_k x_{k+1}^2x_{k+2}^2\cdots x_{n+1}^2
\]
for some $1\leq k\leq n+1$.
By Lemma~\ref{lem45001}, for each $x\in M_1(\bq)$, there exists $\bu_x\in D_{\bq}(\bu)$ such that $m(x,\bu_x)=1$.
In particular, for each $1\leq i\leq k$, choosing $x=x_i$ yields a summand $\bu_i\in D_{\bq}(\bu)$ with $m(x_i,\bu_i)=1$.
For every $k+1\leq i\leq n+1$, since $x_i^2\in S_2(\bq)$, there exists a summand of $\bu$ containing $x_i^2$.
Using identity \eqref{001}, we may write this summand as $x_i^2\bp_i$ with $x_i\notin c(\bp_i)$.

\textbf{Subcase 2.1.} For any $1\leq i<j\leq k$,
there is $\bv_{m} \in S_2(\bu)$ such that $\bv_{m}=x_i^2$ or $\bv_{m}=x_j^2$.
One can deduce that $\{i\mid 1\leq i\leq k, x_i^2\in S_2(\bu)\}$
contains at least $k-1$ elements.
We may assume that it contains $1, 2, \ldots, k-1$.
Then $x_1^2\bp_1, x_2^2\bp_2, \dots, x_{k-1}^2\bp_{k-1}\in \bu$ for some $\bp_1, \bp_2, \ldots, \bp_{k-1} \in X^*$.
Since for each $x\in M_1(\bq)$, there exists $\bu_x\in D_{\bq}(\bu)$ such that $m(x,\bu_x)=1$,
it follows that there exists $x_1\bw_1,\dots, x_k\bw_k\in D_{\bq}(\bu)$ for some $\bw_1,\dots,\bw_k\in X^*$ and $x_i\notin c(\bw_i)$.
Combined with $c(\bq)=c(D_{\bq}(\bu))$, for each $k+1\leq i\leq n+1$, there exist summands $x_i\bw_i\in D_\bq(\bu)$ such that
\[
c(\bq)=c\left(\sum_{i=1}^{k}x_i\bw_i+\sum_{i=k+1}^{n+1}x_i\bw_i\right).
\]
Consequently,
\begin{align*}
\bu
&\succeq \left(\sum_{i=1}^{k}x_i\bw_i+\sum_{i=k+1}^{n+1}x_i\bw_i\right)+\left(\sum_{i=1}^{k-1}x_i^2\bp_i\right)+\left(\sum_{i=k+1}^{n+1}x_i^2\bp_i\right)\\
&\succeq x_1\cdots x_kx_{k+1}^2\cdots x_{n+1}^2, &&(\text{by}~\eqref{006})
\end{align*}
where $\bw_i, \bp_i, \bp'_i$ may be empty, $1\leq i \leq n+1$.
This yields $\bu \succeq \bq$.

\textbf{Subcase 2.2.} There exist $1\leq i<j\leq k$ such that
for every $\bv\in S_2(\bu)$, $\bv\neq x_{i}^2$ and $\bv\neq x_{j}^2$.
Consequently, $x_{i}x_{j} \in S_2(\bu)$.
Hence $x_{i}x_{j}\bp_{ij}\in \bu$ for some $\bp_{ij}\in X^*$.

For convenience, assume that for some integer $\ell$ with $0\leq \ell<k-1$,
$x_i^2\in S_2(\bu)$ for every $1\leq i\leq \ell$, and $x_ix_j\in S_2(\bu)$ for every $\ell+1\leq i<j\leq k$.
Then $x_i^2\bp_i\in \bu$ for some $\bp_i\in X^*$, $1\leq i\leq \ell$;
$x_ix_j\bp_{ij}\in \bu$ for some $\bp_{ij}\in X^*$, $\ell+1\leq i<j\leq k$.

By Lemma~\ref{lem45001}, for each $1\leq j\leq k$, there is a summand $\bu_j\in D_{\bq}(\bu)$ such that $m(x_j,\bu_j)=1$,
and $c(\bq)=c(D_{\bq}(\bu))$.
Furthermore, there exist $x_i\bw_i\in D_{\bq}(\bu)$ for all $1\leq i\leq k$ with $x_i\notin c(\bw_i)$,
and there exist $x_i\bw_i\in D_{\bq}(\bu)$ for all $k+1\leq i\leq n+1$ such that
\[
c(\bq)=c(\sum_{i=1}^{k}x_i\bw_i+\sum_{i=k+1}^{n+1}x_i\bw_i).
\]
Therefore,
\begin{align*}
\bu
&\succeq \sum_{i=1}^{k}x_i\bw_i+\sum_{i=k+1}^{n+1}x_i\bw_i+\sum_{i=1}^{\ell}x_i^2\bp_i
+\sum_{\ell+1\leq i<j\leq k}x_ix_j\bp_{ij}+\sum_{i=k+1}^{n+1}x_i^2\bp_i\\
&\succeq x_1\cdots x_kx_{k+1}^2\cdots x_{n+1}^2=\bq, &&(\text{by}~\eqref{05})
\end{align*}
where $\bw_i$, $\bp_i$, $\bp_{ij}$ may be empty for each $1\leq i \leq n+1$.
This yields $\bu \succeq \bq$.
\end{proof}

For $n\geq2$ and each admissible $k$, let
\[
\Upsilon_{n,k}
=
\bigl\{
\bq_{n,k}\preceq\bv
\mid
\bv\in\mathscr{U}_{n,k}
\bigr\}.
\]

\begin{thm}
The ai-semiring $\mathcal{P}^+(S_7, \cdot)$ is nonfinitely based.
\end{thm}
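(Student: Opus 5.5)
The plan mirrors the proof for $\mathcal{P}(S_7,\cdot)$, with Theorem~\ref{thm02} in place of Theorem~\ref{thmNFB}. By Proposition~\ref{pro54501}, the identities \eqref{001}--\eqref{005} form an equational basis of $\mathsf V(\mathcal{P}^+(S_7,\cdot))$.

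\textbf{Step 1: the basis contains $\Omega_1$.} We first check that every $\sigma_{n,1}$ with $n\geq 2$ is an instance of \eqref{005}. Take $k=n+1$, so that $\bq_{n,n+1}=\bq_n$, and take $\ell=0$. The discussion preceding Proposition~\ref{inequality005} shows that $\bu_{n,1}\in\mathscr U_{n,n+1}$. We also need the content condition $c(\bq_n)=c\bigl(\sum_i x_i\bw_i\bigr)$ and the restrictions $c(\bw_i)\subseteq\{x_1,\dots,x_{n+1}\}\setminus\{x_i\}$. These hold for the choice of $\bw_i$ made there, since those $\bw_i$ involve only the $x_r$ and no $y_i$. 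Hence the basis $\Sigma$ consisting of \eqref{001}--\eqref{005} contains the infinite set $\Omega_1$.

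\textbf{Step 2: freeness for every other inequality.} By Theorem~\ref{thm02}, it then suffices to show that for every $m\geq4$ the term $\bu_{m,1}$ is $\bt$-free for each upper side $\bt$ of an inequality in $\Sigma\setminus\Omega_1$. Trivial inequalities $\bq\preceq\bq+\cdots$ may be discarded, since they are consequences of the ai-semiring axioms. We split by family.
\begin{itemize}
\item[(i)] For \eqref{001}, both sides contain the square $x^2$.
\item[(ii)] For \eqref{003}, every summand $x_i^2\bw_i$ contains a square.
\item[(iii)] For \eqref{004}, the upper side contains $x_i^2\bp_i$.
\item[(iv)] For \eqref{006}, the upper side contains a square term $x_i^2\bp_i$ whenever $k\geq 2$ or $k\leq n$. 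Since $n\geq 1$ forces $n+1\geq 2$, at least one such term is always present.
\end{itemize}
In cases (i)--(iv), $\bt$ has a subterm that is the square of a variable, namely $x^2$ or $x_i^2$. Any image $\bp\varphi(\bt)$ then has as a subterm the square $\varphi(x_i)^2$ of a term. By Lemma~\ref{lem26011801}(d) applied to $\bu_{m,1}$, no such image is a subterm, so $\bu_{m,1}$ is $\bt$-free.

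The remaining family is \eqref{005}, where $\bt\in\mathscr U_{n,k}$. When $n\geq 2$ and the inequality is nontrivial and not in $\Omega_1$, Proposition~\ref{inequality005} gives exactly the required freeness. For $n=1$, the term $\bt$ contains a square unless $\ell=0$ and $k=2$. In that case $\bt=x_1\bw_1+x_2\bw_2+x_1x_2\bp_{12}$, and we must argue directly. Since $c(\bw_1)\subseteq\{x_2\}$, the summand $x_1\bw_1$ is either $x_1$ or $x_1x_2$. If it is $x_1x_2$, the inequality is trivial. If it is $x_1$ and $\bw_2$ is nonempty, then $x_1\preceq\bt$ together with $x_2x_1\preceq\bt$ gives, after applying $\varphi$, a proper subword relation inside $\bu_{m,1}$, contradicting Lemma~\ref{lem26011801}(e). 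The same contradiction arises whenever $x_1$ and a word containing $x_1$ both appear, which covers the case where $\bw_2$ is empty as well.

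\textbf{Step 3: conclusion.} With these freeness facts established, Theorem~\ref{thm02}, which rests on Lemma~\ref{lem02} and compactness (Lemma~\ref{el26012701}), yields that no finite subset of $\Sigma$ defines $\mathsf V(\mathcal P^+(S_7,\cdot))$. Hence $\mathcal{P}^+(S_7,\cdot)$ is nonfinitely based.

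\textbf{Main obstacle.} I expect the hardest part to be the bookkeeping in Step 2. One must confirm that every upper side outside $\Omega_1$ either contains a square or falls under Proposition~\ref{inequality005}, paying particular attention to the degenerate small cases $n=1$ and $k=n+1$. One must also confirm that the $\sigma_{n,1}$ genuinely appear among the instances of \eqref{005} under the stated content conditions.
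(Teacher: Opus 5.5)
Your proposal is correct and follows the paper's proof. Proposition~\ref{pro54501} supplies a basis containing $\Omega_1$; the upper sides of \eqref{001}, \eqref{003}, \eqref{004} and \eqref{006} all contain a square, so Lemma~\ref{lem26011801}(d) gives freeness; Proposition~\ref{inequality005} handles the remaining instances of \eqref{005}; and Theorem~\ref{thm02} concludes. Your explicit discarding of trivial instances and your separate treatment of the $n=1$ instances of \eqref{005} via Lemma~\ref{lem26011801}(e) fill in points the paper's proof passes over (in the $n=1$ case you should also allow $\bw_1=x_2^j$ with $j\geq 2$, but such a term contains a square and is already covered).
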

\begin{proof}
By Proposition~\ref{pro54501},
\[
\Xi=\{\eqref{001}, \eqref{003}, \eqref{004}, \eqref{006}\} \cup\{ \eqref{005}\}
\]
is an equational basis for the commutative ai-semiring $\mathcal{P}^+(S_7, \cdot)$, which contains $\Omega_1$.
By Theorem~\ref{thm02}, to show that $\mathcal{P}^+(S_7, \cdot)$ is nonfinitely based,
it suffices to prove that for any $m\geq 4$, $\bu_{m, 1}$ is $\bw$-free for every term $\bw$ in the sets
\[
\Gamma=\{\bt \mid \bt ~\text{is an upper side of an inequality in}~\{\eqref{001}, \eqref{003}, \eqref{004}, \eqref{006}\}\}
 \]
 and
 \[
\{\bv \mid \bv\in \mathscr{U}_{n, k}\setminus \{\bu_{n,1}\}, n\geq2\}.
\]
Indeed, the required freeness follows from Lemma~\ref{lem26011801}(d) for $\mathbf{w} \in\Gamma$
and from Proposition~\ref{inequality005} for $\mathbf{t}\in \{\bv \mid \bv\in \mathscr{U}_{n, k}\setminus \{\bu_{n,1}\}, n\geq2\}$.
\end{proof}

\section{The interval $[\mathsf{V}(\mathcal{P}^{+}(S_7,\cdot)), \mathsf{V}(\mathcal{P}(S_7,\cdot))]$}
In this section, we prove that the interval
$[\mathsf{V}(\mathcal{P}^{+}(S_7,\cdot)), \mathsf{V}(\mathcal{P}(S_7,\cdot))]$,
which consists of all subvarieties of $\mathsf{V}(\mathcal{P}(S_7,\cdot))$ that include $\mathsf{V}(\mathcal{P}^{+}(S_7,\cdot))$,
contains $2^{\aleph_0}$ distinct varieties.
To better understand the interval,
we first provide an equational characterization of its lower end $\mathsf{V}(\mathcal{P}^{+}(S_7,\cdot))$ in the upper end $\mathsf{V}(\mathcal{P}(S_7,\cdot))$.

\begin{pro}\label{lemqj}
$\mathsf{V}(\mathcal{P}^{+}(S_7,\cdot))$ is the subvariety of $\mathsf{V}(\mathcal{P}(S_7,\cdot))$ defined by the inequalities
\eqref{004}, \eqref{006} and \eqref{005}.
\end{pro}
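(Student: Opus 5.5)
Let $\mathcal{W}$ denote the subvariety of $\mathsf{V}(\mathcal{P}(S_7,\cdot))$ defined by \eqref{004}, \eqref{006} and \eqref{005}. We prove $\mathcal{W}=\mathsf{V}(\mathcal{P}^{+}(S_7,\cdot))$ by showing both inclusions. By Proposition~\ref{pro63401}, $\mathcal{W}$ is the commutative ai-semiring variety defined by \eqref{01}--\eqref{05} together with \eqref{004}, \eqref{006}, \eqref{005}. By Proposition~\ref{pro54501}, $\mathsf{V}(\mathcal{P}^{+}(S_7,\cdot))$ is defined by \eqref{001}--\eqref{005}.

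\emph{Inclusion $\mathsf{V}(\mathcal{P}^{+}(S_7,\cdot))\subseteq\mathcal{W}$.} First, $\mathcal{P}^{+}(S_7,\cdot)$ is a subsemiring of $\mathcal{P}(S_7,\cdot)$, so it lies in $\mathsf{V}(\mathcal{P}(S_7,\cdot))$. Second, it satisfies \eqref{004}, \eqref{006}, \eqref{005} by Proposition~\ref{pro54501}. Hence it lies in $\mathcal{W}$.

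\emph{Inclusion $\mathcal{W}\subseteq\mathsf{V}(\mathcal{P}^{+}(S_7,\cdot))$.} It suffices to check that every identity in the basis of Proposition~\ref{pro54501} is derivable in $\mathcal{W}$.
\begin{itemize}
\item Identity \eqref{001} is \eqref{01}.
\item Inequality \eqref{003} is literally \eqref{03}.
\item Inequalities \eqref{004}, \eqref{006}, \eqref{005} are among the defining inequalities of $\mathcal{W}$.
\end{itemize}
So the argument is essentially syntactic: the basis of $\mathcal{P}^{+}(S_7,\cdot)$ is contained in the basis of $\mathcal{W}$.

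\emph{Main obstacle.} The delicate point is confirming that the side conditions in Proposition~\ref{pro54501} match exactly the versions of \eqref{004}, \eqref{006}, \eqref{005} we use to define $\mathcal{W}$. These are the conditions $c(\bq_{n,k})=c(\sum x_i\bw_i)$ and the relaxed content conditions on $\bw_i$ for $i>k$. We take the inequalities with precisely those conditions, so there is nothing extra to verify. A short remark is still worthwhile: the new inequalities are not consequences of \eqref{01}--\eqref{05}. For instance, \eqref{004} with $\bw_i$ empty gives $x_1^2x_2^2\preceq x_1+x_2+x_1^2+x_2^2$, which fails in $\mathcal{P}(S_7,\cdot)$ because $S_{53}^0$ violates it (take the $D_\bq$ condition with one variable mapped to $0$). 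This shows the interval is nontrivial, although the stated equality does not require it.
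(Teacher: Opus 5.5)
Your proof is correct and follows the paper's own argument, which simply compares the equational bases in Propositions~\ref{pro63401} and~\ref{pro54501}. One small imprecision: \eqref{003} is not literally \eqref{03}, since it allows $x_i\in c(\bw_i)$, but those extra instances reduce to instances of \eqref{03} by \eqref{01}.

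Your closing aside is wrong, however. The inequality $x_1^2x_2^2\preceq x_1+x_2+x_1^2+x_2^2$ follows from the instance $x_1^2x_2^2\preceq x_1^2+x_2^2$ of \eqref{03}, because adding summands to the upper side preserves an inequality. So it holds in $\mathcal{P}(S_7,\cdot)$. Also, sending a variable of $\bq$ to $0$ turns the lower side into the least element, so such an assignment can never witness a failure.

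A genuine witness needs a variable outside $c(\bq)$, which is exactly what $D_{\bq}$ in Lemma~\ref{lem001} detects. For instance, take $x_1^2x_2^2\preceq x_1+x_2+x_1^2y+x_2^2y$, an instance of \eqref{004} with $\bp_1=\bp_2=y$. It fails in $\mathcal{P}(S_7,\cdot)$ at $x_1=x_2=\{a\}$, $y=\emptyset$: the lower side is $\{0\}$ and the upper side is $\{a\}$. It does hold in $\mathcal{P}^{+}(S_7,\cdot)$.
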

\begin{proof}
Its proof follows from the equational bases of $\mathsf{V}(\mathcal{P}^{+}(S_7,\cdot))$ and $\mathsf{V}(\mathcal{P}(S_7,\cdot))$.
\end{proof}

Let $\mathbb{N}_{\geq 4}$ denote the set of all positive integers at least $4$.
For each subset $M$ of $\mathbb{N}_{\geq 4}$,
let $\mathcal{V}_M$ denote the subvariety of $\mathsf{V}(\mathcal{P}(S_7,\cdot))$
defined by the identities $\sigma_{m, 1}$ with $m\in M$.
Since every inequality of the form $\sigma_{m, 1}$ holds in $\mathsf{V}(\mathcal{P}^{+}(S_7,\cdot))$,
each $\mathcal{V}_M$ lies in the interval $[\mathsf{V}(\mathcal{P}^{+}(S_7,\cdot)), \mathsf{V}(\mathcal{P}(S_7,\cdot))]$.

\begin{thm}\label{thmqj}
The interval $[\mathsf{V}(\mathcal{P}^{+}(S_7,\cdot)), \mathsf{V}(\mathcal{P}(S_7,\cdot))]$ contains $2^{\aleph_0}$ distinct varieties.
\end{thm}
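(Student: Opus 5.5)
The plan is to show that the assignment $M\mapsto\mathcal{V}_M$ is injective on the subsets of $\mathbb{N}_{\geq 4}$. Since there are $2^{\aleph_0}$ such subsets and each $\mathcal{V}_M$ already lies in the interval, this gives the claim. Concretely, it suffices to prove that for every $M\subseteq\mathbb{N}_{\geq4}$ and every $m\in\mathbb{N}_{\geq4}\setminus M$, the inequality $\sigma_{m,1}$ fails in $\mathcal{V}_M$. Indeed, if $M\neq M'$, say $m\in M'\setminus M$, then $\sigma_{m,1}$ holds in $\mathcal{V}_{M'}$ but not in $\mathcal{V}_M$, so $\mathcal{V}_M\neq\mathcal{V}_{M'}$.

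To show $\sigma_{m,1}$ is not derivable, I would argue syntactically, exactly as in the proof of Theorem~\ref{thm02}. The variety $\mathcal{V}_M$ is defined by $\Xi_M=\{\eqref{01},\eqref{03},\eqref{04},\eqref{05}\}\cup\{\sigma_{n,1}\mid n\in M\}$. Suppose $\sigma_{m,1}$, i.e. $\bq_m\preceq\bu_{m,1}$ (equivalently $\bu_{m,1}\approx\bu_{m,1}+\bq_m$), were derivable from $\Xi_M$. By Lemma~\ref{lem02}, some derivation starting from $\bu_{m,1}$ would have a first step that changes the term. That step requires a subterm of $\bu_{m,1}$ of the form $\varphi(\bs)$, where $\bs$ is a side of some identity in $\Xi_M$. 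I would write each basis identity in its identity form $\bt\approx\bt+\bs$, with \eqref{01} treated separately. Since $\varphi(\bt)\le\varphi(\bt+\bs)$, every side contains $\varphi(\bt)$ as a subterm. So it is enough to show that $\bu_{m,1}$ is $\bt$-free for every upper side $\bt$ occurring in $\Xi_M$, and also free with respect to $x^2$ and $x^3$. The freeness must hold at every stage of the derivation, so I would set things up as an invariant: no step can be applied to $\bu_{m,1}$, hence the derivation is trivial and $\bq_m\notin\bu_{m,1}$ gives a contradiction. This is the same argument Theorem~\ref{thm02} uses, so I can cite its mechanism directly.

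The freeness checks then go as follows.
\begin{enumerate}
\item For \eqref{01}, \eqref{03} and \eqref{04}, every upper side contains a square, or is itself a square in the case of \eqref{01}. Lemma~\ref{lem26011801}(d) therefore gives freeness, and Lemma~\ref{lem26012701} transfers it to the whole upper side.
\item In \eqref{05}, any upper side with $\ell\ge1$ or $k\le n$ again contains a square and is handled by item 1. For $\ell=0$ and $k=n+1$, the upper side contains the subterm $\sum_{i<j}x_ix_j\bw_{ij}$, which lies in $\Theta_n$ or contains a square. If that inequality is nontrivial, Proposition~\ref{pro02} gives that $\bu_{m,1}$ is free of it.
\item If the inequality from item 2 is trivial, its upper side contains $\bq_n$ as a summand. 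Since $\bq_n$ is a linear word of length $n+1$, freeness of $\bu_{m,1}$ must be argued separately: a word of length at least $4$ in $\bu_{m,1}$ can only be $x_1\cdots x_m$ by Lemma~\ref{lem26011801}(g), which constrains $\varphi$ heavily. Alternatively, trivial identities can simply be dropped from the basis.
\item For $\sigma_{n,1}$ with $n\in M$, Proposition~\ref{pro01} gives that $\bu_{m,1}$ is $\bu_{n,1}$-free whenever $n\neq m$. This is the only place where $m\notin M$ is used.
\end{enumerate}

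The main obstacle is the middle family: checking that the upper sides of \eqref{05} with $\ell=0$ and $k=n+1$, which carry arbitrary extra factors $\bw_{ij}$, are all covered by Proposition~\ref{pro02} or by the square argument. The upper side contains $\sum_{i<j} x_ix_j\bw_{ij}$, a term of $\Theta_n$ whenever all words are linear; otherwise a square appears. A subtle point is the case where that $\Theta_n$-subterm yields a trivial inequality. I would handle it by the length argument in item 3, or by observing that trivial members can be removed from the basis without changing $\mathcal{V}_M$.
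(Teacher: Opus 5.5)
Your proposal is correct and follows essentially the paper's argument: you prove injectivity of $M\mapsto\mathcal{V}_M$ by showing that $\bu_{m,1}$ is free of every upper side of the defining inequalities of $\mathcal{V}_M$, using Lemma~\ref{lem26011801}(d), Proposition~\ref{pro01} and Proposition~\ref{pro02}. One remark: discarding the trivial instances of \eqref{05} is genuinely necessary, because freeness actually fails for them. For example, the trivial instance whose upper side is $\bq_n$ itself, or $x_1x_2$ when $n=1$ (sent to $x_1x_{m+1}$), has an image under a suitable substitution that is a subterm of $\bu_{m,1}$. So the ``length argument'' alternative in your item~3 would fail, and only the dropping option works; the paper's own proof passes over this point silently.
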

\begin{proof}
For any two subsets $P$ and $Q$ of $\mathbb{N}_{\geq 4}$,
we claim that $\mathcal{V}_P=\mathcal{V}_Q$ if and only if $P=Q$.
Suppose that $P\neq Q$; without loss of generality let $p \in P \setminus Q$.
It is evident that $\mathcal{V}_P$ satisfies $\sigma_{p, 1}$.
To see that $\mathcal{V}_Q$ does not satisfy $\sigma_{p, 1}$, recall that $\mathcal{V}_Q$ is defined by
the identities \eqref{01}--\eqref{05} together with all $\sigma_{q,1}\;(q \in Q)$.
By Lemmas~\ref{lem26012701}, \ref{lem02} and \ref{lem26011801}
Propositions~\ref{pro63401}, \ref{pro01} and \ref{pro02},
the upper side of $\sigma_{p,1}$
is free for the upper side of each of the defining inequalities of $\mathcal{V}_Q$.
Consequently, $\sigma_{p,1}$ cannot hold in $\mathcal{V}_Q$.
Therefore, $\mathcal{V}_P \neq \mathcal{V}_Q$.

Thus $\{\mathcal{V}_N \mid N\subseteq \mathbb{N}_{\geq 4}\}$ has the same cardinality as the power set of $\mathbb{N}_{\geq 4}$, and so
the interval $[\mathsf{V}(\mathcal{P}^{+}(S_7,\cdot)), \mathsf{V}(\mathcal{P}(S_7,\cdot))]$ contains $2^{\aleph_0}$ distinct varieties.
\end{proof}

\begin{cor}
Let $M$ be an infinite subset of $\mathbb{N}_{\geq 4}$.
Then the variety $\mathcal{V}_M$ is nonfinitely based.
\end{cor}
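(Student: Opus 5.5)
The corollary will follow from Theorem~\ref{thm02}, once I have a suitable equational basis for $\mathcal{V}_M$. By Proposition~\ref{pro63401} and the definition of $\mathcal{V}_M$,
\[
\Sigma_M=\{\eqref{01},\eqref{03},\eqref{04},\eqref{05}\}\cup\{\sigma_{m,1}\mid m\in M\}
\]
is an equational basis for $\mathcal{V}_M$. It consists of commutative identities, so $\mathcal{V}_M$ is a variety of commutative ai-semirings. Since $M$ is infinite, $\Sigma_M$ contains the infinite subset $\{\sigma_{m,1}\mid m\in M\}$ of $\Omega_1$. Theorem~\ref{thm02} is stated for $\mathsf{V}(S)$, but its proof uses only compactness (Lemma~\ref{el26012701}) and Lemmas~\ref{lem26012701} and~\ref{lem02}, so it applies verbatim to any commutative ai-semiring variety with such a basis. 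I will state this explicitly, or replace $\mathcal{V}_M$ by its countably generated free algebra $S$, for which $\mathsf{V}(S)=\mathcal{V}_M$.

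The remaining hypothesis is that for every $m\geq4$, $\bu_{m,1}$ is $\bt$-free for every upper side $\bt$ of an inequality in $\Sigma_M\setminus\Omega_1$. I will check this family by family.

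For \eqref{01} and \eqref{03}, each upper side contains a square of a variable as a multiplicative subterm. For \eqref{04}, and for \eqref{05} when $\ell\geq1$ or $k\leq n$, some summand $x_i^2\bw_i$ or $x_i^2\bw_i'$ is present. In each of these cases Lemma~\ref{lem26011801}(d), combined with Lemma~\ref{lem26012701}, gives freeness: a substitution instance of a square is a square, and $\bu_{m,1}$ contains no square of a term.

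The remaining case is \eqref{05} with $\ell=0$ and $k=n+1$. Here the upper side has the form
\[
\sum_{i}x_i\bw_i+\sum_{i<j}x_ix_j\bw_{ij},
\]
with the $\bw$'s restricted to $\{x_1,\dots,x_{n+1}\}$. If some word in it is nonlinear, Lemma~\ref{lem26011801}(d) again applies. Otherwise it contains, as an additive subterm, a term $\bv_0\in\Theta_n$. If $\bq_n\preceq\bv_0$ is nontrivial, Proposition~\ref{pro02} gives that $\bu_{m,1}$ is $\bv_0$-free, hence $\bt$-free by Lemma~\ref{lem26012701}. If it is trivial, then $\bq_n$ itself is a summand, the inequality is trivial, and it can be dropped from the basis. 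For $n=1$ the terms are small, and I expect a direct argument using Lemma~\ref{lem26011801}(e) and (d).

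The main obstacle is matching the quantifier in Theorem~\ref{thm02}. Its proof reduces everything to $\bu_{m,1}$ being $\bu_{n,1}$-free for $n<m$, which Proposition~\ref{pro01} gives. But here the $\sigma_{n,1}$ in any finite subset have $n\in M$, and $m$ can be chosen in $M$ larger than all of them, so this is covered. I will spell out that step: choose $m\in M$ exceeding every index appearing in a given finite subset $\Sigma'\subseteq\Sigma_M$. Then every inequality of $\Sigma'$ has an upper side for which $\bu_{m,1}$ is free, so by Lemma~\ref{lem02} no derivation from $\Sigma'$ can start at $\bu_{m,1}$. Hence $\sigma_{m,1}$, which holds in $\mathcal{V}_M$, is not a consequence of $\Sigma'$.
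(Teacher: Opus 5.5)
Your proposal is correct and follows the paper's route: apply Theorem~\ref{thm02} to the basis $\{\eqref{01},\eqref{03},\eqref{04},\eqref{05}\}\cup\{\sigma_{m,1}\mid m\in M\}$, with freeness from Lemma~\ref{lem26011801}(d) for upper sides containing a square and from Proposition~\ref{pro02} for the linear instances of \eqref{05} with $\ell=0$, $k=n+1$. Two of your steps are more careful than the paper's one-line proof: using Lemma~\ref{lem26012701} to pass from the $\Theta_n$-subterm to the whole upper side, and discarding trivial instances. The second is exactly what handles $n=1$, since there $\bw_{12}=\varepsilon$ puts $x_1x_2=\bq_{1,2}$ on the upper side (e.g.\ $x_1x_2\preceq x_1x_2$); so the direct argument via (d) and (e) that you anticipate is neither needed nor available there.
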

\begin{proof}
By Proposition~\ref{pro63401}, the set
\[
\Sigma=\{\eqref{01}, \eqref{03}, \eqref{04}, \eqref{05}\}
\cup\{\sigma_{m,1} \mid m \in M\}
\]
is an equational basis for the commutative ai-semiring variety $\mathcal{V}_M$,
which contains an infinite subset of $\Omega_1$.
According to Theorem~\ref{thm02}, it suffices to prove that
$\bu_{m, 1}$ is $\bt$-free for every inequality $\bs \preceq \bt$
in $\Sigma \setminus \Omega_1$ and for every $m\geq 4$.
Observe that
\[
\Sigma \setminus \Omega_1=\{\eqref{01}, \eqref{03}, \eqref{04}, \eqref{05}\}.
\]
The required freeness follows from Lemma~\ref{lem26011801}(d) for $\mathbf{t} \in\Gamma$, where
\[
\Gamma=\{\bw \mid \bw ~\text{is an upper side of an inequality in}~\{\eqref{01}, \eqref{03}, \eqref{04}, \eqref{05}\setminus\delta_{n, \bv}\}
\]
and from Proposition~\ref{pro02} for $\mathbf{t}\in \{\bv \mid \bv\in \Theta_n, n\geq 2\}$.
\end{proof}

\end{document}